\numberwithin{equation}{section}
\theoremstyle{plain}
\newtheorem{thm}{\protect\theoremname}[section]
  \theoremstyle{definition}
  \newtheorem{defn}[thm]{\protect\definitionname}
  \theoremstyle{remark}
  \newtheorem{rem}[thm]{\protect\remarkname}
  \theoremstyle{plain}
  \newtheorem{prop}[thm]{\protect\propositionname}
  \theoremstyle{plain}
  \newtheorem{lem}[thm]{\protect\lemmaname}
\newenvironment{lyxcode}
{\par\begin{list}{}{
\setlength{\rightmargin}{\leftmargin}
\setlength{\listparindent}{0pt}
\raggedright
\setlength{\itemsep}{0pt}
\setlength{\parsep}{0pt}
\normalfont\ttfamily}%
 \item[]}
{\end{list}}
  \theoremstyle{definition}
  \newtheorem{condition}[thm]{\protect\conditionname}
  \theoremstyle{remark}
  \newtheorem*{rem*}{\protect\remarkname}
  \theoremstyle{plain}
  \newtheorem{assumption}[thm]{\protect\assumptionname}
  \providecommand{\assumptionname}{Assumption}
  \providecommand{\conditionname}{Condition}
  \providecommand{\definitionname}{Definition}
  \providecommand{\lemmaname}{Lemma}
  \providecommand{\propositionname}{Proposition}
  \providecommand{\remarkname}{Remark}
\providecommand{\theoremname}{Theorem}
\begin{document}

\newcommand{\bigslant}[2]{{\left.\raisebox{.2em}{$#1$}\middle/\raisebox{-.2em}{$#2$}\right.}}

\global\long\def\quotient#1#2{\bigslant{#1}{#2}}

\global\long\def\Span{\operatorname{Span}}

\global\long\def\vol{\operatorname{vol}}

\global\long\def\id{\operatorname{id}}
\global\long\def\Mat{\operatorname{Mat}}

\global\long\def\re{\operatorname{Re}}
\global\long\def\im{\operatorname{Im}}

\global\long\def\rank{\operatorname{rank}}
\global\long\def\img{\operatorname{im}}

\global\long\def\diam{\operatorname{diam}}
\global\long\def\dist{\operatorname{dist}}

\global\long\def\Length{\operatorname{Length}}
\global\long\def\Count{\operatorname{Count}}
\global\long\def\Clopen{\operatorname{Clopen}}

\global\long\def\var{\operatorname{var}}
\global\long\def\cov{\operatorname{cov}}
\global\long\def\median{\operatorname{Median}}

\global\long\def\td{\mathbb{T}^{d}}
\global\long\def\Td{\mathcal{T}^{d}}
\global\long\def\zd{\mathbb{Z}^{d}}
\global\long\def\rd{\mathbb{R}^{d}}
\global\long\def\rk{\mathbb{R}^{k}}
\global\long\def\rn{\mathbb{R}^{n}}
\global\long\def\rm{\mathbb{R}^{m}}

\global\long\def\R{\mathbb{R}}
\global\long\def\c{\mathbb{C}}
\global\long\def\z{\mathbb{Z}}
\global\long\def\q{\mathbb{Q}}
\global\long\def\n{\mathbb{N}}
\global\long\def\t{\mathbb{T}}

\global\long\def\p{\mathbb{P}}
\global\long\def\E{\mathbb{E}}

\global\long\def\Z{\mathcal{Z}}
\global\long\def\D{\mathcal{D}}
\global\long\def\N{\mathcal{N}}
\global\long\def\F{\mathbf{F}}

\global\long\def\e{\mathrm{e}}
\global\long\def\i{\mathrm{i}}

\global\long\def\Empty{\varnothing}

\global\long\def\epsilon{\varepsilon}
\global\long\def\tilde#1{\widetilde{#1}}
\global\long\def\ll{\lesssim}
\global\long\def\gg{\gtrsim}

\global\long\def\sums{\Lambda_{L}}
\global\long\def\sumsplus{\Lambda_{L}^{+}}
\global\long\def\sumsnorm{\widetilde{\Lambda}_{L}}

\global\long\def\hilbert{\mathcal{H}_{L}}

\global\long\def\sphere{\mathcal{S}^{d-1}}
\global\long\def\nsphere{\mathcal{S}^{n-1}}

\global\long\def\trigs{\mathcal{P}_{D}}
\global\long\def\laplacedom{\mathcal{D}_{\Delta}}

\global\long\def\matrices{\Mat_{n}\left(\R\right)}

\newcommandx\Int[4][usedefault, addprefix=\global, 1=, 2=]{\int_{#1}^{#2}#3\,\mathrm{d}#4}

\global\long\def\indic{\mathbbm{1}}

\title{\noindent \textbf{\Huge{}The Number of Nodal Components of Arithmetic
Random Waves}}

\author{Yoni Rozenshein\thanks{School of Mathematical Sciences, Tel Aviv University, Tel Aviv, Israel.
Email: \protect\href{mailto:yoni.vl@gmail.com}{yoni.vl@gmail.com}}}
\maketitle
\begin{abstract}
We study the number of nodal components (connected components of the
set of zeroes) of functions in the ensemble of arithmetic random waves,
that is, random eigenfunctions of the Laplacian on the flat $d$-dimensional
torus $\td$ ($d\ge2$). Let $f_{L}$ be a random solution to $\Delta f+4\pi^{2}L^{2}f=0$
on $\td$, where $L^{2}$ is a sum of $d$ squares of integers, and
let $N_{L}$ be the random number of nodal components of $f_{L}$.
By recent results of Nazarov and Sodin, $\E\left\{ N_{L}/L^{d}\right\} $
tends to a limit $\nu>0$, depending only on $d$, as $L\to\infty$
subject to a number-theoretic condition - the equidistribution on
the unit sphere of the normalized lattice points on the sphere of
radius $L$. This condition is guaranteed when $d\ge5$, but imposes
restrictions on the sequence of $L$ values when $2\le d\le4$. We
prove the exponential concentration of the random variables $N_{L}/L^{d}$
around their medians and means (unconditionally) and around their
limiting mean $\nu$ (under the condition that it exists).
\end{abstract}

\section{Introduction and presentation of the results}

\subsection{Toral eigenfunctions and arithmetic random waves}

Let $\hilbert\subset L^{2}\left(\td\right)$ be the real Hilbert space
of Laplacian eigenfunctions on the torus, i.e.\ functions $f\colon\td\to\R$
satisfying the partial differential equation:
\[
\Delta f+4\pi^{2}L^{2}f=0.
\]
We consider $d\ge2$ to be a fixed dimension (all ``constants''
mentioned below may depend on $d$); $L$ may vary. It is known that
the spectrum of eigenvalues is discrete; eigenfunctions exist whenever
$L^{2}$ can be expressed as a sum of $d$ squares of integers, and
then,
\[
\hilbert=\Span\left\{ \cos\left(2\pi\lambda\cdot x\right),\sin\left(2\pi\lambda\cdot x\right):\lambda\in\sums\right\} ,
\]
where $\sums=\left\{ \lambda\in\zd:\left|\lambda\right|=L\right\} $.
Each $\lambda$ generates the same functions as $-\lambda$, so $\dim\hilbert=\#\sums$.

For any $f\colon\td\to\R$, we denote by $Z\left(f\right)$ its \emph{nodal
set }(the subset of $\td$ where $f$ vanishes), and by $N\left(f\right)$
the number of its \emph{nodal components }(the connected components
of the nodal set). In this paper, we address the question: \emph{What
is the typical behavior of $N\left(f\right)$ for $f\in\hilbert$,
with fixed $d$ and large $L$?}

Typically (when $f$ and $\nabla f$ do not vanish simultaneously),
the number of nodal components almost equals the number of \emph{nodal
domains} (the connected components of $\td\setminus Z\left(f\right)$)
- they cannot differ by more than $d-1$. Thus, Courant's nodal domain
theorem gives a general upper bound $N\left(f\right)\ll L^{d}$, with
an explicit constant. Unfortunately, a general, non-trivial lower
bound cannot be obtained, as there are classical counterexamples with
arbitrarily large $L$ and only two nodal domains, originally shown
in \cite{Stern-thesis} (see also \cite{Berard-Helffer-Dirichlet-eigenfunctions},
\cite{Bruning-Fajman-On-the-nodal-count-for-flat-tori}).

It is expected, however, that such eigenfunctions with high eigenvalue
but few nodal components are outliers, and $N\left(f\right)$ is in
the order of magnitude of $L^{d}$ for ``most'' $f\in\hilbert$.
To study the typical case, we refer to a probabilistic model that
was introduced and investigated in \cite{Oravecz-Rudnick-Wigman-the-Leray-measure-of-nodal-sets,Rudnick-Wigman-the-volume-of-nodal-sets}.
Consider the random function $f_{L}\colon\td\to\R$:
\begin{equation}
f_{L}\left(x\right)\coloneqq\sqrt{\frac{2}{\dim\hilbert}}\sum_{\lambda\in\sumsplus}\left(a_{\lambda}\cos\left(2\pi\lambda\cdot x\right)+b_{\lambda}\sin\left(2\pi\lambda\cdot x\right)\right),\label{eq:fL}
\end{equation}
where the set $\sumsplus=\quotient{\sums}{\pm}$ is half of the set
$\sums$ (representatives of the equivalence $\lambda\sim\pm\lambda$),
and $a_{\lambda},b_{\lambda}$ are random variables, i.i.d.\ $\N\left(0,1\right)$.
The sequence of functions $\left\{ f_{L}\right\} $ is called the
\emph{ensemble of arithmetic random waves}. The random function $f_{L}$
may be viewed as a random element of the finite-dimensional space
$\hilbert$, or as a centered, stationary Gaussian process, normalized
such that $\E\left\{ \left|f_{L}\left(x\right)\right|^{2}\right\} =1$,
with covariance kernel:
\begin{equation}
K_{L}\left(x,y\right)\coloneqq\E\left\{ f_{L}\left(x\right)f_{L}\left(y\right)\right\} =\frac{1}{\dim\hilbert}\sum_{\lambda\in\sums}\cos\left(2\pi\lambda\cdot\left(x-y\right)\right).\label{eq:KL}
\end{equation}
Note that due to rotation invariance, the definition of $f_{L}$ does
not depend on the choice of basis for $\hilbert$.

Under this probabilistic model, the number of nodal components $N_{L}\coloneqq N\left(f_{L}\right)$
becomes a random variable (we discuss its measurability in detail
in Section \ref{sec:measurability}) and the question of its behavior
may be formulated in terms of expected value and concentration as
$L\to\infty$.

\subsection{Asymptotic law for \texorpdfstring{$\protect\E\left\{ N_{L}\right\} $}{E\{NL\}}}

Nazarov and Sodin \cite{Nazarov-Sodin-asymptotic-laws} (see also
lecture notes \cite{Sodin-SPB-Lecture-Notes}) proved, in a much more
general setting of ensembles of Gaussian functions on Riemannian manifolds,
an asymptotic law for the expected value of $N\left(f\right)$. Our
first theorem, Theorem \ref{thm:asymptotic-law}, is simply a formulation
of the Nazarov-Sodin theorem, applied to our case. There is one obstacle:
The theorem requires the existence of a limiting spectral measure
satisfying certain properties. In our case, this limiting spectral
measure does not necessarily exist, and it depends on the following
number-theoretic equidistribution condition:
\begin{defn}
\label{def:admissible-sequence}A sequence of values of $L$ that
tends to infinity, with $L^{2}$ always a sum of $d$ squares, is
called an \emph{admissible sequence of $L$ values} if the integer
points on the sphere of radius $L$, when projected onto the unit
sphere, become equidistributed as $L\to\infty$. In other words,
\begin{equation}
\frac{1}{\dim\hilbert}\sum_{\lambda\in\sums}\delta_{\lambda/L}\Rightarrow\sigma_{d-1},\label{eq:weak-convergence-of-measures}
\end{equation}
where ``$\Rightarrow$'' indicates weak-{*} convergence of measures,
and $\sigma_{d-1}$ is the uniform measure on the unit sphere, with
the normalization $\sigma_{d-1}\left(\sphere\right)=1$.
\end{defn}
This equidistribution condition depends on the dimension $d$. When
$d\ge5$, any sequence of $L$ values is admissible, whereas in the
low dimensions $2\le d\le4$, some conditions must be satisfied. For
more on the subject, see Appendix \ref{sec:appendix-equidistribution}.
\begin{thm}
\label{thm:asymptotic-law}There is a constant $\nu>0$ such that:
\[
\begin{array}{cc}
\E\left\{ N_{L}\right\} \sim\nu L^{d} & \text{ as }L\to\infty\text{ through any admissible sequence}.\end{array}
\]

\end{thm}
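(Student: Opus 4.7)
The plan is to reduce the statement to the general Nazarov--Sodin asymptotic law for the number of nodal components of stationary Gaussian fields on $\rd$, which requires (i)~rescaling $f_{L}$ to unit spectral scale and (ii)~verifying that the rescaled covariance kernels converge locally, together with derivatives, to the covariance of a non-degenerate translation-invariant Gaussian limit whose spectral measure is supported on $\sphere$.

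First I would introduce the rescaled field $F_{L}(y)\coloneqq f_{L}(y/L)$, a centered stationary Gaussian process defined on the enlarged torus $L\cdot\td$, which we regard as a growing subset of $\rd$. Its covariance kernel depends only on $y_{1}-y_{2}$ and equals
\[
\mathcal{K}_{L}(y)=\frac{1}{\#\sums}\sum_{\lambda\in\sums}\cos\!\bigl(2\pi(\lambda/L)\cdot y\bigr),
\]
i.e.\ the Fourier transform of the empirical spectral measure $\mu_{L}\coloneqq\frac{1}{\#\sums}\sum_{\lambda\in\sums}\delta_{\lambda/L}$ on $\sphere$. The admissibility hypothesis is precisely the weak-$*$ convergence $\mu_{L}\Rightarrow\sigma_{d-1}$, and since every $\mu_{L}$ is supported in the unit ball, this upgrades automatically to locally uniform convergence of $\mathcal{K}_{L}$ and of all its derivatives to
\[
\mathcal{K}_{\infty}(y)=\int_{\sphere}\cos(2\pi\theta\cdot y)\,d\sigma_{d-1}(\theta),
\]
the covariance of the isotropic ``Gaussian random wave'' $F_{\infty}$ on $\rd$.

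Next I would relate counts on the torus to counts in Euclidean cubes. Nodal components are invariant under the coordinate rescaling, so $N_{L}=N(F_{L}|_{L\cdot\td})$, and one can tile $L\cdot\td$ by $\approx L^{d}$ unit-scale fundamental cubes. The Nazarov--Sodin integral-geometric sandwich (see \cite{Nazarov-Sodin-asymptotic-laws,Sodin-SPB-Lecture-Notes}) estimates the expected number of components inside each cube by comparison with $F_{\infty}$, up to a boundary error of order $L^{d-1}$ coming from components that straddle cube faces. Assembling these estimates one obtains
\[
\E\{N_{L}\}=\nu\cdot\vol(L\cdot\td)+o(L^{d})=\nu L^{d}+o(L^{d}),
\]
where $\nu=\nu(\mathcal{K}_{\infty})>0$ depends only on the limiting spectral measure, hence only on $d$.

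The main obstacle is checking the non-degeneracy hypotheses under which Nazarov--Sodin's theorem produces a \emph{positive} limiting constant: the limit spectral measure must have no atoms and must not be supported on any proper affine hyperplane, and one needs a quantitative covering/isolation estimate to rule out typical non-compact or non-transverse zeros. All of these are immediate from the fact that $\sigma_{d-1}$ is the uniform measure on the full unit sphere, so the argument proceeds verbatim along the lines of the Nazarov--Sodin framework once the spectral convergence has been established. The only genuinely arithmetic ingredient is the admissibility assumption, which guarantees that the spectral limit exists in the first place.
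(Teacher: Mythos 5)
Your proposal is correct and follows essentially the same route as the paper: both reduce the statement to the Nazarov--Sodin asymptotic law by identifying the scaled covariance kernel $K_{x,L}$ as the cosine transform of the empirical spectral measure $\frac{1}{\#\sums}\sum_{\lambda\in\sums}\delta_{\lambda/L}$, using the admissibility hypothesis to obtain the translation-invariant local limit $k = \widehat{\sigma_{d-1}}$ with locally uniform convergence of derivatives, and then invoking the non-degeneracy of $\sigma_{d-1}$ to get a positive constant $\nu$. The only cosmetic differences are that the paper upgrades pointwise to compact convergence via Arzel\`a--Ascoli and cites the specific smoothness ($C^{3,3}$), non-degeneracy (constant multiple of identity for the limiting covariance matrix), and positivity (condition $(\rho 4)$) clauses of the lecture-note version of the theorem, whereas you sketch the integral-geometric sandwich lying inside the Nazarov--Sodin proof; both are legitimate phrasings of the same reduction.
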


\subsection{Main result: Exponential concentration of \texorpdfstring{$N_{L}$}{NL}}

Our main result is that $N_{L}$ concentrates around its median, mean,
and limiting mean, exponentially in $\dim\hilbert$, i.e.\ the number
of independent random variables. This is similar to a previous result
by Nazarov and Sodin in the case of random spherical harmonics \cite{Nazarov-Sodin-spherical-harmonics}.
\begin{thm}
\label{thm:exponential-concentration}Let $\varepsilon>0$. There
exist constants $C\left(\varepsilon\right),c\left(\varepsilon\right)>0$
such that:
\begin{enumerate}
\item For any $L$:
\[
\p\left\{ \left|\frac{N_{L}}{L^{d}}-\median\left\{ \frac{N_{L}}{L^{d}}\right\} \right|>\varepsilon\right\} \le C\left(\epsilon\right)\e^{-c\left(\epsilon\right)\dim\hilbert}.
\]

\item For any $L$:
\[
\p\left\{ \left|\frac{N_{L}}{L^{d}}-\E\left\{ \frac{N_{L}}{L^{d}}\right\} \right|>\varepsilon\right\} \le C\left(\epsilon\right)\e^{-c\left(\epsilon\right)\dim\hilbert}.
\]

\item If $\mathbb{E}\left\{ N_{L}L^{-d}\right\} $ tends to a limit $\nu$
through some sequence of $L$ values, then for any large enough $L$
in this sequence:
\[
\p\left\{ \left|\frac{N_{L}}{L^{d}}-\nu\right|>\varepsilon\right\} \le C\left(\varepsilon\right)\e^{-c\left(\varepsilon\right)\dim\hilbert}.
\]

\end{enumerate}
In all cases, our proof yields $c\left(\varepsilon\right)\gg\epsilon^{\left(d+2\right)^{2}-1}$.\end{thm}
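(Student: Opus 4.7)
The plan is to view $N_L$ as a measurable function on the Gaussian parameter space $\R^{\dim\hilbert}$ of coefficients $\xi=(a_\lambda,b_\lambda)_{\lambda\in\sumsplus}$ and to invoke the Tsirelson--Ibragimov--Sudakov Gaussian concentration inequality. The fundamental obstruction is that $\xi\mapsto N_L(f_\xi)$ is a discontinuous counting function: a small perturbation of the coefficients can merge two nodal components or create a tiny new one, so $N_L$ is not globally Lipschitz in $\xi$. The standard cure is a quantitative stability argument confined to a ``good event'' of overwhelming probability.

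The first and hardest step is a quantitative stability lemma of the following form: for a stability radius $\eta=\eta(\varepsilon)$ and a good event $G_\varepsilon\subset\R^{\dim\hilbert}$, any $C^1$-perturbation of $f_\xi$ of norm at most $\eta$ changes $N_L/L^d$ by at most $\varepsilon$ whenever $\xi\in G_\varepsilon$. The bad event $G_\varepsilon^c$ captures configurations where $f_L$ has a critical value close to $0$ (Morse-theoretic instability of the nodal set), where $\nabla f_L$ is too small somewhere on the nodal set, or where $\|f_L\|_{C^2}$ is atypically large. A Kac--Rice calculation applied to the stationary Gaussian field $f_L$---whose spectral measure is supported on $\sums/L$---bounds the expected number of critical points of $f_L$ with critical value in $[-\delta,\delta]$ by a quantity of order $\delta L^d$, while standard Gaussian tail estimates control $\|f_L\|_{C^2}$. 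Choosing $\delta$ and $\eta$ polynomially small in $\varepsilon$ renders the bad event negligible relative to the target exponential tail. A McShane-style extension then produces a globally Lipschitz function $\widetilde N_L/L^d$ agreeing with $N_L/L^d$ on $G_\varepsilon$, to which Gaussian concentration applies directly, yielding part (1). The stated rate $c(\varepsilon)\gg\varepsilon^{(d+2)^2-1}$ emerges from the cumulative polynomial losses at each stage: the Kac--Rice threshold $\delta$, the stability radius $\eta$, the factor $L$ from the $C^1$-Lipschitz constant of $\xi\mapsto f_\xi$, and the $\varepsilon^2$ in the Gaussian exponent.

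Parts (2) and (3) then follow almost formally. The identity $|\median-\E\{N_L/L^d\}|\le\int_0^\infty\p\{|N_L/L^d-\median|>t\}\,dt$, combined with the a priori Courant bound $N_L\ll L^d$ and part (1), shows that the median--mean gap is smaller than any fixed $\varepsilon$ once $L$ is large enough, so part (2) follows from part (1) after adjusting $C(\varepsilon)$ and $c(\varepsilon)$. Part (3) follows from part (2) by the triangle inequality: along an admissible sequence, Theorem \ref{thm:asymptotic-law} gives $|\E\{N_L/L^d\}-\nu|<\varepsilon/2$ for all sufficiently large $L$. The main obstacle is undoubtedly the stability lemma---showing quantitatively that nodal-component counts are locally constant in the Gaussian coefficients outside an exponentially rare event, and tracking the polynomial dependence on $\varepsilon$ precisely enough to extract the specific exponent $(d+2)^2-1$.
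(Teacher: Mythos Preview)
Your high-level architecture---isolate a good event on which the nodal count is stable under small perturbations, apply Gaussian concentration, and then deduce parts (2) and (3) from part (1)---matches the paper's, and your derivation of (2) and (3) from (1) is essentially what the paper does. However, two steps in your plan for part (1) do not work as written. The first is the McShane/Lipschitz step: $N_L$ is integer-valued, so even on the good event it is not Lipschitz in $\xi$; your stability lemma only gives bounded oscillation over balls of radius $\eta$, and a McShane extension needs an actual Lipschitz bound on the source set. The paper avoids this entirely by applying the Gaussian isoperimetric inequality directly to the level sets $\{N_L/L^d>m_L+\varepsilon\}$ and $\{N_L/L^d<m_L-\varepsilon\}$, using only a \emph{one-sided} estimate $N(f+g)\ge N(f)-\varepsilon L^d$ for $f\notin E$ and $\|g\|_{L^2}\le\rho$. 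Lower semi-continuity is exactly what a shell-type perturbation argument delivers, and it suffices for isoperimetry on each tail separately.

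The second gap is more serious: a Kac--Rice expectation bound on near-critical points followed by Markov yields at best $\p(G_\varepsilon^c)\le C\delta/\varepsilon$, a quantity independent of $\dim\hilbert$, not the required $C(\varepsilon)\e^{-c(\varepsilon)\dim\hilbert}$. The paper's key device is a \emph{second} application of isoperimetry to the exceptional set itself. One covers $\td$ by balls of radius $R/L$, calls a ball ``unstable'' if $|f|\le\alpha$ and $|\nabla f|\le\beta L$ somewhere inside it, and lets $E$ be the set of $f$ with more than $\delta L^d$ unstable balls. Using local elliptic estimates and a Taylor expansion one shows that for every $f\in E\cap\{\|f\|\le2\}$ and every $g$ with $\|g\|\le\tau$, the function $f+g$ lies in a set $U$ (functions with a large-volume region where both $|h|$ and $|\nabla h|$ are small) satisfying $\p(U)\le\tfrac12$ by Fubini and Chebyshev; isoperimetry then gives $\p(E)\le C\e^{-c\tau^2\dim\hilbert}$. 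Your sketch also omits a necessary deterministic ingredient, the bound $\sum_{\Gamma\in\Z(T)}\diam\Gamma\ll D^{d-1}$ for trigonometric polynomials of degree $D$, which caps the number of nodal components too large to sit inside a single ball of radius $R/L$ and is essential for the lower semi-continuity outside $E$. Finally, the exponent $(d+2)^2-1$ emerges only after a joint optimization over the seven coupled parameters $\alpha,\beta,\gamma,\delta,\rho,\tau,R$ subject to five asymptotic constraints; your heuristic accounting does not recover it.
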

\begin{rem}
\label{rem:understanding-the-main-result}In the low dimensions $2\le d\le4$,
it is possible to have sequences of $L$ values for which $\dim\hilbert$
stays bounded as $L$ grows. However, if $\dim\hilbert$ is bounded,
then all parts of Theorem \ref{thm:exponential-concentration} are
completely trivial, and say nothing. This is unlike the case of random
spherical harmonics discussed in \cite{Nazarov-Sodin-spherical-harmonics},
in which the dimension is a simple ascending function of the eigenvalue.

The second and third parts of Theorem \ref{thm:exponential-concentration}
are straightforward consequences of the first part. This is proven
in Subsection \ref{sub:concentration-implies-concentration}. When
$d\ge3$, the third part of the theorem only makes sense when $\nu$
is the same $\nu$ from Theorem \ref{thm:asymptotic-law}. This is
because under the assumption that $\dim\hilbert$ is not bounded from
below (without which, the theorem says nothing anyway), the limit
(\ref{eq:weak-convergence-of-measures}) holds and the sequence is
admissible. However, when $d=2$, we could have a limiting measure
other than $\sigma_{d-1}$ in (\ref{eq:weak-convergence-of-measures}),
so value of $\nu$ in the third part of Theorem \ref{thm:exponential-concentration}
truly depends on the chosen sequence of $L$ values. See also Appendix
\ref{sec:appendix-equidistribution} and \cite{Kurlberg-Wigman-Non-universality}.
\end{rem}

\subsection{Outline of the paper}

In Section \ref{sec:measurability}, we prove the Borel measurability
of the random variable $N_{L}$ - the number of nodal components.
We deduce it from a more general result - the measurability of the
number of nodal components of more general random functions (Proposition
\ref{prop:measurability-generalization}), which may be of independent
interest.

In Section \ref{sec:proof-of-usability-of-Nazarov-Sodin-theorem},
we show that Theorem \ref{thm:asymptotic-law} follows directly from
the Nazarov-Sodin theorem.

In Section \ref{sec:trigonometric-polynomials-and-nodal-sets}, we
treat trigonometric polynomials in general, and give algebraic proofs
to bounds on the sum of diameters of their connected components.

In Section \ref{sec:proof-of-main-theorem}, we present a proof of
Theorem \ref{thm:exponential-concentration}.

In Appendix \ref{sec:appendix-equidistribution}, we provide background
and quote the known results on the problem of equidistribution of
lattice points on spheres.

In Appendix \ref{sec:appendix-additional-proofs}, we provide proofs
for some of the claims used in the paper.

\subsection{Notation}

We reserve the letters $C$ and $c$ for positive constants (usually
upper and lower bounds, respectively) which may vary from line to
line; all constants may depend on the dimension $d$. When $A,B$
are positive quantities, we denote by $A\ll B$, $A\gg B$ and $A\simeq B$
that $A\le CB$, $A\ge cB$ and $cB\le A\le CB$, respectively.

The notation $K_{+\delta}$ indicates the set of all points of distance
at most $\delta$ from the compact set $K$, which may be a set in
$\td$ or $\rd$ with Euclidean distance or $L^{2}\left(\td\right)$
with the norm-induced distance.

\subsection{Acknowledgments}

This work was written following and based on my master's thesis in
Tel Aviv University. I thank my advisor, Mikhail Sodin, for providing
me with the opportunity to work on such a diverse project; his invaluable
guidance and patience has made this work possible. I also thank Zeév
Rudnick, Lior Bary-Soroker, Eugenii Shustin, and my fellow students,
for many discussions and helpful advice. I also thank the anonymous
referee, whose fruitful comments made it possible to improve the presentation
of the paper. This work, and the author's master degree studies, were
partly supported by grant N\textsuperscript{\underline{o}} 166/11
of the Israel Science Foundation of the Israel Academy of Sciences
and Humanities.

\section{\label{sec:measurability}Measurability of \texorpdfstring{$N_{L}$}{NL}}

The random process $f_{L}$ is, formally, a function $f_{L}\colon\td\times\Omega\to\R$,
where $\Omega$ is the Gaussian probability space. In this point of
view, $N_{L}$ is the function on $\Omega$ given by $\omega\mapsto N\left(f_{L}\left(\cdot,\omega\right)\right)$.
\begin{prop}
\label{prop:NL-is-measurable}$N_{L}$ is a random variable. In other
words, the mapping $N_{L}\colon\Omega\to\n\cup\left\{ 0,\infty\right\} $
is measurable.
\end{prop}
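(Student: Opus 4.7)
The plan is to reduce the measurability of $N_L$ to a Borel-measurability statement for a functional on the finite-dimensional space $\hilbert$, and then prove that functional's measurability by an approximation argument. Since $\dim\hilbert<\infty$, the map $\omega\mapsto f_L(\cdot,\omega)$ from $\Omega$ to $\hilbert$ is measurable once $\hilbert$ is equipped with its canonical Borel structure (well-defined because all natural topologies on the finite-dimensional space $\hilbert$ coincide). It therefore suffices to show that the functional $N\colon\hilbert\to\n\cup\{0,\infty\}$, sending $f$ to the number of connected components of $Z(f)$, is Borel measurable. The announced Proposition \ref{prop:measurability-generalization} presumably establishes this in the greater generality of separable spaces of continuous functions on a compact manifold, and the present claim then follows by viewing $\hilbert$ as a subspace of $C(\td)$ (with uniform convergence equivalent to any other topology on $\hilbert$).

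To prove Borel measurability of $N$, I would express $N$ as a pointwise limit of Borel measurable approximating functionals. A natural candidate uses components of sublevel sets: for each $\epsilon>0$, let $N_\epsilon(f)$ count the connected components of the closed set $\{x\in\td:|f(x)|\le\epsilon\}$, possibly further refined by a combinatorial criterion such as requiring each counted component to contain at least one point of a fixed $1/m$-mesh in $\td$. With a careful choice of the criterion, $N_\epsilon$ is upper or lower semicontinuous in $f$ with respect to uniform convergence on $\td$ (distinct components persist under small $C^0$ perturbations because one can witness them by disjoint compact subsets, and conversely a sufficiently fine mesh detects each component at most once), hence Borel measurable. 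The identification $N_\epsilon(f)\to N(f)$ as $\epsilon\downarrow 0$ (and $m\to\infty$) would then express $N$ as a countable pointwise limit of Borel functions, hence itself Borel.

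The main obstacle is verifying this limiting identification for \emph{every} $f\in\hilbert$, not just for generic $f$. When $0$ is a regular value of $f$, the nodal set $Z(f)$ is a smooth compact $(d-1)$-submanifold of $\td$, the sublevel set $\{|f|\le\epsilon\}$ is a tubular neighborhood of $Z(f)$ for all sufficiently small $\epsilon$, and $N_\epsilon(f)=N(f)$ for such $\epsilon$. For degenerate $f$, however, $Z(f)$ may contain isolated points, singular loci where several nodal hypersurfaces meet, or positive-dimensional ``flat'' pieces, and distinct components can be arbitrarily close together. The identification then rests on compactness of $\td$, closedness of $Z(f)$, and continuity of $f$, which together guarantee positive mutual distance between any two distinct components of $Z(f)$ and hence eventual separation by the sublevel sets. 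The delicate point is to arrange the iterated limits so that isolated and positive-dimensional nodal components are each counted exactly once, without requiring advance knowledge of the separation scales; this is where the bulk of the technical work lies.
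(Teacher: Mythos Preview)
Your reduction in the first paragraph is sound, and you correctly identify that the task is to show $N\colon C(\td)\to\n\cup\{0,\infty\}$ (or its restriction to $\hilbert$) is Borel. However, your proposed approximation scheme has a genuine gap. You assert that ``compactness of $\td$, closedness of $Z(f)$, and continuity of $f$ together guarantee positive mutual distance between any two distinct components of $Z(f)$.'' This is false: a closed subset of a compact metric space can have infinitely many components accumulating on one another (think of $\{0\}\cup\{1/n:n\ge1\}\subset[0,1]$, which arises as the zero set of a continuous function). In the specific setting of $\hilbert$ the conclusion does hold, but for a different reason---trigonometric polynomials are real-analytic, so $Z(f)$ is a compact analytic (indeed semi-algebraic) set with finitely many components---and you do not invoke this. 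Even granting finitely many components, the convergence $N_\epsilon(f)\to N(f)$ is not automatic: the sublevel set $\{|f|\le\epsilon\}$ may contain ``phantom'' components disjoint from $Z(f)$, and ruling these out for small $\epsilon$ again requires the finiteness of critical values, i.e.\ analyticity. So your argument, as written, either proves something false (in the generality of continuous $f$) or silently relies on structure you have not used.

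The paper proceeds quite differently and avoids these issues entirely. It factors $N$ as $C(X)\xrightarrow{Z}\mathbf{F}(X)\xrightarrow{\Count}\n\cup\{0,\infty\}$, where $\mathbf{F}(X)$ is the space of closed subsets with its Effros Borel structure, and proves each map measurable by expressing the relevant preimages directly in terms of the generating sets of the $\sigma$-algebras. The key step is a purely topological characterization: $F$ has fewer than $N$ components if and only if every cover of $F$ by $N$ pairwise disjoint open sets (drawn from a fixed countable separating family) admits a proper subcover. This yields Borel measurability for \emph{all} continuous $f$ on any compact metric space, with no smoothness, analyticity, or finite-dimensionality required, and no limiting procedure to control.
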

Naturally, the countable set $\n\cup\left\{ 0,\infty\right\} $ is
equipped with the discrete (power set) $\sigma$-algebra.

There is an ``analytic'' way to prove that $N_{L}$ is at least
Lebesgue measurable. Skipping some details, this proof is as follows:
By Bulinskaya's lemma, the event that $f_{L}$ does not have a ``stable
nodal set'' is a subset of $\Omega$ of Lebesgue measure zero (see
Definition \ref{def:stable-nodal-set} and Proposition \ref{prop:a.s.-stable-nodal-set}
ahead). The complement of this event is open because, since $\dim\hilbert$
is finite, the norm $\left\Vert g\right\Vert $ of any small perturbation
$g\in\hilbert$ bounds both $\max\left|g\right|$ and $\max\left|\nabla g\right|$.
$N_{L}$ is locally constant in this open set (see Proposition \ref{prop:smooth-perturbation}
ahead). Thus, any event of the form $\left\{ N_{L}=n\right\} $ is
a union of an open set and a subset of an event of Lebesgue measure
zero.

The above proof has two weaknesses: The first is that it makes assumptions
on the function space. To use Bulinskaya's lemma, we require the smoothness
of the functions, the finite dimension of $\td$, and a condition
on the probability density of the random process, and to continue
the proof, we need the finite dimension of $\hilbert$. The second
weakness is that the proof only yields Lebesgue measurability.

In this section, we present a strong generalization of Proposition
\ref{prop:NL-is-measurable} and prove it using only the most basic
definitions in topology and measure theory:
\begin{prop}
\label{prop:measurability-generalization}Let $X$ be a compact metric
space, let $\Omega$ be a (not necessarily complete) probability space,
and let $f\colon X\times\Omega\to\R$ be a random real-valued function
on $X$ that is a.s.\ continuous. Then the number of nodal components
of $f$ is a random variable, i.e.\ a measurable mapping $\Omega\to\n\cup\left\{ 0,\infty\right\} $.
\end{prop}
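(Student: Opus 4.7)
The plan is to prove, for each $n \in \n$, that the event $\{\omega : N(f(\cdot, \omega)) \geq n\}$ lies in the $\sigma$-algebra of $\Omega$, by expressing it as a countable boolean combination of elementary events of the form $\{\omega : |f(x, \omega)| > r\}$ with $x$ ranging over a countable dense subset $D \subset X$ and $r$ over positive rationals; these elementary events are measurable because $f(x, \cdot)$ is, by hypothesis, a random variable for each fixed $x$.

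The first and topologically most substantial step is to establish the following characterization: for a compact set $K \subset X$, $N(K) \geq n$ if and only if there exist open sets $V_1, \ldots, V_n \subset X$ with pairwise disjoint closures, $K \subset \bigcup_i V_i$, and $K \cap \bar V_i \neq \emptyset$ for every $i$. The non-trivial ``only if'' direction goes by picking $n$ components $C_1, \ldots, C_n$ of $K$ and using the standard fact that components coincide with quasi-components in a compact Hausdorff space: for every pair $(i,j)$ I produce a clopen subset of $K$ that contains $C_i$ and is disjoint from $C_j$, then assemble these into a clopen partition $K = W_1 \sqcup \cdots \sqcup W_n$ with $C_i \subset W_i$; since the $W_i$ are pairwise disjoint compact subsets of the compact metric space $X$, they have positive mutual distance, and sufficiently small open $\varepsilon$-neighbourhoods give the required $V_i$ with disjoint closures. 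The ``if'' direction is immediate, since each $K \cap \bar V_i$ is a non-empty clopen subset of $K$, giving $n$ distinct clopen pieces.

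Because $X$ is compact metric, hence second countable, I would then fix a countable basis $\mathcal{B}$ of $X$ and let $\mathcal{U}$ be the countable collection of finite unions of elements of $\mathcal{B}$; a routine compactness / finite-subcover argument shows that the $V_i$ in the characterization can always be chosen from $\mathcal{U}$, with closures still pairwise disjoint. Consequently,
\[
\{\omega : N(Z(f(\cdot, \omega))) \geq n\} = \bigcup_{(V_1, \ldots, V_n)} E(V_1, \ldots, V_n),
\]
a countable union over admissible tuples in $\mathcal{U}^n$, where $E(V_1, \ldots, V_n) = \{Z(f) \subset \bigcup_i V_i\} \cap \bigcap_i \{Z(f) \cap \bar V_i \neq \emptyset\}$. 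Fixing a countable dense $D \subset X$, continuity of $f(\cdot, \omega)$ together with the density of $D$ in every open subset of $X$ yield the two key equalities $\{Z(f) \cap \bar V_i \neq \emptyset\} = \{\inf_{x \in D \cap V_i} |f(x, \cdot)| = 0\}$ and $\{Z(f) \subset \bigcup_i V_i\} = \bigcup_{W \in \mathcal{U},\, W \supset X \setminus \bigcup_i V_i} \{\inf_{x \in D \cap W} |f(x, \cdot)| > 0\}$, where in the second equality compactness of $X \setminus \bigcup_i V_i$ is used to thicken non-vanishing on this closed set to non-vanishing on some open superset belonging to $\mathcal{U}$. Each right-hand side is a countable boolean combination of the elementary events $\{|f(x, \cdot)| > r\}$, hence measurable.

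The main difficulty will be the topological lemma of the first step, in particular the clopen-partition construction that rests on the compact-Hausdorff identification of components and quasi-components. A secondary subtlety, arising because $\Omega$ is not assumed complete, is that the equivalences of the final step are justified only on the a.s.\ set where $f(\cdot, \omega)$ is continuous; the cleanest way around this is to \emph{take} the countable-operations formula itself as the definition of the random variable, which is measurable unconditionally and coincides with the usual nodal-component count on the a.s.\ event of continuous realizations.
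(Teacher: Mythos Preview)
Your proof is correct and rests on the same core idea as the paper's: characterize ``$K$ has at least $n$ components'' via a cover by $n$ pairwise disjoint open sets each meeting $K$, reduce to a countable family of such open sets, and then express the resulting events through point evaluations over a countable dense set. The organizational difference is that the paper factors the map as a composition
\[
\Omega \xrightarrow{\ f\ } C(X) \xrightarrow{\ Z\ } \mathbf{F}(X) \xrightarrow{\ \Count\ } \n\cup\{0,\infty\},
\]
equipping $C(X)$ with the $\sigma$-algebra generated by point evaluations and $\mathbf{F}(X)$ with the Effros Borel structure generated by $\{F:F\cap U\ne\Empty\}$, and proves each arrow measurable separately; this isolates the component-counting step as a statement purely about the hyperspace $\mathbf{F}(X)$, independent of the random-function setup, and is more reusable. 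Your direct approach avoids introducing these auxiliary spaces and is more self-contained; you also handle the a.s.-continuity issue (and the non-completeness of $\Omega$) more explicitly than the paper does. On the topological side, the paper reaches the clopen-partition statement via a short Boolean-algebra argument on $\Clopen(K)$ rather than by invoking the component/quasi-component identification, but the content is the same.
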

This immediately implies Proposition \ref{prop:NL-is-measurable}
(where $X=\td$) with Borel measurability. To prove Proposition \ref{prop:measurability-generalization},
observe that the number of nodal components of $f$ is the composition
of three maps: 
\[
\Omega\xrightarrow{\quad f\quad}C\left(X\right)\xrightarrow{\quad Z\quad}{\bf F}\left(X\right)\xrightarrow{\quad\Count\quad}\mathbb{N}\cup\left\{ 0,\infty\right\} 
\]

\begin{itemize}
\item The first map $\omega\mapsto f\left(\cdot,\omega\right)$ sends almost
every $\omega\in\Omega$ to a corresponding function in $C\left(X\right)$.
\item The second map $f\mapsto Z\left(f\right)$ sends a continuous real
function to its (closed) zero set.
\item The third map $F\mapsto\Count\left(F\right)$ counts the number of
connected components of a given closed set $F\subset X$.
\end{itemize}
In the remainder of this section, we show that all three maps are
measurable with respect to the following (standard) $\sigma$-algebras
on $C\left(X\right)$ and $\F\left(X\right)$:
\begin{itemize}
\item The standard $\sigma$-algebra on $C\left(X\right)$ is generated
by the \emph{point-evaluation maps} $\left\{ f\mapsto f\left(x\right)\right\} _{x\in X}$;
that is, it is generated by the family of sets $\left\{ f\in C\left(X\right):f\left(x\right)\in B\right\} $,
where $x$ varies over all points in $X$ and $B$ varies over all
Borel subsets of $\R$.
\item The standard $\sigma$-algebra on $\F\left(X\right)$ is given by
the following equivalent definitions (see also chapters 2.4 and 3.3
of \cite{Srivastava-book-A-course-on-Borel-sets}):

\begin{enumerate}
\item The $\sigma$-algebra $\mathcal{F}_{1}$ generated by the family of
sets $\left\{ F\in{\bf F}\left(X\right):F\cap U\ne\Empty\right\} $,
where $U$ varies over all open subsets of $X$.
\item The $\sigma$-algebra $\mathcal{F}_{2}$ generated by the family of
sets $\left\{ F\in\mathbf{F}\left(X\right):F\subset U\right\} $,
where $U$ varies over all open subsets of $X$.
\item The $\sigma$-algebra $\mathcal{F}_{3}$ generated by the family of
sets $\left\{ F\in\mathbf{F}\left(X\right):F\cap K\ne\Empty\right\} $,
where $K$ varies over all compact subsets of $X$.\end{enumerate}
\begin{proof}[Proof that $\mathcal{F}_{1}=\mathcal{F}_{2}=\mathcal{F}_{3}$]
Any subset of $X$ is compact if and only if it is closed, so the
sets $\left\{ F\in\mathbf{F}\left(X\right):F\subset U\right\} $ and
$\left\{ F\in\mathbf{F}\left(X\right):F\cap K\ne\Empty\right\} $
are complementary when taking $K=X\setminus U$, and we get $\mathcal{F}_{2}=\mathcal{F}_{3}$.

Any open subset of $X$ is a countable union of compact subsets, so
any set $\left\{ F\in{\bf F}\left(X\right):F\cap U\ne\Empty\right\} $
is a countable union of sets $\left\{ F\in\mathbf{F}\left(X\right):F\cap K\ne\Empty\right\} $,
and we have $\mathcal{F}_{1}\subset\mathcal{F}_{3}$. Similarly, any
compact subset is a countable intersection of open subsets, so any
set $\left\{ F\in\mathbf{F}\left(X\right):F\cap K\ne\Empty\right\} $
is a countable intersection of sets $\left\{ F\in{\bf F}\left(X\right):F\cap U\ne\Empty\right\} $,
and we have $\mathcal{F}_{3}\subset\mathcal{F}_{1}$.
\end{proof}
\end{itemize}

It is evident that the first map $\Omega\to C\left(X\right)$ is measurable:
By the definition of the $\sigma$-algebra on $C\left(X\right)$,
this is equivalent to the map $\Omega\to\R$ given by $\omega\mapsto f\left(x,\omega\right)$
being measurable for any $x\in X$, which is precisely the definition
of $f$ being a random function.

The second map is measurable by the following:
\begin{prop}
Let $X$ be a compact metric space. The map $Z\colon C\left(X\right)\to{\bf F}\left(X\right)$
given by $f\mapsto f^{-1}\left(\left\{ 0\right\} \right)$ is measurable.\end{prop}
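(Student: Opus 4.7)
The plan is to verify measurability of $Z$ by checking the preimages of a generating family of the $\sigma$-algebra on $\F\left(X\right)$. Among the three equivalent descriptions $\mathcal{F}_{1},\mathcal{F}_{2},\mathcal{F}_{3}$ given above, the most convenient one for this purpose is $\mathcal{F}_{3}$: the generators are the sets $\left\{ F\in\F\left(X\right):F\cap K\ne\Empty\right\}$ where $K$ ranges over compact (equivalently, closed) subsets of $X$. So it suffices to show that for every compact $K\subset X$, the preimage
\[
Z^{-1}\left(\left\{ F:F\cap K\ne\Empty\right\} \right)=\left\{ f\in C\left(X\right):f\text{ vanishes somewhere on }K\right\}
\]
is measurable in $C\left(X\right)$.

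Since $|f|$ is continuous and $K$ is compact, $f$ vanishes somewhere on $K$ if and only if $\min_{x\in K}\left|f\left(x\right)\right|=0$. So the plan reduces to showing that the functional $\Phi_{K}\colon C\left(X\right)\to\R$ defined by $\Phi_{K}\left(f\right)=\min_{x\in K}\left|f\left(x\right)\right|$ is measurable; the desired preimage is then $\Phi_{K}^{-1}\left(\left\{ 0\right\} \right)$.

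To establish measurability of $\Phi_{K}$, the key step is to exchange the (uncountable) minimum over $K$ for a countable infimum. Here I would use the fact that $X$ is a compact metric space, so $K$ is itself a separable metric space; let $D\subset K$ be a countable dense subset. For any continuous $f$, the minimum of $\left|f\right|$ on $K$ is attained, and by continuity and density one gets $\min_{x\in K}\left|f\left(x\right)\right|=\inf_{x\in D}\left|f\left(x\right)\right|$. Each of the individual maps $f\mapsto\left|f\left(x\right)\right|$ is measurable by definition of the $\sigma$-algebra on $C\left(X\right)$ (it is the composition of point-evaluation at $x$ with the absolute value), and the countable infimum of measurable real-valued functions is again measurable. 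Therefore $\Phi_{K}$ is measurable, and $\Phi_{K}^{-1}\left(\left\{ 0\right\} \right)$ belongs to the $\sigma$-algebra on $C\left(X\right)$.

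There is no real obstacle in the argument; the only choice that matters is to use the generating family from $\mathcal{F}_{3}$ rather than the open-set version from $\mathcal{F}_{1}$, since for a general open $U$ the condition ``$f$ vanishes somewhere on $U$'' is not equivalent to $\inf_{U}\left|f\right|=0$ (the infimum could be attained only on $\partial U$). Using compact test sets sidesteps this issue, and the rest is a standard separability-plus-countable-operation argument.
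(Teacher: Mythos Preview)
Your proof is correct and follows essentially the same approach as the paper: both use the generating family $\mathcal{F}_{3}$ (compact test sets), pick a countable dense subset of $K$, and exploit continuity to replace the uncountable condition ``$f$ vanishes somewhere on $K$'' by a countable one. The only cosmetic difference is that the paper writes the set directly as $\bigcap_{\varepsilon>0}\bigcup_{x\in A}\{f:|f(x)|<\varepsilon\}$, whereas you package the same computation as the measurability of the functional $\Phi_{K}(f)=\inf_{x\in D}|f(x)|$; unwinding your $\Phi_{K}^{-1}(\{0\})$ gives exactly the paper's countable intersection-of-unions.
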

\begin{proof}
We will show that for any compact $K\subset X$, the set $\left\{ F\in{\bf F}\left(X\right):F\cap K\ne\Empty\right\} $
has a measurable preimage under $Z$ - that is, that the set $\left\{ f\in C\left(X\right):\exists x\in K:f\left(x\right)=0\right\} $
is measurable in $C\left(X\right)$. Let $A\subset K$ be countable
and dense in $K$. By a standard continuity argument, we have:
\begin{equation}
\left\{ f\in C\left(X\right):\exists x\in K:f\left(x\right)=0\right\} =\bigcap_{\varepsilon>0}\bigcup_{x\in A}\left\{ f\in C\left(X\right):\left|f\left(x\right)\right|<\varepsilon\right\} .\label{eq:all-f-with-zero-in-K}
\end{equation}
The sets on the right hand side are generating sets in the $\sigma$-algebra
of $C\left(X\right)$, so the set on the left hand side is measurable,
proving the proposition.
\end{proof}
The measurability of the third map - the component counting function
- is a little trickier. We first show a couple of lemmas that will
help translate the number of components to a property of covers by
open sets, which is more easily expressed by generators of the $\sigma$-algebra
on $\F\left(X\right)$.

For any topological space $Y$, we denote by $\Clopen\left(Y\right)$
the Boolean algebra of clopen (that is, closed and open) subsets of
$Y$. Note that $Y$ is connected if and only if $\Clopen\left(Y\right)=\left\{ \Empty,Y\right\} $,
and that if $A\in\Clopen\left(Y\right)$ then $\Clopen\left(A\right)\subset\Clopen\left(Y\right)$.
Recall that any clopen set is a union of connected components, and
that connected components are always closed.
\begin{lem}
\label{lem:counting-components-in-the-clopen-algebra}Let $Y$ be
a topological space and let $N$ be a positive integer. The following
are equivalent:
\begin{enumerate}
\item $Y$ has strictly fewer than $N$ connected components.
\item For any $Y_{1},\ldots,Y_{N}\in\Clopen\left(Y\right)$, if they are
pairwise disjoint then one of them is empty.
\end{enumerate}
\end{lem}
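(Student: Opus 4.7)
The plan is to prove both implications directly, based on the structural observation that every clopen subset $A\subseteq Y$ is a union of connected components of $Y$. Indeed, if a component $C$ of $Y$ meets $A$, then $C=(C\cap A)\sqcup(C\setminus A)$ expresses $C$ as a disjoint union of two sets that are clopen in $C$; by connectedness of $C$ one of them is empty, and since $C\cap A\neq\Empty$, we must have $C\subseteq A$. I will also use the elementary fact that a clopen subset of a clopen subset of $Y$ is itself clopen in $Y$.

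The implication $(1)\Rightarrow(2)$ is then immediate. Suppose $Y$ has at most $N-1$ components, and let $Y_1,\ldots,Y_N\in\Clopen(Y)$ be pairwise disjoint. If every $Y_i$ were nonempty, each would contain at least one connected component of $Y$, and the components contained in different $Y_i$'s would be distinct because the $Y_i$'s are disjoint; this would produce $N$ distinct components of $Y$, contradicting (1).

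For $(2)\Rightarrow(1)$ I will prove the contrapositive: if $Y$ has at least $N$ connected components, there exist $N$ pairwise disjoint nonempty clopen subsets of $Y$. The argument is by strong induction on $N$. The base case $N=1$ is trivial, taking $Y_1=Y$ (which is nonempty, having at least one component). For $N\ge 2$, since $Y$ has at least two components it is disconnected, so $Y=A\sqcup B$ with $A,B\subseteq Y$ nonempty and clopen. Every connected subset of $Y$ lies entirely in $A$ or in $B$, so the components of $Y$ split into $a$ components of $A$ and $b$ components of $B$ with $a+b\ge N$ and $a,b\ge 1$.

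The induction step then divides into two cases. If $a\ge N-1$, the inductive hypothesis applied to $A$ at level $N-1$ produces $N-1$ pairwise disjoint nonempty clopen subsets of $A$, which together with $B$ give the desired $N$ subsets of $Y$; the case $b\ge N-1$ is symmetric. Otherwise $a,b\le N-2<N$, and the inductive hypothesis applied separately to $A$ at level $a$ and to $B$ at level $b$ yields $a+b\ge N$ pairwise disjoint nonempty clopen sets altogether, from which any $N$ can be selected. The only mild obstacle is arranging this case analysis so that every recursive invocation is at a strictly smaller parameter; no separation axiom or other topological hypothesis on $Y$ is required.
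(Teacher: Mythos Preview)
Your proof is correct. The $(1)\Rightarrow(2)$ direction matches the paper's: both pigeonhole on the fact that nonempty clopen sets are unions of components.

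For $(2)\Rightarrow(1)$ your route differs from the paper's. The paper argues directly: assuming (2), it first shows that $\Clopen(Y)$ must be finite (otherwise one can iteratively peel off nonempty clopen pieces $Y_1,Y_2,\ldots$, contradicting (2)), and then invokes the structure of a finite Boolean algebra, whose atoms are precisely the connected components of $Y$; applying (2) to the atoms gives fewer than $N$ components. You instead prove the contrapositive by strong induction on $N$, splitting a disconnected $Y$ as $A\sqcup B$ and recursing on the pieces. Your argument is a bit more elementary, in that it avoids the Boolean-algebra language entirely and builds the $N$ disjoint clopen sets by hand. The paper's argument, on the other hand, yields the slightly stronger intermediate conclusion that condition (2) already forces $\Clopen(Y)$ to be finite, not just to lack $N$ disjoint nonempty members. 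Both arguments work for an arbitrary topological space with no separation hypotheses.
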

\begin{proof}
The second condition follows from the first simply by the pigeonhole
principle, since clopen sets are unions of connected components. Conversely,
assume the second condition holds. Suppose that $\Clopen\left(Y\right)$
is an infinite collection of sets. Let $Y_{1}\in\Clopen\left(Y\right)$.
Any clopen set $A$ is the union of $A\cap Y_{1}$ and $A\setminus Y_{1}$,
both clopen, so either $\Clopen\left(Y_{1}\right)$ is infinite or
$\Clopen\left(Y\setminus Y_{1}\right)$ is infinite. We may thus iteratively
construct a sequence $Y_{1},Y_{2},\ldots\in\Clopen\left(Y\right)$
of nonempty pairwise disjoint sets, contradicting the assumed condition.
Therefore, $\Clopen\left(Y\right)$ must be finite, and being a finite
Boolean algebra, it is generated by a finite number of atoms - clopen
sets with no clopen subsets. These atoms are precisely the connected
components, and applying the assumed condition on the atoms, there
must be fewer than $N$ of them.
\end{proof}
For the following, recall that any compact metric space $X$ admits
a countable collection $\mathcal{U}$ of open sets which separates
closed sets: For any closed, pairwise disjoint $F_{1},\ldots,F_{n}\subset X$,
there are pairwise disjoint $U_{1},\ldots,U_{n}\in\mathcal{U}$ with
$F_{i}\subset U_{i}$. We skip the proof of this, which follows easily
from the fact that any compact metric space is second-countable and
normal.
\begin{lem}
\label{lem:N-components-separated-by-open-sets}Let $X$ be a compact
metric space, let $\mathcal{U}$ be a collection of open sets in $X$
which separates closed sets, let $F\subset X$ be closed and let $N$
be a positive integer. The following are equivalent:
\begin{enumerate}
\item $F$ has strictly fewer than $N$ connected components.
\item For any pairwise disjoint $U_{1},\ldots,U_{N}\in\mathcal{U}$ such
that $F\subset\bigcup_{i=1}^{N}U_{i}$, there is a proper subcover
(i.e.\ one of the sets $U_{i}$ does not intersect $F$).
\end{enumerate}
\end{lem}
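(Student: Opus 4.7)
The plan is to prove the two implications separately. The forward direction (1)$\Rightarrow$(2) reduces directly to Lemma \ref{lem:counting-components-in-the-clopen-algebra} applied inside $F$, while the reverse direction (2)$\Rightarrow$(1), which I would argue contrapositively, requires combining the separation property of $\mathcal{U}$ with a finite clopen decomposition of $F$.

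For (1)$\Rightarrow$(2), given pairwise disjoint $U_{1},\ldots,U_{N}\in\mathcal{U}$ with $F\subset\bigcup_{i}U_{i}$, I would set $A_{i}=F\cap U_{i}$. Each $A_{i}$ is open in $F$, and since $F\setminus A_{i}=\bigcup_{j\ne i}A_{j}$ is also open in $F$, each $A_{i}$ is in fact clopen in $F$. The $A_{i}$'s are pairwise disjoint, so applying Lemma \ref{lem:counting-components-in-the-clopen-algebra} with $Y=F$ forces some $A_{i}$ to be empty, i.e.\ the corresponding $U_{i}$ misses $F$ and the remaining $U_{j}$'s form a proper subcover.

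For the contrapositive of (2)$\Rightarrow$(1), suppose $F$ has at least $N$ connected components and pick distinct components $C_{1},\ldots,C_{N}$ of $F$. Since $F$ is a compact Hausdorff space, connected components coincide with quasi-components, so for any two distinct components there is a subset of $F$ that is clopen in $F$, contains one of them, and is disjoint from the other. Using this, I would build inductively a partition $F=B_{1}\sqcup\cdots\sqcup B_{N}$ into $N$ pairwise disjoint nonempty clopen subsets of $F$ with $C_{i}\subset B_{i}$: at step $i<N$, intersect finitely many such separating clopen sets to extract a clopen piece $B_{i}$ of the current remainder which contains $C_{i}$ and avoids $C_{i+1},\ldots,C_{N}$, and let $B_{N}$ be the final remainder (which contains $C_{N}$, hence is nonempty). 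Each $B_{i}$ is closed in $X$, so the separation hypothesis on $\mathcal{U}$ supplies pairwise disjoint $U_{1},\ldots,U_{N}\in\mathcal{U}$ with $B_{i}\subset U_{i}$. These cover $F$, and each $U_{i}$ contains the nonempty set $B_{i}\subset F$, so no proper subcover exists, which is the negation of (2).

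The delicate step is the clopen partition. Components of a closed subset of a compact metric space need not be open in it (consider Cantor-like component structures), and $F$ may have infinitely many components in total, so one cannot merely take the $C_{i}$'s themselves or try to separate them one at a time from an unwieldy complement. The quasi-component characterization is precisely what allows the finite count $N$ to be decoupled from this pathology, reducing the question to the separation axiom of $\mathcal{U}$ applied to a fixed finite family of pairwise disjoint closed subsets of $X$.
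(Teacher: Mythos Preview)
Your proof is correct. The forward direction $(1)\Rightarrow(2)$ is identical to the paper's argument. For the reverse direction, however, you take a genuinely different route.

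The paper proves $(2)\Rightarrow(1)$ directly by verifying condition~(2) of Lemma~\ref{lem:counting-components-in-the-clopen-algebra}: given \emph{arbitrary} pairwise disjoint $F_{1},\ldots,F_{N}\in\Clopen(F)$, it assumes without loss of generality that they cover $F$, separates them by $U_{1},\ldots,U_{N}\in\mathcal{U}$, and then hypothesis~(2) forces some $U_{i}$ to miss $F$, hence $F_{i}=\Empty$. This uses Lemma~\ref{lem:counting-components-in-the-clopen-algebra} in both directions and requires no further topology. Your contrapositive argument instead starts from $N$ chosen components and explicitly manufactures the clopen partition $B_{1},\ldots,B_{N}$ via the fact that components coincide with quasi-components in compact Hausdorff spaces. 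This is correct, and your discussion of why one cannot simply take the $C_{i}$ themselves is apt, but it imports an extra topological ingredient that the paper's argument sidesteps entirely. The paper's route is shorter and more self-contained; yours is more constructive and makes the geometry of the partition visible.
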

\begin{proof}
Suppose the first condition holds. Let $U_{1},\ldots,U_{N}\in\mathcal{U}$
be pairwise disjoint sets that cover $F$. Each $F\cap U_{i}$ is
relatively open in $F$, and its $F$-complement $F\cap\left(\bigcup_{j\ne i}U_{j}\right)$
is also relatively open in $F$. Therefore, $F\cap U_{i}\in\Clopen\left(F\right)$.
By Lemma \ref{lem:counting-components-in-the-clopen-algebra}, one
of the $F\cap U_{i}$ must be empty.

Conversely, suppose the second condition holds. We will show that
for any $F_{1},\ldots,F_{N}\in\Clopen\left(F\right)$ that are pairwise
disjoint, one of them must be empty, and then we are done by Lemma
\ref{lem:counting-components-in-the-clopen-algebra}.

Without loss of generality, we assume $F=\bigcup_{i=1}^{N}F_{i}$
(otherwise, we replace $F_{1}$ with $F\setminus\bigcup_{i=2}^{N}F_{i}$).
Since $F_{1},\ldots,F_{N}$ are relatively closed in $F$, they are
closed in $X$. Let $U_{1},\ldots,U_{N}\in\mathcal{U}$ be pairwise
disjoint such that $F_{i}\subset U_{i}$. By the hypothesis of the
second condition, one of the $U_{i}$ does not intersect $F$, meaning
one of the $F_{i}$ is empty.\end{proof}
\begin{prop}
Let $X$ be a compact metric space. The map $\Count\colon\F\left(X\right)\to\mathbb{N}\cup\left\{ 0,\infty\right\} $,
that counts the number of connected components in the given closed
set, is measurable.\end{prop}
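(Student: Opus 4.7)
The plan is to reduce measurability of $\Count$ to showing that for each positive integer $N$, the set $\{F \in \F(X) : \Count(F) < N\}$ lies in the standard $\sigma$-algebra on $\F(X)$. Once this is done, the preimage of every singleton $\{n\} \subset \mathbb{N} \cup \{0,\infty\}$ is measurable: for finite $n$ we take a set difference $\{\Count < n+1\} \setminus \{\Count < n\}$, and $\{\Count = \infty\} = \bigcap_N \{\Count \geq N\}$.

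To handle $\{\Count(F) < N\}$, I will apply Lemma \ref{lem:N-components-separated-by-open-sets} with a fixed \emph{countable} collection $\mathcal{U}$ of open subsets of $X$ that separates closed sets (such a $\mathcal{U}$ exists because $X$ is a compact metric space, hence second-countable and normal). The lemma translates the condition $\Count(F) < N$ into a statement purely about how $F$ meets and sits inside finite disjoint unions of members of $\mathcal{U}$. Explicitly,
\[
\{F \in \F(X) : \Count(F) < N\} = \bigcap_{(U_1,\ldots,U_N)} \left( \bigl\{F : F \not\subset {\textstyle\bigcup_i U_i}\bigr\} \cup \bigcup_{i=1}^N \bigl\{F : F \cap U_i = \Empty\bigr\} \right),
\]
where the outer intersection ranges over all $N$-tuples of pairwise disjoint elements of $\mathcal{U}$. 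Since $\mathcal{U}$ is countable, so is the index set of the outer intersection.

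It remains to verify that each building block is measurable in $\F(X)$. The set $\{F : F \subset U\}$ is a generator of $\mathcal{F}_2$ for every open $U$, so its complement $\{F : F \not\subset U\}$ is measurable; in particular this applies to the open set $\bigcup_i U_i$. Similarly, $\{F : F \cap U_i \ne \Empty\}$ is a generator of $\mathcal{F}_1$, so its complement $\{F : F \cap U_i = \Empty\}$ is measurable. A countable intersection of countable unions of measurable sets is measurable, which finishes the proof.

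I do not expect a serious obstacle here: the preceding two lemmas have already done the topological work of converting a statement about connected components into a statement about finite disjoint open covers. The only care needed is to ensure countability of the index set in the expression above, which is guaranteed by the countability of $\mathcal{U}$, and to check that the expression I write down genuinely matches the logical content of Lemma \ref{lem:N-components-separated-by-open-sets} (the quantifier ``for all disjoint covers there is a proper subcover'' becomes ``for all disjoint $(U_1,\ldots,U_N)$, either it is not a cover of $F$, or some $U_i$ misses $F$'').
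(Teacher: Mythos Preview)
Your proposal is correct and follows essentially the same approach as the paper: reduce to showing $\{F:\Count(F)<N\}$ is measurable, invoke Lemma~\ref{lem:N-components-separated-by-open-sets} with a countable separating family $\mathcal{U}$, and express this set as a countable intersection over disjoint $N$-tuples from $\mathcal{U}$ of the union $\{F\not\subset\bigcup_i U_i\}\cup\bigcup_i\{F\cap U_i=\Empty\}$, each piece being a generator (or complement of one) of the standard $\sigma$-algebra on $\F(X)$.
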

\begin{proof}
The sets $\left\{ 0\right\} ,\left\{ 0,1\right\} ,\left\{ 0,1,2\right\} ,\ldots$
generate the $\sigma$-algebra on $\mathbb{N}\cup\left\{ 0,\infty\right\} $,
so it is enough to show that given a positive integer $N$, the following
set is measurable:
\[
\F_{<N}\coloneqq\left\{ F\in{\bf F}\left(X\right):F\text{ has strictly fewer than }N\text{ connected components}\right\} .
\]

Let $\mathcal{U}$ be a countable collection of open sets separating
closed sets. By Lemma \ref{lem:N-components-separated-by-open-sets},
$\F_{<N}$ may be written as the set of all $F\in\F\left(X\right)$
such that for any pairwise disjoint $U_{1},\dots,U_{N}\in\mathcal{U}$,
if $F\subset\bigcup_{i=1}^{N}U_{i}$ then one of the $U_{i}$ does
not intersect $F$:
\begin{align}
{\bf F}_{<N} & =\bigcap_{\substack{U_{1},\ldots,U_{N}\in\mathcal{U}\\
\text{pairwise disjoint}
}
}\left\{ F\in\mathbf{F}\left(X\right):\left(\left(F\subset\bigcup_{i=1}^{N}U_{i}\right)\implies\left(\exists i:F\cap U_{i}=\Empty\right)\right)\right\} \nonumber \\
 & =\bigcap_{\substack{U_{1},\ldots,U_{N}\in\mathcal{U}\\
\text{pairwise disjoint}
}
}\left\{ F\in\mathbf{F}\left(X\right):\left(F\not\subset\bigcup_{i=1}^{N}U_{i}\right)\text{ or }\left(\exists i:F\cap U_{i}=\Empty\right)\right\} \nonumber \\
 & =\bigcap_{\substack{U_{1},\ldots,U_{N}\in\mathcal{U}\\
\text{pairwise disjoint}
}
}\left(\left\{ F\in\mathbf{F}\left(X\right):F\not\subset\bigcup_{i=1}^{N}U_{i}\right\} \cup\bigcup_{i=1}^{N}\left\{ F\in\mathbf{F}\left(X\right):F\cap U_{i}=\Empty\right\} \right).\label{eq:final-form-of-F<N}
\end{align}
Subsets of $\F\left(X\right)$ of the form $\left\{ F\in\mathbf{F}\left(X\right):F\not\subset U\right\} $
and $\left\{ F\in\mathbf{F}\left(X\right):F\cap U=\Empty\right\} $,
where $U\subset X$ is open, are basic measurable sets in the $\sigma$-algebra
on $\F\left(X\right)$. Thus, the expression under the (countable)
intersection in (\ref{eq:final-form-of-F<N}) evaluates to a measurable
set, and we get that $\mathbf{F}_{<N}$ is measurable.
\end{proof}

\section{\label{sec:proof-of-usability-of-Nazarov-Sodin-theorem}Proof of
Theorem \ref{thm:asymptotic-law}}

The Nazarov-Sodin theorem \cite{Nazarov-Sodin-asymptotic-laws} (see
also lecture notes \cite{Sodin-SPB-Lecture-Notes}) gives an asymptotic
law for the expected number of connected components of Gaussian functions
under very general conditions. In this section, we show that Theorem
\ref{thm:asymptotic-law} is a specialization of the Nazarov-Sodin
theorem for our case.

We begin by computing the objects $K_{x,L}\left(u,v\right)$ and $C_{x,L}\left(u\right)$,
as they are defined in \cite[Sections 2.2-2.3]{Sodin-SPB-Lecture-Notes}.
In our case, $K_{L}$ is given by (\ref{eq:KL}). First, the scaled
covariance kernel $K_{x,L}\left(u,v\right)$:
\begin{equation}
K_{x,L}\left(u,v\right)=K_{L}\left(x+\frac{u}{L},x+\frac{v}{L}\right)=K_{L}\left(\frac{u-v}{L}\right)=\frac{1}{\dim\hilbert}\sum_{\lambda\in\sums}\cos\left(2\pi\frac{\lambda}{L}\cdot\left(u-v\right)\right).\label{eq:KxL}
\end{equation}
Note that this expression for $K_{x,L}\left(u,v\right)$ satisfies
the definition for $C^{3,3}$-smoothness of the ensemble (see \cite[Definition 2]{Sodin-SPB-Lecture-Notes},
where it is called ``separate $C^{3}$-smoothness''). To see this,
it is enough to show that the partial derivative $\partial_{u}^{i}\partial_{v}^{j}K_{x,L}\left(u,v\right)$
with $0\le i,j\le3$ remains uniformly bounded. Note that this partial
derivative is given by an expression similar to (\ref{eq:KxL}), where
the function in the sum is either $\pm\cos$ or $\pm\sin$ (depending
on $i+j$), and the addend corresponding to $\lambda$ is multiplied
by $\pm2\pi\lambda_{m}L^{-1}\in\left[-2\pi,2\pi\right]$ every time
a derivative is taken with respect to $u_{m}$ or $v_{m}$; therefore,
$\left|\partial_{u}^{i}\partial_{v}^{j}K_{x,L}\left(u,v\right)\right|\le\left(2\pi\right)^{6}$.

Second, the scaled covariance matrix $C_{x,L}\left(u\right)$. By
the following computation, we have that $C_{x,L}\left(u\right)$ is
simply a constant multiple of the identity matrix:
\begin{align*}
\left(C_{x,L}\left(u\right)\right)_{ij} & =\left.\partial_{u_{i}}\partial_{v_{j}}K_{x,L}\left(u,v\right)\right|_{v=u}=\left.\frac{1}{\dim\hilbert}\sum_{\lambda\in\sums}\left(2\pi\right)^{2}\frac{\lambda_{i}\lambda_{j}}{L^{2}}\cos\left(2\pi\lambda\cdot\frac{u-v}{L}\right)\right|_{v=u}\\
 & =\frac{1}{\dim\hilbert}\frac{\left(2\pi\right)^{2}}{L^{2}}\sum_{\lambda\in\sums}\lambda_{i}\lambda_{j}=\begin{cases}
\hfill0\hfill & \text{if }i\ne j\\
\hfill{\displaystyle \frac{4\pi^{2}}{d}}\hfill & \text{if }i=j
\end{cases}
\end{align*}
In the last step we have used an orthogonality relation that can be
seen easily by observing symmetries within the set $\sums$. Thus,
it clearly satisfies the definition for non-degeneracy of the ensemble
(see \cite[Definition 3]{Sodin-SPB-Lecture-Notes}).

Finally, we introduce our target limiting spectral measure for the
process: $\sigma_{d-1}$, the normalized Lebesgue measure on the sphere
$\sphere$, with $\sigma_{d-1}\left(\sphere\right)=1$. The target
translation-invariant local limiting covariance kernel $k$ is thus
the Fourier (cosine) transform of $\sigma_{d-1}$:
\[
k\left(x\right)\coloneqq\Int[\sphere]{\cos\left(2\pi x\cdot\zeta\right)}{\sigma_{d-1}\left(\zeta\right)}.
\]
Under the assumption that $L\to\infty$ through an admissible sequence
of $L$ values (Definition \ref{def:admissible-sequence}), we have
$K_{x,L}\left(u,v\right)\to k\left(x\right)$ pointwise in $x\in\rd$.
Pointwise convergence implies compact convergence in $\rd$ by a standard
application of the Arzelà-Ascoli theorem, and we get translation-invariant
local limits as in \cite[Definition 1]{Sodin-SPB-Lecture-Notes}.

Thus, Theorem \ref{thm:asymptotic-law} follows from \cite[Theorem 4]{Sodin-SPB-Lecture-Notes},
where the positivity of the constant $\nu$ for the spectral measure
$\sigma_{d-1}$ follows from condition $\left(\rho4\right)$ in \cite[Theorem 1]{Sodin-SPB-Lecture-Notes}.

\section{\label{sec:trigonometric-polynomials-and-nodal-sets}Trigonometric
polynomials and their nodal sets}

\subsection{Stability of nodal sets under small perturbations}

We begin by introducing some notation and definitions for the discussion
of the topological stability of a function's nodal set under small
perturbations. Let $f\colon\td\to\R$ be any continuous function.
We define $Z\left(f\right),\Z\left(f\right)$ and $N\left(f\right)$
by:
\begin{align*}
Z\left(f\right) & \coloneqq\left\{ x\in\td:f\left(x\right)=0\right\} =f^{-1}\left(\left\{ 0\right\} \right)\\
\Z\left(f\right) & \coloneqq\left\{ \text{connected components of }Z\left(f\right)\right\} \\
N\left(f\right) & \coloneqq\text{\# connected components of }Z\left(f\right)=\#\Z\left(f\right)
\end{align*}
$Z\left(f\right)$ is called the \emph{nodal set} of $f$, and its
connected components (which comprise $\Z\left(f\right)$) are called
the \emph{nodal components} of $f$. The connected components of $\td\setminus Z\left(f\right)$
are called the \emph{nodal domains} of $f$.
\begin{defn}
\label{def:stable-nodal-set}We say that a $C^{1}$-smooth function
$f\colon\td\to\R$ \emph{has a stable nodal set} if $\nabla f\left(x\right)\ne0$
for all $x\in Z\left(f\right)$.\end{defn}
\begin{rem}
\label{rem:about-nodal-stability}By compactness, the condition that
$f$ has a stable nodal set is equivalent to the existence of $\alpha,\beta>0$
such that for any $x\in\td$, $\left|f\left(x\right)\right|>\alpha$
or $\left|\nabla f\left(x\right)\right|>\beta$, and $\alpha,\beta$
may be chosen under a constraint $\alpha/\beta<\delta$ for any arbitrary
$\delta>0$. Note that if $f$ has a stable nodal set then $Z\left(f\right)$
is a $\left(d-1\right)$-dimensional smooth compact submanifold of
$\td$ having finitely many connected components.
\end{rem}
By the following two propositions, stable nodal sets are indeed stable
under small perturbations. Proposition \ref{prop:perturbation-in-U}
discusses ``local'' stability in an open subset of the torus, and
Proposition \ref{prop:smooth-perturbation} is a ``global'' version
(cf.\ \cite[Corollary 4.3]{Nazarov-Sodin-spherical-harmonics}).
These propositions may be proven in a standard way, by studying the
flow of the vector field $\left|\nabla f\right|^{-2}\nabla f$, and
for completeness, we provide their proofs in Appendix \ref{sec:appendix-additional-proofs}.
\begin{prop}
\label{prop:perturbation-in-U}Let $\alpha,\beta>0$ and let $U$
be an open subset of $\td$. Let $f\colon U\to\R$ be a smooth function
such that $\left|f\left(x\right)\right|>\alpha$ or $\left|\nabla f\left(x\right)\right|>\beta$
for any $x\in U$.

Let $g\colon U\to\R$ be a continuous function such that $\left|g\left(x\right)\right|<\alpha$
for any $x\in U$ (this is the ``small perturbation'').

Then for each connected component $\Gamma$ of $\left\{ x\in U:f\left(x\right)=0\right\} $
that satisfies $\Gamma_{+\alpha/\beta}\subset U$ , there is a connected
component $\tilde{\Gamma}\subset\Gamma_{+\alpha/\beta}$ of $\left\{ x\in U:f\left(x\right)+g\left(x\right)=0\right\} $.
Furthermore, the mapping $\Gamma\mapsto\tilde{\Gamma}$ is injective
(that is, different components $\Gamma$ generate different components
$\tilde{\Gamma}$).
\end{prop}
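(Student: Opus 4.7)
The plan is to use the flow of the gradient-like vector field $V \coloneqq \nabla f / |\nabla f|^2$ (defined wherever $\nabla f \neq 0$) to build a tubular neighborhood of $\Gamma$, and then apply the intermediate value theorem on each flow line to locate zeros of $h \coloneqq f + g$ there. First I would observe that the hypothesis forces $|\nabla f| > \beta$ throughout the open set $A \coloneqq \{x \in U : |f(x)| < \alpha\}$, so $V$ is well-defined and smooth on $A$ with $|V| < 1/\beta$. Along an integral curve $\phi_t(y)$ starting at $y \in \Gamma \subset A$, the chain rule gives $\tfrac{d}{dt} f(\phi_t(y)) = \nabla f \cdot V \equiv 1$, hence $f(\phi_t(y)) = t$; in particular the flow stays in $A$ for $|t| < \alpha$ and moves a total distance of at most $|t|/\beta < \alpha/\beta$, so $\phi_t(y) \in \Gamma_{+\alpha/\beta} \subset U$.

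Next I would show that $\Phi(y, t) \coloneqq \phi_t(y)$ is a diffeomorphism from $\Gamma \times (-\alpha, \alpha)$ onto an open tube $T \subset \Gamma_{+\alpha/\beta}$. Injectivity is immediate from $f \circ \Phi(y, t) = t$ (which forces $t_1 = t_2$ whenever $\Phi(y_1, t_1) = \Phi(y_2, t_2)$) combined with uniqueness of integral curves (which then forces $y_1 = y_2$); the local diffeomorphism property follows because $V$ is a positive multiple of $\nabla f$ and hence transverse to the smooth hypersurface $\Gamma$. On this tube, $h$ takes the simple form $h(\Phi(y, t)) = t + g(\Phi(y, t))$; since $|g| < \alpha$ strictly, this expression is positive as $t \to \alpha$ and negative as $t \to -\alpha$, so the intermediate value theorem (requiring only continuity of $g$) yields at least one zero of $h$ on every flow line. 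Moreover, at the boundary of $T$ one has $|f| = \alpha$ and hence $|h| \geq |f| - |g| > 0$, so $Z(h) \cap T$ is disjoint from $\partial T$; a standard clopen argument then shows that any connected component $C$ of $Z(h)$ meeting $T$ must be entirely contained in $T$. Choosing any such component yields the desired $\tilde{\Gamma} \subset T \subset \Gamma_{+\alpha/\beta}$.

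For the injectivity of the assignment $\Gamma \mapsto \tilde{\Gamma}$, the key point is that tubes around distinct components $\Gamma_1 \neq \Gamma_2$ are disjoint: a common point $x \in T_1 \cap T_2$ would satisfy $f(x) = t_1 = t_2$, and integrating the flow backwards from $x$ by this common time would produce a single endpoint lying in both $\Gamma_1$ and $\Gamma_2$, contradicting $\Gamma_1 \neq \Gamma_2$. Hence any choices $\tilde{\Gamma}_1 \subset T_1$ and $\tilde{\Gamma}_2 \subset T_2$ are automatically distinct. The main technical step, I expect, is the tubular neighborhood construction --- verifying that $\Phi$ is globally injective and a diffeomorphism onto its image, and identifying $\partial T$ with points where $|f| = \alpha$ --- since once that is in hand the perturbation argument uses only $|g| < \alpha$ and the intermediate value theorem, with no smoothness of $g$ required.
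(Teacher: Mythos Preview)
Your proposal is correct and follows essentially the same route as the paper: the paper packages your tubular-neighborhood construction into a ``shell lemma'' built from the flow of the same vector field $V=\nabla f/|\nabla f|^{2}$, shows that $\Phi\colon\Gamma\times(-\alpha,\alpha)\to S_{\Gamma}$ is a diffeomorphism onto an open shell with $f=\pm\alpha$ on its two boundary pieces, and records that shells for distinct $\Gamma$'s are disjoint; Proposition~\ref{prop:perturbation-in-U} is then declared immediate. Your IVT-plus-clopen argument is exactly the step the paper leaves to the reader.
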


\begin{prop}
\label{prop:smooth-perturbation}Let $\alpha,\beta>0$ and let $f\colon\td\to\R$
be a smooth function such that $\left|f\left(x\right)\right|>\alpha$
or $\left|\nabla f\left(x\right)\right|>\beta$ for any $x\in\td$.

Let $g\colon\td\to\R$ be a smooth function such that $\left|g\left(x\right)\right|<\alpha/2$
and $\left|\nabla g\left(x\right)\right|<\beta/2$ for any $x\in\td$.

Then there is a bijection $\Z\left(f\right)\to\Z\left(f+g\right)$
mapping each $\Gamma\in\Z\left(f\right)$ to a corresponding $\tilde{\Gamma}\in\Z\left(f+g\right)$,
which satisfies:
\[
\diam\Gamma\le\frac{2\alpha}{\beta}+\diam\tilde{\Gamma}.
\]
\end{prop}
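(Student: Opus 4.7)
The plan is to apply Proposition \ref{prop:perturbation-in-U} twice — once to $(f,g)$ to produce an injection $\Z(f)\hookrightarrow\Z(f+g)$, and once in the reverse direction to $(f+g,-g)$ to produce an injection $\Z(f+g)\hookrightarrow\Z(f)$ — and then to invert the second injection in order to obtain a bijection from which the diameter bound falls out directly.

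For the first application, take $U=\td$ and parameters $(\alpha,\beta)$: since $\Gamma_{+\alpha/\beta}\subset\td$ holds trivially for every $\Gamma\in\Z(f)$, Proposition \ref{prop:perturbation-in-U} furnishes an injection $\Phi\colon\Z(f)\to\Z(f+g)$ with $\Phi(\Gamma)\subset\Gamma_{+\alpha/\beta}$. For the reverse application, note that at every point $|f|>\alpha$ implies $|f+g|>\alpha/2$ and $|\nabla f|>\beta$ implies $|\nabla(f+g)|>\beta/2$, so $f+g$ satisfies the stability hypothesis with parameters $(\alpha/2,\beta/2)$; meanwhile $|-g|<\alpha/2$ makes $-g$ an admissible perturbation. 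Applying Proposition \ref{prop:perturbation-in-U} to $(f+g,-g)$ with $U=\td$ and these halved constants — and noting that the neighborhood ratio $(\alpha/2)/(\beta/2)$ still equals $\alpha/\beta$ — gives an injection $\Psi\colon\Z(f+g)\to\Z(f)$ with $\Psi(\tilde\Gamma)\subset\tilde\Gamma_{+\alpha/\beta}$.

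Both $f$ and $f+g$ satisfy the stability condition, so by Remark \ref{rem:about-nodal-stability} their zero sets are smooth compact hypersurfaces with finitely many components, and in particular $\Z(f),\Z(f+g)$ are finite. Injections in both directions then force $\#\Z(f)=\#\Z(f+g)$, making $\Psi$ itself a bijection. Take $\Psi^{-1}\colon\Z(f)\to\Z(f+g)$ as the bijection claimed by the proposition: writing $\tilde\Gamma=\Psi^{-1}(\Gamma)$, the relation $\Gamma=\Psi(\tilde\Gamma)\subset\tilde\Gamma_{+\alpha/\beta}$ means that any $p,q\in\Gamma$ admit $p',q'\in\tilde\Gamma$ within distance $\alpha/\beta$, and the triangle inequality $d(p,q)\le d(p,p')+d(p',q')+d(q',q)$, followed by taking the supremum over $p,q$, yields $\diam\Gamma\le 2\alpha/\beta+\diam\tilde\Gamma$.

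The only subtle point is choosing the correct direction: it is the reverse injection $\Psi$ that produces the inclusion $\Gamma\subset\tilde\Gamma_{+\alpha/\beta}$, and only this inclusion (not the ``forward'' $\tilde\Gamma\subset\Gamma_{+\alpha/\beta}$ coming from $\Phi$) gives a bound on $\diam\Gamma$ in terms of $\diam\tilde\Gamma$. Beyond that, the argument is a cardinality pigeonhole plus one triangle inequality, and no new technical input beyond Proposition \ref{prop:perturbation-in-U} is needed.
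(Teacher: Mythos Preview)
Your proof is correct and follows essentially the same overall strategy as the paper's: apply Proposition \ref{prop:perturbation-in-U} in both directions to establish that the two injections are bijections, and then extract the diameter bound via the triangle inequality. The one genuine difference lies in how the diameter inequality is obtained. The paper takes the \emph{forward} bijection $\Phi$ as its map $\Gamma\mapsto\tilde\Gamma$, and then invokes property~(ii) of the shell lemma (Lemma \ref{lem:shell}) to locate, for each $p\in\Gamma$, a point of $Z(f+g)$ inside $\overline{B}(p,\alpha/\beta)$; the uniqueness clause ``$\tilde\Gamma$ is the only component inside $S_\Gamma$'' is what guarantees this point lies in $\tilde\Gamma$. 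You instead take the \emph{reverse} injection $\Psi$, invert it, and read the containment $\Gamma=\Psi(\tilde\Gamma)\subset\tilde\Gamma_{+\alpha/\beta}$ directly off the conclusion of Proposition \ref{prop:perturbation-in-U}. Your route is a bit cleaner here, since it treats Proposition \ref{prop:perturbation-in-U} as a black box and does not need to reopen the shell construction; the paper's route, on the other hand, shows that the forward map $\Phi$ itself already satisfies the diameter bound, which is marginally more informative.
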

\begin{lyxcode}

\end{lyxcode}
The following proposition allows us to change our focus from the number
of nodal components to the number of nodal domains and vice versa,
by showing that their difference is very small. It is proven in a
standard way using singular homology theory. A proof is presented
in Appendix \ref{sec:appendix-additional-proofs}.
\begin{prop}
\label{prop:nodal-components-similar-to-nodal-domains}Let $f\colon\td\to\R$
be smooth with a stable nodal set.
\begin{enumerate}
\item If $f$ has $k$ nodal components and $r$ nodal domains, then $r-1\le k\le r+d-1$.
\item If $f$ has $k'$ nodal components and $r'$ nodal domains lying completely
inside some open ball of radius less than $\frac{1}{2}$, then $k'\le r'$.
\end{enumerate}
\end{prop}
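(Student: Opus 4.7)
My plan is to handle the two parts by different methods: a singular-homology argument for (1), and a direct Jordan--Brouwer plus nesting argument inside the ball for (2).

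For part (1), the stability hypothesis means $Z(f)$ is a compact smooth codimension-one submanifold of $\td$ with $k$ connected components and admits a tubular neighborhood $N$. I would work in singular homology with $\mathbb{Z}/2$ coefficients (so orientation is a non-issue) and exploit the long exact sequence of the pair $(\td, \td\setminus Z(f))$. The central identification is
\[
H_1(\td, \td\setminus Z(f); \mathbb{Z}/2) \cong H_1(N, N\setminus Z(f); \mathbb{Z}/2) \cong H_0(Z(f); \mathbb{Z}/2) \cong (\mathbb{Z}/2)^{k},
\]
obtained by excising $\td \setminus N$ and then applying the Thom isomorphism to the rank-one normal bundle of $Z(f)$. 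Plugging this into the tail
\[
H_1(\td; \mathbb{Z}/2) \to H_1(\td, \td\setminus Z(f); \mathbb{Z}/2) \to H_0(\td\setminus Z(f); \mathbb{Z}/2) \to H_0(\td; \mathbb{Z}/2) \to 0,
\]
whose consecutive terms have $\mathbb{Z}/2$-dimensions $d$, $k$, $r$, and $1$, and using that the last map is surjective, exactness yields $k = r - 1 + \rho$ where $\rho$ is the rank of the first arrow, constrained by $0 \le \rho \le d$. This is exactly $r-1 \le k \le r + d - 1$.

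For part (2), let $\Gamma_1, \ldots, \Gamma_{k'}$ enumerate the nodal components lying completely inside the open ball $B$ and set $Z_0 = \bigcup_j \Gamma_j$. Since the radius of $B$ is strictly less than the injectivity radius $\tfrac{1}{2}$ of the flat torus, $B$ is isometric to a Euclidean ball, and each $\Gamma_j$ is a compact $(d-1)$-submanifold of that ball, so Jordan--Brouwer separation applies. A nesting argument using the partial order $\Gamma_i \le \Gamma_j \iff \Gamma_i \subset \mathrm{inside}(\Gamma_j)$ shows that $B \setminus Z_0$ has exactly $k'+1$ connected components: one ``outer'' component reaching $\partial B$, and $k'$ ``inner'' components whose closures lie in $B$, one canonically associated to each $\Gamma_j$. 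The crucial step would be to identify each inner component $V$ with an actual nodal domain of $f$ lying completely inside $B$. Any component $\Gamma$ of $Z(f)$ that meets $V$ cannot touch $\partial V \subset Z_0$ (components of $Z(f)$ are pairwise disjoint closed sets), so by connectedness $\Gamma \subset V$; then $\Gamma$ lies in $B$, hence $\Gamma$ is one of the $\Gamma_j$, contradicting $V \cap Z_0 = \Empty$. Thus $V \cap Z(f) = \Empty$, so $V$ lies in a single nodal domain $U$; and a symmetric argument (any path inside $U$ leaving $V$ would have to cross $\partial V \subset Z(f)$) forces $U = V$. This yields an injection from $\{\Gamma_j\}$ into the nodal domains of $f$ lying completely inside $B$, hence $k' \le r'$.

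The principal technical hurdle in (1) is the clean set-up of the excision plus Thom identification, which rests on the tubular-neighborhood theorem made available by the stability hypothesis on $Z(f)$. In (2) the delicate step is ruling out that a component of $Z(f)$ which is \emph{not} entirely contained in $B$ might nevertheless slice an inner component $V$ of $B\setminus Z_0$ into smaller pieces; this is exactly what the pairwise-disjointness argument for connected components of $Z(f)$ rules out.
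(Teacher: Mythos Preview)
Your argument is correct and follows a genuinely different route from the paper's, which in both parts works with the Mayer--Vietoris sequence applied to the cover of the relevant space by $A=\{f\ne 0\}$ and a union $B$ of thin ``shells'' $\{|f|<\alpha\}$ built from a gradient-flow lemma; the component counts $r$, $k$, $2k$ for $A$, $B$, $A\cap B$ (respectively $s$, $k'$, $2k'$ in the ball) then feed into the tail of the sequence.

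For part~(1), your approach via the long exact sequence of the pair $(\td,\td\setminus Z(f))$ together with excision and the mod-$2$ Thom isomorphism is tidier: it bypasses the shell construction entirely, at the mild cost of invoking Thom (routine over $\mathbb{Z}/2$ once the tubular neighbourhood is in hand from stability). The rank bookkeeping is otherwise identical.

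For part~(2), your argument is in fact sharper than the paper's: the paper only shows each of the $k'$ inner regions \emph{contains} a nodal domain, whereas your disjoint-components argument shows each inner region \emph{is} a nodal domain. One caveat: the claim that $B\setminus Z_0$ has \emph{exactly} $k'+1$ components does not follow from Jordan--Brouwer and nesting alone without some homological input (an induction would need that cutting a connected open region along a closed connected hypersurface it contains yields exactly two pieces, which can fail when the region is not simply connected). However, your proof does not actually need the exact count. All you need are $k'$ \emph{distinct} inner components, and those are supplied directly: for each $\Gamma_j$ take the component $V_j$ of $B\setminus Z_0$ adjacent to $\Gamma_j$ on its Jordan--Brouwer inside; then $V_j\subset\text{inside}(\Gamma_j)\subset B$, and the $V_j$ are pairwise distinct since $V_i=V_j$ would force $V_j\subset\text{inside}(\Gamma_i)\cap\text{inside}(\Gamma_j)$ while $V_j$ meets every neighbourhood of $\Gamma_j$. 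Your identification of each such $V_j$ with a full nodal domain then gives $k'\le r'$.
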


\subsection{The number and sum of diameters of nodal components of trigonometric
polynomials}

We denote by $\trigs$ the linear space of trigonometric polynomials
on $\td$ of degree at most $D$:
\[
\trigs\coloneqq\Span\left\{ \cos\left(2\pi\lambda\cdot x\right),\sin\left(2\pi\lambda\cdot x\right):\lambda\in\zd,\left\Vert \lambda\right\Vert _{1}\le D\right\} .
\]

\begin{prop}
\label{prop:trig-polynomials-nodal-set-result}If $T\in\trigs$ has
a stable nodal set then $N\left(T\right)\ll D^{d}$ and $\sum_{\Gamma\in\Z\left(T\right)}\diam\Gamma\ll D^{d-1}$.
\end{prop}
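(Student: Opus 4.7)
The plan is to argue by induction on the dimension $d$, slicing $T$ by coordinate hyperplanes and exploiting the fact that for each $i$ and each $t \in \mathbb{T}$, the restriction $T_{t}^{(i)}(x) := T(x_{1},\ldots,x_{i-1},t,x_{i+1},\ldots,x_{d})$ still lies in $\mathcal{P}_{D}$ but on $\mathbb{T}^{d-1}$ and, by Sard's theorem, has a stable nodal set for almost every $t$. The base case $d=1$ is immediate: a trigonometric polynomial of degree $D$ on $\mathbb{T}$ has at most $2D$ zeros, and the diameter sum is trivially zero.

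For the sum-of-diameters bound I would begin from the coordinate projection inequality $\diam(\Gamma) \leq \sum_{i=1}^{d} \Length(\pi_{i}(\Gamma))$, which follows from the product structure of the torus metric. A Crofton-style rearrangement then gives
\[
\sum_{\Gamma \in \mathcal{Z}(T)} \Length\bigl(\pi_{i}(\Gamma)\bigr) \;=\; \int_{\mathbb{T}} \#\{\Gamma : \Gamma \cap \pi_{i}^{-1}(t) \ne \varnothing\}\,dt \;\leq\; \int_{\mathbb{T}} N\bigl(T_{t}^{(i)}\bigr)\,dt,
\]
because every component of $Z(T)$ that meets a slice contributes at least one component to that slice's nodal set. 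Invoking the inductive hypothesis $N(T_{t}^{(i)}) \ll D^{d-1}$ for a.e.\ $t$ and summing over $i=1,\ldots,d$ yields $\sum_{\Gamma} \diam(\Gamma) \ll D^{d-1}$.

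For $N(T) \ll D^{d}$ I would split $\mathcal{Z}(T)$ according to whether $\pi_{1}(\Gamma)$ wraps the torus. Wrapping components (those with $\pi_{1}(\Gamma) = \mathbb{T}$) satisfy $\diam(\Gamma) \geq \diam(\mathbb{T}) = 1/2$, since $\pi_{1}$ is $1$-Lipschitz; the sum-of-diameters bound just established then forces at most $\ll D^{d-1}$ of them. A non-wrapping component has $\pi_{1}(\Gamma)$ a proper arc whose two endpoints are extrema, hence critical values, of the smooth function $\pi_{1}|_{\Gamma}$; it therefore contains at least two points where $\nabla T$ is parallel to $e_{1}$, that is, simultaneous zeros of the system $T = \partial_{x_{2}} T = \cdots = \partial_{x_{d}} T = 0$. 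Viewing $T$ as a Laurent polynomial in the variables $z_{j}=e^{2\pi i x_{j}}$, its Newton polytope is contained in the cross-polytope $\{\alpha \in \mathbb{Z}^{d} : \|\alpha\|_{1} \leq D\}$, and the Bernstein--Kushnirenko bound on the number of isolated common zeros in $(\mathbb{C}^{\ast})^{d}$ gives $\ll D^{d}$; this dominates the wrapping contribution, so $N(T) \ll D^{d}$.

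The main obstacle is ensuring isolation of the critical points in the non-generic situation, since the Bezout-type bound only counts isolated common zeros. I would handle this by perturbing within the finite-dimensional space $\mathcal{P}_{D}$: choose $T' \in \mathcal{P}_{D}$ arbitrarily close to $T$ such that $T'$ still has a stable nodal set (preserved under small perturbations within $\mathcal{P}_{D}$, by Remark \ref{rem:about-nodal-stability}) while the critical-point system becomes transverse. By Proposition \ref{prop:smooth-perturbation}, $T'$ has the same number of nodal components as $T$ and essentially the same diameter sum, so bounding $N(T')$ and $\sum_{\Gamma'} \diam(\Gamma')$ suffices. An alternative fix is to replace $e_{1}$ by a generic direction $v$ and count tangencies $\nabla T \parallel v$ on $Z(T)$, which again reduces to a Bezout-style count for $d$ trigonometric polynomials of degree at most $D$.
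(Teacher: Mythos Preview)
Your proposal is essentially correct and shares the paper's overall architecture---a Crofton-type slicing argument for the diameter sum, an algebraic zero-count for $N(T)$, and a perturbation within $\mathcal{P}_D$ to reach a generic situation---but the implementation differs in several places. The paper embeds $\mathbb{T}^d$ into the ``round torus'' in $\mathbb{R}^{2d}$ and slices by affine hyperplanes there (pulling back to level sets of $\cos(2\pi x_j)$ or $\sin(2\pi x_j)$), whereas you slice directly by coordinate hyperplanes $\{x_i=t\}$ and invoke Sard on $\pi_i|_{Z(T)}$; your route is more direct and avoids the auxiliary embedding. For $N(T)$ the paper counts critical points of $T$ (one per nodal domain) via Proposition~\ref{prop:nodal-components-similar-to-nodal-domains}, while you count tangencies of $Z(T)$ with the $e_1$-direction via the wrapping/non-wrapping split; both reduce to a B\'ezout-type bound, with the paper using homogenization and classical B\'ezout and you using Laurent polynomials and Bernstein--Kushnirenko. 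The induction on $d$ in your argument is implicit in the paper as well (Lemma~\ref{lem:improved-regularity} feeds the $(d{-}1)$-dimensional $N$-bound into the slice integral).

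The one place where the paper is more careful is the genericity step: you assert that a small perturbation makes the tangency system $T'=\partial_{x_2}T'=\cdots=\partial_{x_d}T'=0$ have only isolated solutions in $(\mathbb{C}^\ast)^d$, but since the coefficients of the $\partial_{x_j}T'$ are \emph{determined} by those of $T'$, this is not the off-the-shelf BKK genericity and needs a witness. The paper handles the analogous issue by explicitly exhibiting a polynomial (essentially $\sum_j \sin(2\pi D x_j)+A$ with $A>d$) satisfying all the required coalgebraic regularity conditions, which then forces those conditions to be dense. Your argument would be complete once you supply such an example (the paper's works for your system too: the tangency equations force $\cos(2\pi D x_j)=0$ for $j\ge 2$, leaving a one-variable equation with finitely many, in fact zero, solutions).
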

The first result, the bound on the number of nodal components of $T$,
is a trigonometric version of a classical bound on the sum of the
Betti numbers of the nodal hypersurface of a polynomial due to Oleinik
and Petrovsky, Milnor, and Thom. We obtain this by result by algebraizing
(that is, writing the trigonometric polynomials as algebraic ones)
and applying elimination theory and Bézout's theorem to count the
number of critical points. This result is obtained along the way of
proving the second result, the bound on the sum of diameters, which
is shown using simple integral-geometric tools: The diameter of $\Gamma\in\Z\left(T\right)$
is comparable to its average width, which may be computed by measuring
the set of hypersurfaces that intersect it (a Crofton-type formula).

In \cite{Nazarov-Sodin-spherical-harmonics}, a different approach
is taken to bound the sum of diameters of nodal components in dimension
$d=2$ - it is bounded by a well-known estimate on the total length
of the nodal set. However, although this estimate may be generalized
to higher dimensions (where length is replaced by hypersurface volume),
when $d>2$ it fails to bound the sum of diameters; a nodal component
might be a long, thin ``noodle'' having large diameter and small
hypersurface volume.

It is likely that the stability condition in Proposition \ref{prop:trig-polynomials-nodal-set-result}
may be lifted, but we assume it as it makes the proof a little simpler,
and for $f_{L}$, this condition is almost surely satisfied (see Proposition
\ref{prop:a.s.-stable-nodal-set} in the next section).

\paragraph*{Algebraic background.}

Given polynomials $P_{1},\ldots,P_{n}\colon\c^{k}\to\c$, denote by
$Z\left(P_{1},\ldots,P_{n}\right)$ their common zero set. Subsets
of $\c^{k}$ of this form are called \emph{algebraic}, and their complements
\emph{coalgebraic}. The family of algebraic sets (thus also coalgebraic
sets) is closed under finite unions and finite intersections, and
any coalgebraic set is either empty or dense in $\c^{k}$ (since a
nonzero polynomial cannot vanish in an open set).

Any homogeneous polynomial of positive degree $P\colon\c^{n+1}\to\c$
has a trivial zero at the origin; other zeroes are called \emph{nontrivial
zeroes}. Note that if $P\left(z_{0,}\ldots,z_{n}\right)=0$, then
$P\left(\lambda z_{0},\ldots,\lambda z_{n}\right)=0$ for any $\lambda\in\c$,
so nontrivial zeroes extend at least to their complex spans, called
\emph{solution rays}.

In what follows, we consider polynomials in which some (possibly all)
of the coefficients are \emph{indeterminate}; that is, they are parameters
which may be assigned complex values, and on which we may impose conditions.
When considering polynomials with indeterminate coefficients, they
have a \emph{formal degree}, that is, the degree of the polynomial
with all the coefficients written explicitly; the actual degree may
become lower if some coefficients become zero after assigning values.

We recall two classical theorems. See, for instance, \cite[Sections 80 and 83]{van-der-Waerden-book-Modern-Algebra-vol-II}.
\begin{itemize}
\item \textbf{The fundamental theorem of elimination theory:} Given a system
of homogeneous polynomials of positive formal degree, the existence
of a nontrivial common zero is an algebraic condition on the coefficients.
\item \textbf{Bézout's theorem:} Given $n$ homogeneous polynomials $P_{1},\ldots,P_{n}\colon\c^{n+1}\to\c$,
if they have finitely many common solution rays, then the number of
common solution rays is at most $\prod_{i=1}^{n}\deg P_{i}$.
\end{itemize}
It is easy see that Bézout's\textbf{ }theorem continues to hold under
the weakened hypothesis that the polynomials have at most countably
many common solution rays; we will later name this result ``Bézout's
theorem'' as well.

\paragraph*{Coalgebraic condition for finiteness.}

We begin by presenting a coalgebraic condition for the finiteness
of the set of solutions to a system of polynomials which we will use
later in a few settings.

First, we define the \emph{homogenization }of a polynomial $P\colon\c^{n}\to\c$
of formal degree $D$ as the homogeneous polynomial $\tilde P\left(z_{0},\ldots,z_{n}\right)\coloneqq z_{0}^{D}P\left(\frac{z_{1}}{z_{0}},\ldots,\frac{z_{n}}{z_{0}}\right)$.
Note that plugging in $z_{0}=1$ yields the original polynomial, so
any zero $\left(\zeta_{1},\ldots,\zeta_{n}\right)$ of $P$ yields
a solution ray of $\tilde P$: $\Span_{\c}\left\{ \left(1,\zeta_{1},\ldots,\zeta_{n}\right)\right\} $.
However, $\tilde P$ may have more solution rays not obtained this
way - solution rays where $z_{0}=0$.

Second, we recall that the \emph{Jacobian} of $n$ polynomials $P_{1},\ldots,P_{n}\colon\c^{n}\to\c$
is the polynomial given by $\det\left(\frac{\partial P_{i}}{\partial z_{j}}\right)_{1\le i,j\le n}$.
In case that the Jacobian is nonzero wherever $P_{1},\ldots,P_{n}$
are zero, we get by the implicit function theorem that there are at
most countably many common zeroes of $P_{1},\ldots,P_{n}$.
\begin{condition}[Condition for finiteness]
\label{cond:finiteness}Polynomials $P_{1},\ldots,P_{n}\colon\c^{n}\to\c$
are said to satisfy the \emph{condition for finiteness} if both of
the following hold:
\begin{enumerate}
\item The homogenizations $\tilde P_{1},\ldots,\tilde P_{n}$ have no common
nontrivial zero with $z_{0}=0$.
\item At any common zero of $P_{1},\ldots,P_{n}$, their Jacobian is nonzero.
\end{enumerate}
\end{condition}
\begin{lem}
\label{lem:cond-finiteness-results}Condition \ref{cond:finiteness}
is a coalgebraic condition on the coefficients of the polynomials
$P_{1},\ldots,P_{n}$, and whenever it holds, the number of common
zeroes of $P_{1},\ldots,P_{n}$ is finite and bounded by $\prod_{i=1}^{n}\deg P_{i}$.\end{lem}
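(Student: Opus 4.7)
The plan is to decouple the two assertions of the lemma and, in both cases, reduce everything to the two classical theorems already quoted: the fundamental theorem of elimination theory (for the coalgebraic part) and B\'ezout's theorem (for the count). The key manipulation is to recast the affine Jacobian-non-vanishing condition as the non-existence of a common nontrivial zero of the enlarged homogeneous system $\tilde P_{1},\ldots,\tilde P_{n},\tilde J$ on $\c^{n+1}$; once this reformulation is in place, both halves of the lemma become mostly mechanical applications of the two classical theorems.

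For the coalgebraic statement I would treat the two conditions separately and then intersect. Substituting $z_{0}=0$ into $\tilde P_{i}$ leaves exactly the top-degree homogeneous component of $P_{i}$, a polynomial whose coefficients are a subset of those of $P_{i}$; Condition~1 then says that this system of $n$ homogeneous polynomials in $n$ variables has no common nontrivial zero, which is a coalgebraic condition on the coefficients by the fundamental theorem of elimination theory. For Condition~2 I would first establish that \emph{under} Condition~1 it is equivalent to the reformulated statement that $\tilde P_{1},\ldots,\tilde P_{n},\tilde J$ have no common nontrivial zero in $\c^{n+1}$: the forward direction uses Condition~1 to scale any such nontrivial zero to $z_{0}=1$, reducing $\tilde J$ to $J$ and contradicting Condition~2; the backward direction lifts any affine common zero $\zeta$ of $P_{1},\ldots,P_{n}$ to $(1,\zeta)$, at which $\tilde J$ equals $J(\zeta)$. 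Since the coefficients of $\tilde J$ are polynomials in the coefficients of the $P_{i}$ (via the determinant formula for the Jacobian), the fundamental theorem of elimination theory applied to these $n+1$ homogenizations again gives coalgebraicity, and intersecting with the coalgebraic Condition~1 yields the claim.

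For the counting statement, Condition~1 ensures that every common nontrivial zero of $\tilde P_{1},\ldots,\tilde P_{n}$ has $z_{0}\ne 0$, so solution rays correspond bijectively---via the normalization $z_{0}=1$---to common zeros of $P_{1},\ldots,P_{n}$ in $\c^{n}$. Condition~2, via the implicit function theorem noted in the text just before the statement of the condition, forces this affine zero set, and hence the set of solution rays, to be at most countable. The extended B\'ezout theorem then bounds the number of solution rays by $\prod_{i=1}^{n}\deg\tilde P_{i}$. Finally one checks that under Condition~1 no top-degree homogeneous component of any $P_{i}$ can vanish identically---otherwise the remaining $n-1$ homogeneous polynomials in $n$ variables would still share a nontrivial common zero on $\{z_{0}=0\}$ by a dimension count---so $\deg\tilde P_{i}$ coincides with the formal degree $\deg P_{i}$ appearing in the bound.

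The step I expect to require the most care is the equivalence between the affine Jacobian condition and the projective $(n+1)$-polynomial condition; in particular, I need to confirm that the coefficients of $\tilde J$ depend \emph{polynomially}---not merely algebraically---on the coefficients of the $P_{i}$, so that elimination theory yields a genuine coalgebraic condition on the original data. Everything else reduces to bookkeeping around the projective/affine transition and to quoting the two classical theorems.
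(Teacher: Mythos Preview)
Your proposal is correct and follows essentially the same route as the paper: reformulate Condition~2, under Condition~1, as the projective statement that $\tilde P_{1},\ldots,\tilde P_{n},\tilde J$ have no common nontrivial zero, then invoke elimination theory for coalgebraicity and the extended B\'ezout for the count via the bijection between solution rays with $z_{0}\ne 0$ and affine zeros. The paper does exactly this, only more tersely---it does not spell out Condition~1's coalgebraicity separately, nor the forward/backward equivalence argument, nor your final remark that Condition~1 forces each top-degree part to be nonzero (the paper simply reads $\deg P_{i}$ as the formal degree, so that last step, while correct, is not needed).
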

\begin{proof}
Let $P_{n+1}$ be the Jacobian of $P_{1},\ldots,P_{n}$. Replacing
the second part of the condition with ``$\tilde{P_{1}},\ldots,\tilde{P_{n}},\tilde{P_{n+1}}$
have no nontrivial common zero'' yields exactly the same condition,
since by the first part of the condition, there cannot exist such
a solution ray with $z_{0}=0$, and solution rays with $z_{0}\ne0$
are in correspondence with zeroes of $P_{1},\ldots,P_{n},P_{n+1}$.
By the fundamental theorem of elimination theory, this is a coalgebraic
condition on the coefficients of $P_{1},\ldots,P_{n}$ (and also $P_{n+1}$,
but the coefficients of $P_{n+1}$ are themselves polynomials of the
coefficients of $P_{1},\ldots,P_{n}$). Bézout's theorem then gives
the required bound for the number of common solution rays of $\tilde{P_{1}},\ldots,\tilde{P_{n}}$
with $z_{0}\ne0$, which are in correspondence with the zeroes of
$P_{1},\ldots,P_{n}$.
\end{proof}

\paragraph*{Algebraization of trigonometric polynomials.}

Laplacian eigenfunctions on the torus are trigonometric polynomials,
so it would be fruitful to consider trigonometric polynomials in general
in order to analyze them. Recall that trigonometric polynomials are
linear combinations of trigonometric monomials - cosines or sines
of $2\pi\lambda\cdot x$, where $\lambda\in\zd$. The degree of a
trigonometric monomial is $\left\Vert \lambda\right\Vert _{1}=\sum_{i=1}^{d}\left|\lambda_{i}\right|$,
and the degree of a trigonometric polynomial is the maximal degree
among its monomials. The polynomial is said to be homogeneous when
its monomials all have the same degree.

We would like to use algebraic tools, such as Bézout's theorem, to
analyze trigonometric polynomials. Therefore, we \emph{algebraize}
them - convert them to simple algebraic polynomials in a different
space.

In the following definition, we denote by $x_{1},\ldots,x_{d}$ the
coordinates of $\td$ and by $c_{1},s_{1},\ldots,c_{d},s_{d}$ the
coordinates of $\R^{2d}$.
\begin{defn}[algebraization of trigonometric polynomials]
$ $
\begin{enumerate}
\item Given a trigonometric polynomial $T\colon\td\to\R$, its \emph{algebraization}
is the polynomial $P\colon\R^{2d}\to\R$ such that $\deg P=\deg T$
and
\[
T\left(x_{1},\ldots,x_{d}\right)=P\left(\cos\left(2\pi x_{1}\right),\sin\left(2\pi x_{1}\right),\ldots,\cos\left(2\pi x_{d}\right),\sin\left(2\pi x_{d}\right)\right).
\]

\item Let $T_{1},\ldots,T_{r}$ be a system of $r$ trigonometric polynomials
$\td\to\R$. The \emph{algebraization }of this system is a system
of $r+d$ polynomials $\R^{2d}\to\R$, the first $r$ being the algebraizations
of $T_{1},\ldots,T_{r}$ and the last $d$ being the polynomials $c_{i}^{2}+s_{i}^{2}-1$
for $i=1,\ldots,d$.
\end{enumerate}
\end{defn}
\begin{rem*}
Given a trigonometric polynomial $T$, it is easy to see that its
algebraization $P$ exists and is unique. Moreover, the coefficients
of $P$ are linear combinations of the coefficients of $T$, and if
$T$ is homogeneous then $P$ is homogeneous. Regarding systems of
polynomials, it is easy to see that the assignments $c_{i}=\cos\left(2\pi x_{i}\right)$
and $s_{i}=\sin\left(2\pi x_{i}\right)$ for $i=1,\dots,d$ yield
a bijective correspondence between the set of zeroes of a system of
trigonometric polynomials and the set of (real) zeroes of the algebraized
system.
\end{rem*}

\paragraph*{Coalgebraic conditions for regularity of trigonometric polynomials.}

Let $T$ be a real trigonometric polynomial in $d$ variables $x_{1},\ldots,x_{d}$.
We present two regularity conditions, viewed as conditions on the
coefficients of $T$.
\begin{condition}[Condition for nodal regularity]
\label{cond:nodal-regularity}The homogenization of the algebraization
of the system $T,\frac{\partial T}{\partial x_{1}},\ldots,\frac{\partial T}{\partial x_{d}}$
has no common nontrivial complex zeroes.
\end{condition}
Condition \ref{cond:nodal-regularity} is coalgebraic by the fundamental
theorem of elimination theory, and it implies that $T$ has a stable
nodal set (recall Definition \ref{def:stable-nodal-set}).
\begin{condition}[Condition for critical set regularity]
\label{cond:critical-regularity}The algebraization of the system
$\frac{\partial T}{\partial x_{1}},\ldots,\frac{\partial T}{\partial x_{d}}$
satisfies the \emph{condition for finiteness} (Condition \ref{cond:finiteness}).
\end{condition}
By Lemma \ref{lem:cond-finiteness-results}, Condition \ref{cond:critical-regularity}
is coalgebraic and it implies that $\left\{ \nabla T=0\right\} $
is a finite set.
\begin{condition}[Condition for full regularity]
\label{cond:regular}Suppose $d>1$. \emph{$T$} is said to be \emph{fully
regular} if:
\begin{enumerate}
\item $T$ satisfies both the condition for nodal regularity and the condition
for critical set regularity.
\item For all $j=1,\ldots,d$, the restriction of $T$ to the hyperplane
$\left\{ x_{j}=0\right\} $, viewed as a trigonometric polynomial
in the remaining $d-1$ variables, satisfies both the condition for
nodal regularity and the condition for critical set regularity.
\end{enumerate}
\end{condition}
It is clear that Condition \ref{cond:regular} is coalgebraic in the
coefficients of $T$. We will later need our trigonometric polynomial
$T$ to have regular restrictions to almost any hyperplane of the
form $\left\{ x_{j}=t\right\} $, not just $\left\{ x_{j}=0\right\} $.
The following lemma shows that the full regularity condition is enough.
\begin{lem}
\label{lem:improved-regularity}Suppose $T$ is fully regular. Let
$1\le j\le d$. For all but finitely many values of $t\in\t$, the
restriction of $T$ to the hyperplane $\left\{ x_{j}=t\right\} $,
viewed as a trigonometric polynomial in the remaining $d-1$ variables,
satisfies both the condition for nodal regularity and the condition
for critical set regularity.\end{lem}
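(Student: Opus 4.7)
The plan is to view both regularity conditions as coalgebraic conditions on the coefficient vector of the restricted polynomial $T_t := T|_{x_j = t}$, and to track how this coefficient vector varies algebraically with $t$, so that the set of bad $t$ becomes a proper algebraic subset of the unit circle.

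First, I would observe that restricting $T$ to the hyperplane $\{x_j = t\}$ produces a trigonometric polynomial $T_t$ in $d-1$ variables whose coefficients, by the sum-of-angles formulas applied to each monomial $\cos(2\pi\lambda\cdot x)$ or $\sin(2\pi\lambda\cdot x)$, are $\R$-linear combinations of $\cos(2\pi k t)$ and $\sin(2\pi k t)$ for finitely many integers $k$. Substituting $u = \cos(2\pi t)$, $v = \sin(2\pi t)$, each such coefficient becomes a polynomial in $u, v$. This yields a polynomial map $\Psi \colon \c^{2} \to \c^{N}$, where $N$ is the number of coefficients of $T_t$, such that the coefficient vector of $T_t$ equals $\Psi(\cos 2\pi t, \sin 2\pi t)$.

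Next, I would note that the set $V_{\mathrm{bad}} \subset \c^{N}$ of coefficient vectors for which $T_t$ fails either Condition \ref{cond:nodal-regularity} or Condition \ref{cond:critical-regularity} is algebraic: the former by the fundamental theorem of elimination theory, the latter by Lemma \ref{lem:cond-finiteness-results}, and finite unions of algebraic sets are algebraic. Since preimages of algebraic sets under polynomial maps are algebraic, $\Psi^{-1}(V_{\mathrm{bad}})$ is an algebraic subset of $\c^{2}$. Intersecting with the complex curve $C = \{(u,v) \in \c^{2} : u^{2}+v^{2} = 1\}$ --- which is irreducible since $u^{2}+v^{2}-1$ is an irreducible polynomial over $\c$ --- yields an algebraic subset $B \subset C$ consisting of the bad points.

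Finally, I would invoke full regularity: by the second part of Condition \ref{cond:regular}, the restriction $T|_{x_j = 0}$ satisfies both regularity conditions, so $\Psi(1, 0) \notin V_{\mathrm{bad}}$, placing $(1,0)$ outside $B$. Hence $B$ is a \emph{proper} algebraic subset of the irreducible $1$-dimensional curve $C$, so it is zero-dimensional and therefore finite. Since $t \mapsto (\cos 2\pi t, \sin 2\pi t)$ is a bijection from $\t$ onto $C \cap \R^{2}$, the pullback of $B$ to $\t$ is finite, giving the lemma. I expect no serious obstacle --- the argument is essentially formal --- and the only careful bookkeeping is verifying that the coefficients of $T_t$ are genuine polynomials in $(u, v)$ of bounded degree (rather than, say, Laurent expressions), which is immediate from the sum-of-angles expansion.
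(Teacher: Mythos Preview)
Your proposal is correct and follows essentially the same approach as the paper: express the coefficients of $T_t$ as polynomials in $(\cos 2\pi t,\sin 2\pi t)$, note that the failure of the two regularity conditions is an algebraic condition on these coefficients, and use the known regularity at $t=0$ (from full regularity) to conclude the bad set is proper, hence finite. The only cosmetic difference is in the last step: the paper picks a single nonvanishing polynomial $P_1(\kappa,\sigma)$ from the defining system, composes with $t\mapsto(\cos 2\pi t,\sin 2\pi t)$ to obtain a nonzero analytic function $Q$ on $\t$, and uses that such a function has finitely many zeros; you instead invoke irreducibility of the curve $\{u^2+v^2=1\}$ to force a proper algebraic subset to be finite.
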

\begin{proof}
For any $t\in\t$, let $T_{t}$ be the restriction of $T$ to the
hyperplane $H\coloneqq\left\{ x_{j}=t\right\} $.

Put $\kappa\coloneqq\cos\left(2\pi t\right)$ and $\sigma\coloneqq\sin\left(2\pi t\right)$.
In what follows, $\kappa$ and $\sigma$ are treated as indeterminate
parameters, like $t$. The gradient $\nabla T_{t}$ comprises $d-1$
trigonometric polynomials on $\t^{d-1}$ (since the coordinate $x_{j}$
is omitted in the restriction to $H$), so the algebraization of the
system $T_{t},\nabla T_{t}$ is a system of $2d-1$ polynomials in
$2d-2$ variables ($c_{1},s_{1},\ldots,c_{d},s_{d}$ without $c_{j},s_{j}$)
whose coefficients are polynomial expressions in the coefficients
of $T$ (which are \emph{not }indeterminate) and the parameters $\kappa$
and $\sigma$.

Condition \ref{cond:nodal-regularity} and Condition \ref{cond:critical-regularity}
are coalgebraic; therefore, there exists a system of polynomials $P_{1}\left(\kappa,\sigma\right),P_{2}\left(\kappa,\sigma\right),\ldots$
that vanishes whenever $\kappa$ and $\sigma$ are assigned values
for which the conditions are not satisfied. At least one of these
polynomials (w.l.o.g.\ $P_{1}$) is not identically zero, because
both conditions are satisfied for $t=0$ (i.e.\ for $\kappa=1$ and
$\sigma=0$).

Consider the function $Q\left(t\right)\coloneqq P_{1}\left(\cos\left(2\pi t\right),\sin\left(2\pi t\right)\right)$.
Whenever $Q\left(t\right)\ne0$, $t$ is a value for which $T_{t}$
satisfies both conditions. We also know that $Q\left(0\right)\ne0$,
so $Q$ is an analytic function that isn't identically zero. Therefore,
$Q$ has at most finitely many zeroes in $\t$, proving the lemma.
\end{proof}

\paragraph*{Abundance of fully regular trigonometric polynomials.}

Since the condition for full regularity is coalgebraic, it is enough
to show it is nonempty to see that it is, in fact, dense in $\trigs$.
This is the content of the following:
\begin{lem}
There exists a $T\in\trigs$ that is fully regular.\end{lem}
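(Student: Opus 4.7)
The plan is to exhibit one explicit fully regular $T \in \trigs$; once a single such $T$ is in hand, the coalgebraic nature of Condition \ref{cond:regular} promotes it automatically to a dense subset of $\trigs$. My candidate is the separable polynomial
\[
T(x_1, \ldots, x_d) = \sum_{i=1}^d \alpha_i \cos(2\pi x_i),
\]
with $\alpha_1, \ldots, \alpha_d$ nonzero reals to be chosen generically at the end. This $T$ lies in $\trigs$ whenever $D \geq 1$, and its algebraic structure is as simple as possible: the algebraization of $T$ is $\sum_i \alpha_i c_i$, and the algebraization of $\partial T / \partial x_j$ is $-2\pi \alpha_j s_j$.

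For nodal regularity (Condition \ref{cond:nodal-regularity}), the homogenized system to consider is $\sum \alpha_i c_i = 0$, $s_j = 0$ for all $j$, and $c_i^2 + s_i^2 - z_0^2 = 0$ for all $i$. The equations $s_j = 0$ combined with $c_i^2 = z_0^2$ either force the trivial zero (when $z_0 = 0$) or, after rescaling to $z_0 = 1$, reduce to $c_i = \pm 1$ with $\sum \alpha_i c_i = 0$. Each of the $2^d$ possible sign patterns yields one nontrivial linear relation on $\alpha$ that must be excluded. For critical set regularity (Condition \ref{cond:critical-regularity}), the same analysis locates the affine common zeros of the algebraized gradient system at the $2^d$ points where $s_i = 0$ and $c_i = \pm 1$, and the Jacobian at each such point is a block-antidiagonal matrix whose determinant is a nonzero scalar multiple of $\prod_j \alpha_j$, which is nonzero as long as every $\alpha_j \neq 0$. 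The ``at infinity'' portion ($z_0 = 0$) again collapses to the trivial zero.

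For the hyperplane restriction clause of Condition \ref{cond:regular}, the restriction $T|_{\{x_j = 0\}} = \alpha_j + \sum_{i \neq j} \alpha_i \cos(2\pi x_i)$ is again separable and of the same shape (with one extra constant term), so the same verification goes through verbatim in $d - 1$ variables; the only new linear relations on $\alpha$ to avoid are of the form $\alpha_j + \sum_{i \neq j} \epsilon_i \alpha_i = 0$. Altogether, full regularity of $T$ is reduced to keeping $\alpha$ outside a finite union of proper hyperplanes in $\R^d$, which any generic choice satisfies. I do not anticipate a serious obstacle: the separable structure of $T$ decouples the algebraization coordinate-by-coordinate, making every projective, affine, and Jacobian computation block-diagonal. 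The only minor subtlety is confirming that each excluded linear relation is genuinely nonzero, which is immediate from the explicit form of each sign sum.
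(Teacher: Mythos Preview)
Your argument has a genuine gap tied to the \emph{degree} of the example. The point of this lemma is to show that the coalgebraic ``full regularity'' condition on $\trigs$ is nonempty (hence dense). For that condition to be coalgebraic in the coefficients of $T\in\trigs$ via the fundamental theorem of elimination theory, the homogenizations must be taken with respect to the \emph{formal} degree $D$. Your $T=\sum_i\alpha_i\cos(2\pi x_i)$ has actual degree $1$, so when the coefficients of $T$ are viewed as indeterminates in $\trigs$ and then specialised, the algebraizations of $T$ and of each $\partial T/\partial x_j$ have formal degree $D$ but actual degree $1$; their homogenizations are therefore $z_0^{D-1}\sum_i\alpha_i c_i$ and $-2\pi\alpha_j\,z_0^{D-1}s_j$. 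Setting $z_0=0$ kills all of these identically whenever $D\ge2$, leaving only the quadrics $c_i^2+s_i^2=0$, which have nontrivial complex solutions (e.g.\ $c_1=i$, $s_1=1$, all other coordinates zero). So Condition~\ref{cond:nodal-regularity} fails, and likewise the ``no zeros at infinity'' half of Condition~\ref{cond:finiteness} inside Condition~\ref{cond:critical-regularity}. Your block-diagonal Jacobian and sign-sum computations are fine, but they only address the affine ($z_0=1$) locus; it is precisely the projective boundary $z_0=0$ that your degree-$1$ choice cannot control.

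This is exactly why the paper's example $T=\sum_{j}\sin(2\pi D x_j)+A$ is taken of degree \emph{exactly} $D$: the algebraizations $S_D,C_D$ are then honestly homogeneous of degree $D$, and the identity $C_D^2+S_D^2=(c^2+s^2)^D$ is what forces $c_j=s_j=0$ at $z_0=0$. The easiest repair of your approach is to replace $\cos(2\pi x_i)$ by $\cos(2\pi D x_i)$ (or $\sin(2\pi D x_i)$), which keeps the separable, coordinate-by-coordinate structure you exploit while restoring the correct formal degree; the affine analysis is then essentially the same, and the $z_0=0$ analysis goes through via the Chebyshev-type identities the paper records.
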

\begin{proof}
We will show that
\[
T\left(x_{1},\dots,x_{d}\right)\coloneqq\sum_{j=1}^{d}\sin\left(2\pi Dx_{j}\right)+A,
\]
where $A>d$ is constant, satisfies the condition for nodal regularity
and the condition for critical set regularity. Since the restriction
of $T$ to a hyperplane of the form $\left\{ x_{j}=0\right\} $ takes
exactly the same form as $T$ with dimension smaller by $1$, this
is enough to imply that $T$ is fully regular.

Denote by $C_{D}\left(c,s\right)$ and $S_{D}\left(c,s\right)$ the
algebraizations of $\cos\left(2\pi Dx\right)$ and $\sin\left(2\pi Dx\right)$.
These are homogeneous polynomials of degree $D$ for which it is a
simple exercise to prove:
\selectlanguage{british}%
\begin{enumerate}[labelindent=\parindent,leftmargin=*,label=({\alph*})]
\item \foreignlanguage{american}{\label{enu:C^2+S^2=00003Dc^2+s^2}$\left(C_{D}\left(c,s\right)\right)^{2}+\left(S_{D}\left(c,s\right)\right)^{2}=\left(c^{2}+s^{2}\right)^{D}$.}
\selectlanguage{american}%
\item \label{enu:C=00003DS=00003D0->c=00003Ds=00003D0}The only solution
in $c,s\in\c$ of $C_{D}\left(c,s\right)=S_{D}\left(c,s\right)=0$
is $c=s=0$.
\item \label{enu:determinant-property}$\det\left(\begin{array}{cc}
\frac{\partial C_{D}}{\partial c}\left(c,s\right) & \frac{\partial C_{D}}{\partial s}\left(c,s\right)\\
c & s
\end{array}\right)=D\,S_{D}\left(c,s\right)$.
\end{enumerate}
\selectlanguage{american}%
The algebraization of $T,\nabla T$ is the following system of $2d+1$
polynomials in $2d$ variables $c_{1},s_{1},\dots,c_{d},s_{d}$:
\begin{equation}
\begin{array}{ccc}
\sum_{j=1}^{d}S_{D}\left(c_{j},s_{j}\right)+A, & \underbrace{2\pi D\,C_{D}\left(c_{j},s_{j}\right)}_{1\le j\le d}, & \underbrace{c_{j}^{2}+s_{j}^{2}-1}_{1\le j\le d}\end{array}\label{eq:nodal-regularity-system}
\end{equation}

The condition for nodal regularity (Condition \ref{cond:nodal-regularity})
requires this system to have no common nontrivial zeroes after homogenization.
We introduce a homogenizing variable $z_{0}$, and split to two cases:
$z_{0}=0$ and $z_{0}\ne0$. Since $S_{D}$ and $C_{D}$ are already
homogeneous of degree $D$, letting $z_{0}=0$, system (\ref{eq:nodal-regularity-system})
becomes:
\begin{equation}
\begin{array}{ccc}
\sum_{j=1}^{d}S_{D}\left(c_{j},s_{j}\right), & \underbrace{2\pi D\,C_{D}\left(c_{j},s_{j}\right)}_{1\le j\le d}, & \underbrace{c_{j}^{2}+s_{j}^{2}}_{1\le j\le d}\end{array}\label{eq:nodal-regularity-system-homogeneous}
\end{equation}

If $c_{1},s_{1},\ldots,c_{d},s_{d}$ is any zero of the system (\ref{eq:nodal-regularity-system-homogeneous}),
then $C_{D}\left(c_{j},s_{j}\right)=0$ and $c_{j}^{2}+s_{j}^{2}=0$
for $j=1,\ldots,d$, so by property \ref{enu:C^2+S^2=00003Dc^2+s^2}
above, $S_{D}\left(c_{j},s_{j}\right)=0$; by property \ref{enu:C=00003DS=00003D0->c=00003Ds=00003D0},
$c_{j}=s_{j}=0$ for $j=1,\ldots,d$, i.e.\ the zero must be trivial.

The other case is $z_{0}\ne0$, and it suffices to search for solutions
with $z_{0}=1$, i.e.\ solutions to the original system (\ref{eq:nodal-regularity-system}).
If $c_{1},s_{1},\ldots,c_{d},s_{d}$ is any zero of the system (\ref{eq:nodal-regularity-system}),
then $C_{D}\left(c_{j},s_{j}\right)=0$ and $c_{j}^{2}+s_{j}^{2}=1$
for $j=1,\ldots,d$. By property \ref{enu:C^2+S^2=00003Dc^2+s^2},
$\left|S_{D}\left(c_{j},s_{j}\right)\right|=1$ for $j=1,\ldots,d$.
But then $\sum_{j=1}^{d}S_{D}\left(c_{j},s_{j}\right)+A$ cannot be
zero, since $A>d$. Therefore there are no solutions with $z_{0}\ne0$.

To check the condition for critical set regularity (Condition \ref{cond:critical-regularity}),
we first write the polynomials in the algebraization of $\nabla T$
(as in system (\ref{eq:nodal-regularity-system}), excluding the first
polynomial) in the following order:
\begin{equation}
\begin{array}{ccccc}
2\pi D\,C_{D}\left(c_{1},s_{1}\right), & c_{1}^{2}+s_{1}^{2}-1, & \ldots, & 2\pi D\,C_{D}\left(c_{d},s_{d}\right), & c_{d}^{2}+s_{d}^{2}-1\end{array}\label{eq:algebraization-of-gradient}
\end{equation}

This system should satisfy the condition for finiteness (Condition
\ref{cond:finiteness}). We have already seen that its homogenization
has no zero with $z_{0}=0$, and it remains to check that it has no
zero common with its Jacobian. This Jacobian may be computed using
property \ref{enu:determinant-property}, and it equals $\left(4\pi D^{2}\right)^{d}\prod_{j=1}^{d}S_{D}\left(c_{j},s_{j}\right)$.
Thus, for any such common zero $\left(c_{1},s_{1},\ldots,c_{d},s_{d}\right)$,
there exists some $j$ such that $S_{D}\left(c_{j},s_{j}\right)=0$,
but also $C_{D}\left(c_{j},s_{j}\right)=0$ and $c_{j}^{2}+s_{j}^{2}=1$;
this is impossible by property \ref{enu:C^2+S^2=00003Dc^2+s^2}.
\end{proof}

\paragraph*{Proof of the main proposition for fully regular $T$.}

We may now prove a weak version of Proposition \ref{prop:trig-polynomials-nodal-set-result},
assuming the full regularity condition; this assumption will later
be lifted.

Background for the proof: Denote by $\Td$ the ``round'' torus -
the $d$-dimensional submanifold of $\mathbb{R}^{2d}$ defined by
the equations $c_{j}^{2}+s_{j}^{2}=1$, $j=1,\ldots,d$. Let $\varphi\colon\td\to\Td$
be the natural diffeomorphism $\varphi\left(x_{1},\ldots,x_{d}\right)\coloneqq\left(\cos\left(2\pi x_{1}\right),\sin\left(2\pi x_{1}\right),\ldots,\cos\left(2\pi x_{d}\right),\sin\left(2\pi x_{d}\right)\right)$.
This diffeomorphism is not an isometry, but it induces a strongly
equivalent metric; that is, there exist constants $\alpha,\beta>0$
such that for any $x,y\in\mathbb{T}^{d}$, $\alpha\dist\left(x,y\right)\le\left|\varphi\left(x\right)-\varphi\left(y\right)\right|\le\beta\dist\left(x,y\right).$

We will need a little background in integral geometry. For any compact,
connected set $K\subset\rk$, denote by $W_{j}\left(K\right)\coloneqq\max_{x,y\in K}\left|x_{j}-y_{j}\right|$
the \emph{width of $K$ along the $j$th axis}, and by $W\left(K\right)\coloneqq\frac{1}{k}\sum_{j=1}^{k}W_{j}\left(K\right)$
the \emph{average width}\footnote{This is a simpler version of the \emph{mean width}, defined similarly
as an integral average of support functions.}. It is an easy exercise to show that $\diam K\ll W\left(K\right)$,
and also that $W_{j}\left(K\right)=\Int[-\infty][\infty]{I\left(K,\left\{ x_{j}=\gamma\right\} \right)}{\gamma}$,
where $I\left(A,B\right)$ is the intersection indicator function
($1$ if $A\cap B\ne\varnothing$, $0$ otherwise). This is all we
need; for more on the subject of integral geometry, see \cite{Klain-Rota-book-Introduction-to-geometric-probability}.
\begin{lem}
\label{lem:fully-regular-trig-polys-admit-results-of-main-proposition}Suppose
$T\in\trigs$ is fully regular. Then $N\left(T\right)\ll D^{d}$ and
$\sum_{\Gamma\in\Z\left(T\right)}\diam\Gamma\ll D^{d-1}$.\end{lem}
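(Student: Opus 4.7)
I would prove the two estimates jointly by induction on the dimension $d$. The base case $d=1$ is immediate: a one-variable trigonometric polynomial of degree $D$, after the substitution $z=\mathrm{e}^{2\pi\mathrm{i}x}$, becomes an algebraic polynomial of degree $\le 2D$ on the unit circle, so it has at most $2D$ zeroes; each nodal component is an isolated point, and both inequalities hold trivially.

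\textbf{Sum of diameters via Crofton.} For the inductive step of the diameter bound, I would transfer the components through $\varphi\colon\td\to\Td\subset\R^{2d}$, which induces a bilipschitz-equivalent metric, so $\diam\Gamma\ll W(\varphi(\Gamma))$. Applying the Crofton identity $W_k(K)=\int I(K,\{y_k=\gamma\})\,\mathrm{d}\gamma$ along each of the $2d$ coordinate axes of $\R^{2d}$ and exchanging sum and integral yields
\[
\sum_{\Gamma\in\Z(T)} W_k(\varphi(\Gamma)) \le \int_{-1}^{1}\#\bigl\{\Gamma:\varphi(\Gamma)\cap\{y_k=\gamma\}\ne\Empty\bigr\}\,\mathrm{d}\gamma.
\]
The preimage in $\td$ of $\{y_k=\gamma\}\cap\Td$ is a union of at most two axis-parallel hyperplanes $\{x_j=t\}$, and any $\Gamma$ meeting such a hyperplane contributes at least one component to $Z(T|_{\{x_j=t\}})$; hence the integrand is bounded by $N(T|_{\{x_j=t\}})$. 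Lemma \ref{lem:improved-regularity} ensures $T|_{\{x_j=t\}}$ is fully regular (in dimension $d-1$) for all but finitely many $t$, so the inductive hypothesis forces $N(T|_{\{x_j=t\}})\ll D^{d-1}$ almost everywhere; integrating and summing over $k$ gives $\sum_\Gamma\diam\Gamma\ll D^{d-1}$.

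\textbf{Number of components via critical markers.} For the count bound, I would attach to each $\Gamma\in\Z(T)$ an algebraic ``marker point''. Since $\Gamma$ is a smooth closed hypersurface (stable nodal set), the function $\cos(2\pi x_j)$ attains a maximum on it, and Lagrange multipliers force either $\sin(2\pi x_j)=0$ at the max (the degenerate case, in which $\Gamma$ meets $\{x_j=0\}\cup\{x_j=\tfrac{1}{2}\}$) or $\nabla T\parallel e_j$ (so that $T=0$ and $\partial_iT=0$ for all $i\ne j$ at that point). Degenerate-case components are controlled by the inductive hypothesis applied to the restriction (using a generic shift of the coordinate if the literal values $0,\tfrac{1}{2}$ are exceptional in Lemma \ref{lem:improved-regularity}), contributing $\ll D^{d-1}$. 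Non-degenerate markers are common zeroes of a system of $d$ trigonometric polynomials ($T$ and the $d-1$ derivatives $\partial_iT$, $i\ne j$); algebraizing to $\R^{2d}$ and appending the $d$ torus constraints $c_k^2+s_k^2-1$ gives $2d$ polynomial equations in $2d$ variables, with the former of degree $\le D$ and the latter of degree $2$. Lemma \ref{lem:cond-finiteness-results} then supplies the Bézout bound $\ll D^d$, completing the induction.

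\textbf{Main obstacle.} The main obstacle I foresee is verifying the finiteness condition (Condition \ref{cond:finiteness}) for the auxiliary marker system $\{T=0\}\cup\{\partial_iT=0:i\ne j\}$: Condition \ref{cond:regular} directly supplies finite common zero sets only for the systems $\{T,\nabla T\}$ and $\{\nabla T\}$ (and their axis-aligned restrictions), not for this mixed system. I expect the resolution to come from the coalgebraic nature of the finiteness condition---it holds generically in the coefficients of $T$---combined either with a mild strengthening of the definition of full regularity, or with a preparatory generic linear change of coordinates that makes one distinguished direction play the role of $e_j$ and reduces the marker system to one already covered by the algebraic machinery of Section \ref{sec:trigonometric-polynomials-and-nodal-sets}.
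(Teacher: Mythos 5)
Your diameter estimate matches the paper's argument essentially verbatim: push forward by $\varphi$, bound $\diam$ by average width, apply the one-axis Crofton identity, pull each hyperplane $\{y_k=\gamma\}$ back to (at most two) affine hyperplanes $\{x_j=t\}$, and invoke the count bound in dimension $d-1$ for almost every $t$ via Lemma \ref{lem:improved-regularity}. That part is fine.

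The count bound is where you diverge from the paper and where there is a genuine gap. Your Lagrange-marker system $\{T=0\}\cup\{\partial_i T=0 : i\neq j\}$ is not one of the systems that Condition \ref{cond:regular} controls; Conditions \ref{cond:nodal-regularity} and \ref{cond:critical-regularity} speak only of the systems $\{T,\nabla T\}$ and $\{\nabla T\}$ and their coordinate-hyperplane restrictions. You flag this as the main obstacle, but neither of the two repairs you sketch actually closes it. A ``generic linear change of coordinates'' is unavailable here: to stay inside the trigonometric-polynomial setting you would need the change of variables to preserve the period lattice, which restricts you to $\mathrm{GL}_d(\mathbb{Z})$, a discrete set on which ``generic in the algebraic sense'' has no meaning. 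And strengthening the definition of full regularity to include the marker systems would require re-proving the abundance lemma (existence of a fully regular $T$ in $\trigs$) with the extra conditions; it is not obvious the explicit witness $\sum_j\sin(2\pi Dx_j)+A$ still works, and you would have to redo that verification for every $j$. The paper avoids the whole issue by attaching a marker to each \emph{nodal domain} rather than each nodal component: on a nodal domain, $T$ attains a nonzero interior extremum by compactness, which is a zero of $\nabla T$ alone. That system is already certified finite by Condition \ref{cond:critical-regularity}, Lemma \ref{lem:cond-finiteness-results} gives the Bézout bound $O(D^d)$ on its zeros, and Proposition \ref{prop:nodal-components-similar-to-nodal-domains} converts the nodal-domain count to a nodal-component count with $O(1)$ loss. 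This also removes the need for a joint induction: the count bound is proved outright for all $d$, and the diameter bound only consumes it in dimension $d-1$.

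One smaller remark: your degenerate case (markers with $\sin(2\pi x_j)=0$) sends you to the hyperplanes $\{x_j=0\}$ and $\{x_j=\tfrac12\}$, and you note that Lemma \ref{lem:improved-regularity} might have these two among its finitely many exceptional $t$-values. The proposed fix (generically shift the coordinate) faces the same torus-structure issue as above, though here you could instead argue by a small trigonometric perturbation as in the proof of Proposition \ref{prop:trig-polynomials-nodal-set-result}. Still, all of this machinery is avoided entirely by the paper's critical-point argument.
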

\begin{proof}
By Lemma \ref{lem:cond-finiteness-results}, the number of critical
points of $T$ in $\td$ is at most $O\left(D^{d}\right)$. There
is at least one critical point in each nodal domain, since each nodal
domain has a (necessarily nonzero) minimum or a maximum point by compactness.
Therefore there are $O\left(D^{d}\right)$ nodal domains, and by Proposition
\ref{prop:nodal-components-similar-to-nodal-domains}, $O\left(D^{d}\right)$
nodal components, proving the first part.

For the second part, let $\Gamma\in\Z\left(T\right)$. We have:
\[
\diam\Gamma\ll\diam\varphi\left(\Gamma\right)\ll W\left(\varphi\left(\Gamma\right)\right)=\frac{1}{2d}\sum_{j=1}^{2d}\Int[-\infty][\infty]{I\left(\varphi\left(\Gamma\right),\left\{ x_{j}=\gamma\right\} \right)}{\gamma}.
\]

Since intersections are preserved by the bijection $\varphi$, the
integrand $I\left(\varphi\left(\Gamma\right),\left\{ x_{j}=\gamma\right\} \right)$
can be written equivalently as $I\left(\Gamma,H_{j,\gamma}\right)$,
where:
\[
H_{j,\gamma}\coloneqq\varphi^{-1}\left(\left\{ x\in\R^{2d}:x_{j}=\gamma\right\} \right)=\begin{cases}
\left\{ x\in\td:\cos\left(2\pi x_{j}\right)=\gamma\right\}  & j\text{ odd}\\
\left\{ x\in\td:\sin\left(2\pi x_{j}\right)=\gamma\right\}  & j\text{ even}
\end{cases}
\]

We may restrict the integral's limits to $\left|\gamma\right|<1$
and get: $\diam\Gamma\ll\sum_{j=1}^{2d}\Int[-1][1]{I\left(\Gamma,H_{j,\gamma}\right)}{\gamma}$.
Summing over all $\Gamma\in\Z\left(T\right)$ (a finite sum), we get:
\[
\sum_{\Gamma\in\Z\left(T\right)}\diam\Gamma\ll\sum_{j=1}^{2d}\Int[-1][1]{\left(\text{\# of }\Gamma\in\Z\left(T\right)\text{ which intersect }H_{j,\gamma}\right)}{\gamma}.
\]

The number of nodal components that intersect some hyperplane is clearly
bounded by the number of nodal components of the restriction of $T$
to this hyperplane. $H_{j,\gamma}$ is the union of two hyperplanes
of the form $\left\{ x_{j}=t\right\} $, and by Lemma \ref{lem:improved-regularity}
and the first part of this lemma, for almost all $\gamma$ the number
of nodal components in the restriction is $O\left(D^{d-1}\right)$.
This gives the required bound.
\end{proof}

\paragraph*{Proof of the main proposition without the regularity condition.}
\begin{proof}[Proof of Proposition \ref{prop:trig-polynomials-nodal-set-result}.]
Let $\alpha,\beta>0$ be such that $\left|T\left(x\right)\right|>\alpha$
or $\left|\nabla T\left(x\right)\right|>\beta$ for any $x\in\td$,
under the constraint that $\alpha/\beta<\left(2D\right)^{-1}$ (see
Remark \ref{rem:about-nodal-stability}). Since the condition for
full regularity is dense in $\trigs$, it is possible to choose a
perturbation $P\in\trigs$ with $\max_{\td}\left|P\right|<\alpha/2$
and $\max_{\td}\left|\nabla P\right|<\beta/2$ such that $T+P$ is
fully regular.

Applying Proposition \ref{prop:smooth-perturbation} with the function
$T$ and the perturbation $P$ gives $N\left(T\right)=N\left(T+P\right)$.
By Lemma \ref{lem:fully-regular-trig-polys-admit-results-of-main-proposition},
$N\left(T+P\right)\ll D^{d}$; therefore, $N\left(T\right)\ll D^{d}$.

By Proposition \ref{prop:smooth-perturbation}, each component $\Gamma\in\Z\left(T\right)$
generates a component $\tilde{\Gamma}\in\Z\left(T+P\right)$ that
satisfies:
\[
\diam\Gamma\le\frac{2\alpha}{\beta}+\diam\tilde{\Gamma}\le\frac{1}{D}+\diam\tilde{\Gamma}.
\]

Therefore: 
\[
\sum_{\Gamma\in\Z\left(T\right)}\diam\Gamma\le\frac{N\left(T\right)}{D}+\quad\sum_{\mathclap{\Gamma\in\Z\left(T+P\right)}}\quad\diam\Gamma.
\]

We've established $N\left(T\right)\ll D^{d}$, and by Lemma \ref{lem:fully-regular-trig-polys-admit-results-of-main-proposition}
we have $\sum_{\Gamma\in\Z\left(T+P\right)}\diam\Gamma\ll D^{d-1}$;
therefore, $\sum_{\Gamma\in\Z\left(T\right)}\diam\Gamma\ll D^{d-1}$.
\end{proof}

\section{\label{sec:proof-of-main-theorem}Proof of Theorem \ref{thm:exponential-concentration}}

\subsection{Tools used in the proof}
\begin{prop}
\label{prop:nodal-domain-area}Any nodal domain $\Omega$ of any $f\in\hilbert$
satisfies $\vol\left(\Omega\right)\gg L^{-d}$\textup{.}
\end{prop}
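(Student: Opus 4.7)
The plan is a standard Faber--Krahn argument. Let $\Omega$ be a nodal domain of $f\in\hilbert$ and put $u\coloneqq f\cdot\indic_{\Omega}$, i.e., $f$ restricted to $\Omega$ and extended by zero to $\td$. Since $f$ vanishes on $\partial\Omega\subset Z(f)$, we have $u\in H^{1}_{0}(\Omega)\subset H^{1}(\td)$, and multiplying the eigenvalue equation $\Delta f+4\pi^{2}L^{2}f=0$ by $u$ and integrating by parts gives
\[
\|\nabla u\|_{2}^{2}=4\pi^{2}L^{2}\|u\|_{2}^{2}.
\]

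To turn this identity into a lower bound on $\vol(\Omega)$, I would invoke Nash's inequality on $\td$,
\[
\|u\|_{2}^{2+4/d}\ll\bigl(\|\nabla u\|_{2}^{2}+\|u\|_{2}^{2}\bigr)\|u\|_{1}^{4/d},
\]
which is equivalent to the small-time $L^{1}\to L^{\infty}$ heat-kernel bound $\|e^{t\Delta}\|_{L^{1}\to L^{\infty}}\ll t^{-d/2}+1$ on $\td$ and is verified directly by comparing the explicit Fourier series of the toral heat kernel to its Euclidean counterpart. Since $u$ is supported in $\Omega$, Cauchy--Schwarz gives $\|u\|_{1}\le\vol(\Omega)^{1/2}\|u\|_{2}$. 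Plugging this estimate and the eigenfunction identity into Nash, and cancelling $\|u\|_{2}^{2+4/d}$ (nonzero because $f\not\equiv0$ on $\Omega$), yields
\[
1\ll(L^{2}+1)\vol(\Omega)^{2/d},
\]
and therefore $\vol(\Omega)\gg L^{-d}$ for all large $L$; the finitely many bounded values of $L$ are absorbed into the implicit constant.

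The main (modest) obstacle is checking Nash's inequality on $\td$ with the correct dimensional scaling: in contrast with $\R^{d}$, the zero Fourier mode of $u$ forces the extra $\|u\|_{2}^{2}$ summand on the right-hand side, but this is harmless here because the eigenvalue $4\pi^{2}L^{2}$ dominates it. Crucially, nothing in the argument requires lifting $\Omega$ to $\R^{d}$, so no difficulty arises when $\Omega$ has large diameter or wraps around the torus in a topologically nontrivial way; the whole argument proceeds inside the Rayleigh quotient of $u$ on $\td$.
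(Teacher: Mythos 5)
Your proof is correct but takes a genuinely different route from the paper. The paper handles this in one line by invoking the Faber--Krahn inequality (citing Chavel): since $f|_{\Omega}$ realizes the first Dirichlet eigenvalue $\lambda_{1}\left(\Omega\right)=4\pi^{2}L^{2}$ and Faber--Krahn gives $\lambda_{1}\left(\Omega\right)\gtrsim\vol\left(\Omega\right)^{-2/d}$, the bound $\vol\left(\Omega\right)\gg L^{-d}$ is immediate. You instead deduce the same $\lambda_{1}\gtrsim\vol^{-2/d}$ scaling from Nash's inequality on $\td$. Both routes rest on the same preliminary fact (that $u=f\indic_{\Omega}$ is an admissible test function with Rayleigh quotient $4\pi^{2}L^{2}$; this requires a short regularization argument that you, like the paper, leave implicit), and both deliver the needed power of $\vol\left(\Omega\right)$. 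Faber--Krahn yields the sharp constant, which is irrelevant here; your Nash-inequality version is slightly weaker but more self-contained, since it derives the inequality from a Fourier-side heat-kernel comparison on $\td$ rather than quoting an isoperimetric-type theorem. You also correctly identify the one genuine advantage of working intrinsically on the torus: you never have to worry about whether the nodal domain lifts injectively to $\R^{d}$ or wraps around, a point that a careless application of the Euclidean Faber--Krahn would obscure (Chavel's treatment on compact manifolds handles it, so the paper's citation is fine, but your version makes it transparent). One small simplification you could make at the end: since $L^{2}$ is a positive integer, $L\ge1$ always, so $L^{2}+1\le2L^{2}$ gives $\vol\left(\Omega\right)\gg L^{-d}$ uniformly for all admissible $L$ with no need to treat small $L$ as exceptional.
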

Proposition \ref{prop:nodal-domain-area} follows immediately from
the classical Faber-Krahn inequality (note that the first Dirichlet
eigenvalue of any nodal domain $\Omega$ is $4\pi^{2}L^{2}$). See,
for instance, \cite[Chapter IV]{Chavel-book-Eigenvalues-in-Riemannian-geometry}.
\begin{prop}
\label{prop:local-bounds}Suppose $f\colon\rd\to\R$ is smooth and
satisfies $\Delta f+4\pi^{2}L^{2}f=0$. Let $x_{0}\in\rd$ and $r>0$.
Then there is a constant $C=C\left(r,d\right)>0$ such that:
\begin{align}
\left|f\left(x_{0}\right)\right|^{2} & \le CL^{d}\Int[B^{d}\left(x_{0},r/L\right)]{\left|f\left(x\right)\right|^{2}}x\label{eq:bound-function}\\
\left|\nabla f\left(x_{0}\right)\right|^{2} & \le CL^{d+2}\Int[B^{d}\left(x_{0},r/L\right)]{\left|f\left(x\right)\right|^{2}}x\label{eq:bound-gradient}\\
\left|\nabla\nabla f\left(x_{0}\right)\right|^{2} & \le CL^{d+4}\Int[B^{d}\left(x_{0},r/L\right)]{\left|f\left(x\right)\right|^{2}}x\label{eq:bound-hessian}
\end{align}

\end{prop}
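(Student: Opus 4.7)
The proof should proceed by a standard rescaling argument that reduces the $L$-dependent claim to a fixed-scale interior regularity estimate, followed by elliptic regularity on the rescaled problem.

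The plan is to define, for fixed $x_{0}\in\rd$ and $r>0$, the rescaled function $g\colon B^{d}\left(0,r\right)\to\R$ by $g\left(y\right)\coloneqq f\left(x_{0}+y/L\right)$. A direct computation using the chain rule shows that $\Delta_{y}g\left(y\right)=L^{-2}\Delta_{x}f\left(x_{0}+y/L\right)=-4\pi^{2}g\left(y\right)$, so $g$ satisfies the fixed Helmholtz equation $\Delta g+4\pi^{2}g=0$ on $B^{d}\left(0,r\right)$, with no remaining $L$-dependence. In addition, $g\left(0\right)=f\left(x_{0}\right)$, $\nabla_{y}g\left(0\right)=L^{-1}\nabla_{x}f\left(x_{0}\right)$, and $\nabla_{y}\nabla_{y}g\left(0\right)=L^{-2}\nabla_{x}\nabla_{x}f\left(x_{0}\right)$, while a change of variables gives $\|g\|_{L^{2}\left(B^{d}\left(0,r\right)\right)}^{2}=L^{d}\|f\|_{L^{2}\left(B^{d}\left(x_{0},r/L\right)\right)}^{2}$.

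The heart of the argument is then the fixed-scale statement: there exists $C=C\left(r,d\right)>0$ such that every smooth solution $g$ of $\Delta g+4\pi^{2}g=0$ on $B^{d}\left(0,r\right)$ satisfies
\[
\left|g\left(0\right)\right|^{2}+\left|\nabla g\left(0\right)\right|^{2}+\left|\nabla\nabla g\left(0\right)\right|^{2}\le C\|g\|_{L^{2}\left(B^{d}\left(0,r\right)\right)}^{2}.
\]
This is a standard interior regularity result for solutions of a second-order elliptic equation with smooth (in fact, constant) coefficients: by bootstrapping the elliptic estimate $\|g\|_{H^{k+2}\left(B^{d}\left(0,\rho'\right)\right)}\ll\|g\|_{H^{k}\left(B^{d}\left(0,\rho\right)\right)}$ for $\rho'<\rho\le r$ (using the equation to convert every two derivatives into a multiplication by $-4\pi^{2}$), one obtains $\|g\|_{H^{k}\left(B^{d}\left(0,r/2\right)\right)}\ll_{k}\|g\|_{L^{2}\left(B^{d}\left(0,r\right)\right)}$ for every $k$, and then Sobolev embedding with $k>d/2+2$ converts this into a $C^{2}$-bound on the smaller ball, which in particular controls $g$ and its first and second derivatives at the origin.

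Combining the two steps, substituting the rescaled quantities back, and multiplying by the appropriate power of $L$ yields the three inequalities \eqref{eq:bound-function}, \eqref{eq:bound-gradient}, \eqref{eq:bound-hessian}, with an additional factor of $L^{2}$ and $L^{4}$ arising respectively from the chain-rule factors on $\nabla f$ and $\nabla\nabla f$. The only subtlety worth mentioning is that the constant $C\left(r,d\right)$ is independent of $L$ precisely because the rescaled equation has no $L$ in it; no part of the argument requires any input beyond general elliptic theory.
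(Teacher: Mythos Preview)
Your proposal is correct. Both your argument and the paper's begin with the same rescaling $g\left(y\right)=f\left(x_{0}+y/L\right)$, reducing everything to a fixed-scale $L^{2}\to C^{2}$ interior estimate for solutions of $\Delta g+4\pi^{2}g=0$ on a ball of radius $r$; the difference lies in how that fixed-scale estimate is proved. You invoke standard elliptic interior regularity and Sobolev embedding, which is entirely valid and has the advantage of working verbatim for any second-order elliptic operator with smooth coefficients. The paper instead uses an elementary ``harmonic lift'': setting $u\left(x,x_{d+1}\right)=f\left(x\right)\cosh\left(2\pi x_{d+1}\right)$ gives a harmonic function on $\R^{d+1}$, and then the mean value property yields the pointwise bound on $f$, while differentiating the Poisson integral representation gives the bounds on $\nabla f$ and $\nabla\nabla f$. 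The paper's route is more self-contained and avoids citing elliptic theory or Sobolev embedding, at the cost of being specific to the Helmholtz equation; your route is shorter to state but imports heavier machinery.
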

Proposition \ref{prop:local-bounds} is a local property of functions
on $\rd$, and as such, it may be applied directly to functions in
$\hilbert$, viewed as functions on $\rd$ periodic extension. These
local bounds are special cases of very general classical local bounds
on solutions of PDEs (see, for instance, \cite[Chapter 8]{Gilbarg-Trudinger-book}),
but for the sake of completeness, we provide a simple and easily readable
proof for our case in Appendix \ref{sec:appendix-additional-proofs}.
\begin{prop}
\label{prop:concentration-of-norm-of-fL}$\p\left\{ \left\Vert f_{L}\right\Vert >2\right\} \le\e^{-c\dim\hilbert}$
for some absolute constant $c>0$.
\end{prop}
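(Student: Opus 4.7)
\noindent\textbf{Proof proposal for Proposition \ref{prop:concentration-of-norm-of-fL}.}

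The plan is to reduce the norm of $f_L$ to a normalized sum of squared independent standard Gaussians, and then apply a standard Chernoff bound. The first step is to compute $\left\Vert f_L\right\Vert _{L^{2}\left(\td\right)}^{2}$ by exploiting the orthogonality of the trigonometric basis. For distinct $\lambda, \mu \in \sumsplus$, the functions $\cos(2\pi\lambda\cdot x)$, $\sin(2\pi\lambda\cdot x)$, $\cos(2\pi\mu\cdot x)$, $\sin(2\pi\mu\cdot x)$ are pairwise orthogonal in $L^{2}\left(\td\right)$, and each has $L^{2}$-norm $\tfrac{1}{\sqrt{2}}$. Substituting into the definition \eqref{eq:fL} of $f_L$ and expanding yields
\[
\left\Vert f_{L}\right\Vert ^{2}=\frac{2}{\dim\hilbert}\sum_{\lambda\in\sumsplus}\left(a_{\lambda}^{2}\cdot\tfrac{1}{2}+b_{\lambda}^{2}\cdot\tfrac{1}{2}\right)=\frac{1}{\dim\hilbert}\sum_{i=1}^{\dim\hilbert}X_{i}^{2},
\]
where the $X_i$ are i.i.d.\ $\N(0,1)$ (we use that $\#\sumsplus=\tfrac{1}{2}\dim\hilbert$, so the total number of independent Gaussians is $2\#\sumsplus=\dim\hilbert$).

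Having rewritten $\left\Vert f_L\right\Vert ^{2}$ as $\tfrac{1}{N}\chi_{N}^{2}$ with $N\coloneqq\dim\hilbert$, the event $\left\{\left\Vert f_L\right\Vert >2\right\}$ becomes $\left\{\chi_{N}^{2}>4N\right\}$. I would apply the standard exponential Markov inequality: for any $s\in\left(0,\tfrac{1}{2}\right)$,
\[
\p\left\{ \chi_{N}^{2}>4N\right\} \le\e^{-4Ns}\,\E\left\{ \e^{s\chi_{N}^{2}}\right\} =\e^{-4Ns}\left(1-2s\right)^{-N/2}.
\]
Optimizing (or simply taking $s=\tfrac{1}{4}$) yields a bound of the form $\left(c_{0}\right)^{N}$ with $c_{0}=\sqrt{2}/\e<1$, so
\[
\p\left\{ \left\Vert f_{L}\right\Vert >2\right\} \le\exp\left(-c\dim\hilbert\right)
\]
for $c=1-\tfrac{1}{2}\ln 2>0$, which is the required bound.

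I do not anticipate any real obstacle: both steps are elementary. The only thing to verify carefully is the counting of independent Gaussian coefficients versus the normalization constant in \eqref{eq:fL}, so that the resulting chi-squared has exactly $\dim\hilbert$ degrees of freedom and exponential concentration at level $\epsilon=1$ (i.e.\ concentration of $\tfrac{1}{N}\chi_{N}^{2}$ around $1$ at radius $3$) produces decay at rate linear in $\dim\hilbert$.
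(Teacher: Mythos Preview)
Your proposal is correct and follows essentially the same approach as the paper: both identify $\left\Vert f_{L}\right\Vert^{2}$ with $\tfrac{1}{n}\left|X\right|^{2}$ for a standard Gaussian vector $X\in\mathbb{R}^{n}$, $n=\dim\hilbert$, and then apply the exponential Markov (Chernoff) bound to $\exp\bigl(\tfrac{1}{4}\left|X\right|^{2}\bigr)$. The paper only sketches this and cites it as a special case of Bernstein's inequalities, whereas you spell out the orthogonality computation and the choice $s=\tfrac{1}{4}$ explicitly.
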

Considering $f_{L}$ as a random vector in $\rn$ (where $n=\dim\hilbert$)
that is distributed like $\frac{X}{\sqrt{n}}$, where $X$ has standard
multivariate normal distribution in $\rn$, Proposition \ref{prop:concentration-of-norm-of-fL}
may be formulated equivalently as $\p\left\{ \left|X\right|>2\sqrt{n}\right\} \le\e^{-cn}$.
This is a special case of Bernstein's classical inequalities and it
may be proven by applying Chebyshev's inequality on $\exp\left(\frac{1}{4}\left|X\right|^{2}\right)$.
We omit the details.

Again considering $f_{L}$ as a random vector in $\rn$, the next
proposition is a form of the Gaussian isoperimetric inequality (see
\cite{Sudakov-Tsirelson}, \cite{Borell}).
\begin{prop}
\label{prop:isoperimetric-fL}Let $F\subset\hilbert$ and for any
$\rho>0$, denote $F_{+\rho}\coloneqq\left\{ f\in\hilbert:\dist\left(f,F\right)\le\rho\right\} $.

Suppose that $\mathbb{P}\left(F_{+\rho}\right)\le\frac{3}{4}$. Then
$\mathbb{P}\left(F\right)\le C\e^{-c\rho^{2}\dim\hilbert}$ for some
absolute constants $C,c>0$.
\end{prop}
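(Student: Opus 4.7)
The plan is to reduce this to the classical Borell--Sudakov--Tsirelson Gaussian isoperimetric inequality for the standard Gaussian measure on $\R^{n}$, where $n = \dim\hilbert$. Fix an orthonormal basis of $\hilbert$ (for example the normalized trigonometric basis) and identify $\hilbert \cong \R^{n}$ isometrically. From the definition \eqref{eq:fL}, as a vector in this $\R^{n}$ the random function $f_{L}$ is distributed as $X/\sqrt{n}$, where $X \sim \N(0, I_{n})$ has the standard Gaussian distribution. Consequently, for any Borel set $F \subset \hilbert$, letting $F' \coloneqq \sqrt{n}\, F \subset \R^{n}$, we have $\p(f_{L} \in F) = \gamma_{n}(F')$, where $\gamma_{n}$ denotes the standard Gaussian measure. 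Furthermore, since scaling by $\sqrt{n}$ multiplies all distances by $\sqrt{n}$, the neighborhood $F_{+\rho}$ corresponds to $F'_{+\rho\sqrt{n}}$, so the hypothesis becomes $\gamma_{n}(F'_{+\rho\sqrt{n}}) \le 3/4$.

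Next I would invoke the classical Gaussian isoperimetric inequality in the form: for any Borel $A \subset \R^{n}$ and any $s > 0$,
\[
\gamma_{n}(A_{+s}) \ge \Phi\bigl(\Phi^{-1}(\gamma_{n}(A)) + s\bigr),
\]
where $\Phi$ is the standard normal CDF. Applying this with $A = F'$ and $s = \rho\sqrt{n}$, and using the hypothesis, yields
\[
\Phi^{-1}(\gamma_{n}(F')) \le \Phi^{-1}(3/4) - \rho\sqrt{n}.
\]
Setting $\alpha \coloneqq \Phi^{-1}(3/4) > 0$, this gives $\gamma_{n}(F') \le \Phi(\alpha - \rho\sqrt{n})$.

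Finally I would split into two regimes. When $\rho\sqrt{n} \ge 2\alpha$, the argument of $\Phi$ is at most $-\rho\sqrt{n}/2$, and the standard tail bound $\Phi(-t) \le \tfrac{1}{2}\e^{-t^{2}/2}$ for $t > 0$ gives $\p(F) = \gamma_{n}(F') \le \tfrac{1}{2}\e^{-\rho^{2}n/8}$, which is of the required form. When $\rho\sqrt{n} < 2\alpha$ the quantity $\rho^{2}\dim\hilbert$ is bounded by an absolute constant, so $\e^{-c\rho^{2}\dim\hilbert}$ is bounded below by a positive absolute constant and the trivial bound $\p(F) \le 1$ yields the desired inequality after enlarging $C$. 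There is no real obstacle here; the only step that deserves care is tracking the $1/\sqrt{n}$ rescaling from the normalization of $f_{L}$, which is exactly what produces the factor $\dim\hilbert$ in the exponent.
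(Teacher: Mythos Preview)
Your proof is correct and matches the paper's approach exactly: the paper does not give a proof but simply states that the proposition ``is a form of the Gaussian isoperimetric inequality'' with references to Sudakov--Tsirel'son and Borell, and you have supplied precisely those details, including the key observation that the normalization of $f_{L}$ makes it distributed as $X/\sqrt{n}$, which is what turns the Euclidean radius $\rho$ into $\rho\sqrt{n}$ after rescaling and hence produces the factor $\dim\hilbert$ in the exponent.
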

Next, recall that a smooth function is said to have a \emph{stable
nodal set} if it doesn't have zeroes in common with its gradient (Definition
\ref{def:stable-nodal-set}). The following proposition follows either
from Bulinskaya's lemma \cite[Lemma 11.2.10]{Adler-Taylor-book-Random-fields-and-geometry}
or from \cite[Lemma 2.3]{Oravecz-Rudnick-Wigman-the-Leray-measure-of-nodal-sets}.
\begin{prop}
\label{prop:a.s.-stable-nodal-set}Almost surely, $f_{L}$ has a stable
nodal set.
\end{prop}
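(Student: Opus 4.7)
The plan is to apply a Bulinskaya-type argument to the joint process $X\left(x\right)\coloneqq\left(f_{L}\left(x\right),\nabla f_{L}\left(x\right)\right)\colon\td\to\R^{d+1}$, showing that the event $\{0\in X\left(\td\right)\}$ has probability zero. The argument has two ingredients: a pointwise density bound coming from non-degeneracy of the Gaussian $X\left(x\right)$, and a deterministic truncation controlling derivatives of $f_{L}$ uniformly on $\td$.

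\emph{Step 1 (pointwise density).} For any fixed $x\in\td$, the vector $X\left(x\right)$ is centred Gaussian in $\R^{d+1}$. Differentiating $K_{L}$ twice as in Section~\ref{sec:proof-of-usability-of-Nazarov-Sodin-theorem}, and using the $\lambda\leftrightarrow-\lambda$ symmetry of $\sums$ together with the orthogonality $\sum_{\lambda\in\sums}\lambda_{i}\lambda_{j}=\tfrac{L^{2}\dim\hilbert}{d}\delta_{ij}$, one finds $\E\left|f_{L}\left(x\right)\right|^{2}=1$, $\E\left[f_{L}\left(x\right)\partial_{i}f_{L}\left(x\right)\right]=0$, and $\E\left[\partial_{i}f_{L}\left(x\right)\partial_{j}f_{L}\left(x\right)\right]=\tfrac{\left(2\pi L\right)^{2}}{d}\delta_{ij}$. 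The covariance matrix is therefore the diagonal, non-degenerate matrix $\mathrm{diag}\left(1,\tfrac{\left(2\pi L\right)^{2}}{d},\ldots,\tfrac{\left(2\pi L\right)^{2}}{d}\right)$, so the density of $X\left(x\right)$ is bounded by some $\kappa=\kappa\left(L\right)<\infty$, uniformly in $x$ by stationarity.

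\emph{Step 2 (a priori derivative bound).} Because $f_{L}\in\hilbert$ is a trigonometric polynomial of degree at most $L$, Bernstein's inequality gives $\left\Vert \nabla^{2}f_{L}\right\Vert _{\infty}\ll L^{2}\left\Vert f_{L}\right\Vert _{\infty}$, and $\left\Vert f_{L}\right\Vert _{\infty}$ has all Gaussian moments finite (this also follows from Proposition~\ref{prop:concentration-of-norm-of-fL} together with the finite dimensionality of $\hilbert$). Consequently, for large $M$ the event $A_{M}\coloneqq\left\{ \left\Vert \nabla^{2}f_{L}\right\Vert _{\infty}\le M\right\}$ satisfies $\p\left(A_{M}^{c}\right)\to0$ as $M\to\infty$.

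\emph{Step 3 (Bulinskaya estimate).} Partition $\td$ into $N\simeq\epsilon^{-d}$ cubes $Q_{i}$ of side $\epsilon$ with centres $x_{i}$. Suppose that on $A_{M}$ there exists $y\in Q_{i}$ with $f_{L}\left(y\right)=0$ and $\nabla f_{L}\left(y\right)=0$. Then Taylor expansion at $y$, using the Hessian bound $M$, yields $\left|\nabla f_{L}\left(x_{i}\right)\right|\le CM\epsilon$ and $\left|f_{L}\left(x_{i}\right)\right|\le CM\epsilon^{2}$. Thus $X\left(x_{i}\right)$ is forced into a box of Lebesgue measure $O\left(M^{d+1}\epsilon^{d+2}\right)$, and by Step~1 this event has probability at most $C\kappa M^{d+1}\epsilon^{d+2}$. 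Summing over $i$:
\[
\p\bigl(A_{M}\cap\left\{ f_{L}\text{ has a non-stable nodal point}\right\} \bigr)\le CN\kappa M^{d+1}\epsilon^{d+2}\le C\kappa M^{d+1}\epsilon^{2}\xrightarrow{\epsilon\to0}0.
\]
Combined with $\p\left(A_{M}^{c}\right)\to0$ as $M\to\infty$, this forces the event that $f_{L}$ has a non-stable nodal point to have probability zero, which is the claim.

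\emph{Main obstacle.} The only place requiring care is the simultaneous control of the random quantities $\left\Vert \nabla f_{L}\right\Vert _{\infty}$ and $\left\Vert \nabla^{2}f_{L}\right\Vert _{\infty}$ used in the Taylor step; this is exactly what the truncation $A_{M}$ handles, and it works precisely because $\hilbert$ is finite-dimensional so Bernstein-type inequalities are available with deterministic constants. The non-degeneracy of the covariance in Step~1 is unconditional and uses only the symmetries of $\sums$, so it holds for every admissible $L$.
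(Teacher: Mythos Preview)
Your argument is correct and is precisely a self-contained version of Bulinskaya's lemma, which is exactly what the paper invokes here (the paper does not give its own proof but simply cites Bulinskaya's lemma \cite{Adler-Taylor-book-Random-fields-and-geometry} or \cite[Lemma 2.3]{Oravecz-Rudnick-Wigman-the-Leray-measure-of-nodal-sets}). Your Steps 1--3 reproduce the standard Bulinskaya scheme in this setting: non-degeneracy of the joint Gaussian $\left(f_{L},\nabla f_{L}\right)$, an a.s.\ bound on second derivatives, and the covering/Taylor argument that buys the crucial extra power of $\epsilon$.

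Two cosmetic remarks: the trigonometric degree of $f_{L}$ is at most $\sqrt{d}\,L$ rather than $L$, which is irrelevant for the Bernstein step; and ``admissible'' in the paper refers to sequences of $L$ values rather than individual eigenvalues---your non-degeneracy in Step~1 in fact holds for every $L$ with $\hilbert\ne\{0\}$, with no admissibility needed.
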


\subsection{\label{sub:concentration-implies-concentration}Concentration around
the median implies concentration around the mean and limiting mean}

We begin by showing that in Theorem \ref{thm:exponential-concentration},
the first part implies the second and third parts. Throughout the
remainder of this section, we denote $m_{L}\coloneqq\median\left\{ N_{L}/L^{d}\right\} $.

Let $X_{L}\coloneqq N_{L}/L^{d}-m_{L}$. By the first part of the
theorem, $\mathbb{P}\left\{ \left|X_{L}\right|>\varepsilon\right\} \le C\left(\varepsilon\right)\e^{-c\left(\varepsilon\right)\dim\hilbert}$.
The random variables $X_{L}$ are a.s.\ uniformly bounded, since
$N_{L}/L^{d}$ are a.s.\ uniformly bounded by Courant's nodal domain
theorem (or alternatively by Proposition \ref{prop:trig-polynomials-nodal-set-result}).
By the law of total expectation:
\begin{align*}
\left|\E\left\{ X_{L}\right\} \right| & \le\left|\E\left\{ X_{L}|X_{L}\ge\varepsilon\right\} \right|\cdot\p\left\{ X_{L}\ge\varepsilon\right\} \\
 & \phantom{\le\left|\E\left\{ X_{L}|X_{L}\ge\varepsilon\right\} \right|}+\left|\E\left\{ X_{L}|\left|X_{L}\right|<\varepsilon\right\} \right|\cdot\p\left\{ \left|X_{L}\right|<\varepsilon\right\} +\left|\E\left\{ X_{L}|X_{L}\le-\varepsilon\right\} \right|\cdot\p\left\{ X_{L}\le-\varepsilon\right\} .
\end{align*}
The first and third terms are bounded by $C\left(\varepsilon\right)\e^{-c\left(\varepsilon\right)\dim\hilbert}$
multiplied by the a.s.\ uniform bound on $X_{L}$, while the second
term is bounded by $\varepsilon$. We may assume that $\dim\hilbert$
is large enough (see Remark \ref{rem:understanding-the-main-result})
such that the sum of the first and third terms is smaller than $\varepsilon$,
and get $\left|\E\left\{ X_{L}\right\} \right|\le2\varepsilon$. Therefore,
by the triangle inequality:
\[
\p\left\{ \left|\frac{N_{L}}{L^{d}}-\E\left\{ \frac{N_{L}}{L^{d}}\right\} \right|>3\varepsilon\right\} =\p\left\{ \left|X_{L}-\E\left\{ X_{L}\right\} \right|>3\varepsilon\right\} \le\p\left\{ \left|X_{L}\right|>\varepsilon\right\} \le C\left(\varepsilon\right)\e^{-c\left(\varepsilon\right)\dim\hilbert}.
\]
This proves the second part of the theorem. For the third part, let
$L$ be large enough such that $\left|\E\left\{ \frac{N_{L}}{L^{d}}\right\} -\nu\right|\le\varepsilon$.
Then, by the triangle inequality:
\[
\p\left\{ \left|\frac{N_{L}}{L^{d}}-\nu\right|>4\varepsilon\right\} \le\p\left\{ \left|\frac{N_{L}}{L^{d}}-\E\left\{ \frac{N_{L}}{L^{d}}\right\} \right|>3\varepsilon\right\} \le C\left(\varepsilon\right)\e^{-c\left(\varepsilon\right)\dim\hilbert}.
\]

\subsection{The exceptional set of instability}

We now show that the concentration around the median follows from
the existence of a small \emph{exceptional set of instability}, which
we will later construct. This is an exponentially small set $E\subset\hilbert$
such that outside this set, the number of nodal components is stable
under sufficiently small perturbations.
\begin{prop}
\label{prop:exceptional-set-implies-theorem}Suppose that for every
$\epsilon>0$, there exist $\rho=\rho\left(\epsilon\right)>0$ and
$\tau=\tau\left(\varepsilon\right)>0$ such that for every $L$, there
exists an ``exceptional set of instability'' $E=E\left(\epsilon,L\right)\subset\mathcal{H}_{L}$
satisfying two conditions:
\begin{enumerate}
\item ($f_{L}$ has exponentially small probability to be exceptional.)
For some constants $C\left(\varepsilon\right),c>0$,
\begin{equation}
\mathbb{P}\left(E\right)\le\min\left\{ \frac{1}{4},C\left(\epsilon\right)\e^{-c\tau^{2}\dim\hilbert}\right\} .\label{eq:bound-on-P(E)}
\end{equation}

\item ($N$ is lower semi-continuous for non-exceptional functions.) For
any $f\in\hilbert\setminus E$ and $g\in\hilbert$ such that $\left\Vert g\right\Vert \le\rho$,
\begin{equation}
N\left(f+g\right)\ge N\left(f\right)-\epsilon L^{d}.\label{eq:N-is-lsc-outside-E}
\end{equation}

\end{enumerate}
Then the first part of Theorem \ref{thm:exponential-concentration}
holds with constant $c\left(\varepsilon\right)$ proportional to $\min\left\{ \rho^{2},\tau^{2}\right\} $.\end{prop}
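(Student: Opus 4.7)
The plan is to deduce the two-sided concentration of $N_{L}/L^{d}$ around $m_{L}\coloneqq\median\left\{N_{L}/L^{d}\right\}$ from two applications of the Gaussian isoperimetric inequality (Proposition \ref{prop:isoperimetric-fL}), one per tail, where the lower semi-continuity property (\ref{eq:N-is-lsc-outside-E}) is used to convert small $L^{2}\left(\td\right)$-perturbations into control of $N$ outside $E$.

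Write the upper tail $F\coloneqq\left\{f\in\hilbert:N\left(f\right)/L^{d}>m_{L}+\varepsilon\right\}$ and the lower tail $G\coloneqq\left\{f\in\hilbert:N\left(f\right)/L^{d}<m_{L}-\varepsilon\right\}$; it suffices to bound $\p\left(F\right)$ and $\p\left(G\right)$ each by $C\left(\varepsilon\right)\e^{-c\min\left\{\rho^{2},\tau^{2}\right\}\dim\hilbert}$. For the upper tail, set $F^{*}\coloneqq F\setminus E$. Any $h\in\left(F^{*}\right)_{+\rho}$ has the form $h=f+g$ with $f\in F^{*}$ and $\left\Vert g\right\Vert \le\rho$; since $f\notin E$, (\ref{eq:N-is-lsc-outside-E}) gives $N\left(h\right)\ge N\left(f\right)-\varepsilon L^{d}>m_{L}L^{d}$. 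Hence $\left(F^{*}\right)_{+\rho}\subset\left\{h:N\left(h\right)/L^{d}>m_{L}\right\}$, whose probability is at most $1/2\le 3/4$ by definition of the median. Proposition \ref{prop:isoperimetric-fL} then yields $\p\left(F^{*}\right)\le C\e^{-c\rho^{2}\dim\hilbert}$, and a union bound with (\ref{eq:bound-on-P(E)}) controls $\p\left(F\right)$.

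The lower tail uses (\ref{eq:N-is-lsc-outside-E}) in a dual fashion. For any $f\in G_{+\rho}\setminus E$ there exists $h\in G$ with $h=f+g$ and $\left\Vert g\right\Vert \le\rho$; applying (\ref{eq:N-is-lsc-outside-E}) at the non-exceptional point $f$ gives $N\left(h\right)\ge N\left(f\right)-\varepsilon L^{d}$, so $N\left(f\right)/L^{d}\le N\left(h\right)/L^{d}+\varepsilon<m_{L}$. Therefore $G_{+\rho}\setminus E\subset\left\{f:N\left(f\right)/L^{d}<m_{L}\right\}$, which has probability $\le 1/2$; combined with $\p\left(E\right)\le 1/4$, this yields $\p\left(G_{+\rho}\right)\le 3/4$, and Proposition \ref{prop:isoperimetric-fL} gives $\p\left(G\right)\le C\e^{-c\rho^{2}\dim\hilbert}$.

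Summing the two tail bounds together with (\ref{eq:bound-on-P(E)}) establishes the first part of Theorem \ref{thm:exponential-concentration} with $c\left(\varepsilon\right)\gg\min\left\{\rho^{2},\tau^{2}\right\}$. The main subtlety — and the reason the hypothesis demands the unconditional bound $\p\left(E\right)\le 1/4$ alongside the exponentially small one — is the lower-tail step, where the $3/4$ threshold required by Proposition \ref{prop:isoperimetric-fL} can only be attained after absorbing $E$ into the estimate on $\p\left(G_{+\rho}\right)$; in the upper-tail step, by contrast, the $\rho$-expansion already lies in $\left\{N/L^{d}>m_{L}\right\}$ directly, and $E$ is only needed in the final union bound.
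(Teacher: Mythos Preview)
Your proof is correct and follows essentially the same approach as the paper: split into upper and lower tails, apply the lower semi-continuity property (\ref{eq:N-is-lsc-outside-E}) to show $(F\setminus E)_{+\rho}\subset\{N/L^{d}>m_{L}\}$ and $G_{+\rho}\setminus E\subset\{N/L^{d}<m_{L}\}$, then invoke Proposition \ref{prop:isoperimetric-fL} in each case (using $\p(E)\le\tfrac14$ to reach the $\tfrac34$ threshold for the lower tail, and a union bound with (\ref{eq:bound-on-P(E)}) for the upper tail). Your closing remark on the asymmetric role of the $\p(E)\le\tfrac14$ hypothesis is accurate and a nice clarification.
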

\begin{proof}
Notice that $\left\{ f\in\hilbert:\left|N\left(f\right)/L^{d}-m_{L}\right|>\epsilon\right\} =F\cup G$,
where:
\begin{align*}
F & =\left\{ f\in\mathcal{H}_{L}:N\left(f\right)>\left(m_{L}+\epsilon\right)L^{d}\right\} \\
G & =\left\{ f\in\mathcal{H}_{L}:N\left(f\right)<\left(m_{L}-\epsilon\right)L^{d}\right\} 
\end{align*}

First, we bound $\p\left(F\right)$. Let $h\in\left(F\setminus E\right)_{+\rho}$;
that is, $h=f+g$ where $f\in F\setminus E$ and $g\in\hilbert$ satisfies
$\left\Vert g\right\Vert \le\rho$. Then:
\[
N\left(h\right)=N\left(f+g\right)\stackrel{\left(f\notin E,\left\Vert g\right\Vert \le\rho\right)}{\ge}N\left(f\right)-\varepsilon L^{d}\stackrel{\left(f\in F\right)}{>}m_{L}L^{d}.
\]
Therefore $\left(F\setminus E\right)_{+\rho}\subset\left\{ h\in\mathcal{H}_{L}:N\left(h\right)/L^{d}>m_{L}\right\} $,
the latter set having probability at most $\frac{1}{2}$.

Proposition \ref{prop:isoperimetric-fL} gives $\mathbb{P}\left(F\setminus E\right)\le C\e^{-c\rho^{2}\dim\hilbert}$.
Together with $\p\left(E\right)\le C\left(\varepsilon\right)\e^{-c\tau^{2}\dim\hilbert}$,
we have $\mathbb{P}\left(F\right)\le C\left(\epsilon\right)\e^{-c\left(\epsilon\right)\dim\hilbert}$
with $c\left(\varepsilon\right)$ proportional to $\min\left\{ \rho^{2},\tau^{2}\right\} $.

Next, we bound $\p\left(G\right)$. Let $h\in G_{+\rho}\setminus E$;
that is, $h=f+g$ where $f\in G$, $g\in\hilbert$ satisfies $\left\Vert g\right\Vert \le\rho$,
and $h\notin E$. Then:
\[
\left(m_{L}-\varepsilon\right)L^{d}\stackrel{\left(f\in G\right)}{>}N\left(f\right)=N\left(h-g\right)\stackrel{\left(h\notin E,\left\Vert g\right\Vert \le\rho\right)}{\ge}N\left(h\right)-\varepsilon L^{d}.
\]
Therefore $G_{+\rho}\setminus E\subset\left\{ h\in\mathcal{H}_{L}:N\left(h\right)/L^{d}<m_{L}\right\} $,
the latter set having probability at most $\frac{1}{2}$. Thus $\mathbb{P}\left(G_{+\rho}\right)\le\frac{3}{4}$,
and applying Proposition \ref{prop:isoperimetric-fL} again gives
$\mathbb{P}\left(G\right)\le C\e^{-c\rho^{2}\dim\hilbert}$.
\end{proof}

\subsection{Construction of the exceptional set \texorpdfstring{$E$}{E}}

We now present the construction of the set $E=E\left(\epsilon,L\right)\subset\hilbert$,
so throughout this part of the paper, $\epsilon$ and $L$ are fixed.
We introduce new small parameters $0<\alpha,\beta,\delta<1$ and one
large parameter $R>2$ that all depend only on $\epsilon$ in a way
that will be determined later.

Cover the torus $\mathbb{T}^{d}$ by as few as possible closed balls
$\left\{ B_{j}\right\} $ of radius $RL^{-1}$ (their amount is $O\left(L^{d}R^{-d}\right)$).
We will later refer to $2B_{j}$, $3B_{j}$ and $4B_{j}$; these are
balls with the same center as $B_{j}$ and radius multiplied by $2$,
$3$ and $4$ respectively. We require the cover to satisfy a bounded
multiplicity condition: For any point $x\in\td$, the amount of balls
$4B_{j}$ that cover $x$ is $O\left(1\right)$.

For any $f\in\mathcal{H}_{L}$, we say that $3B_{j}$ is \emph{an
unstable ball with respect to $f$} if there exists a point $x\in3B_{j}$
such that $\left|f\left(x\right)\right|\le\alpha$ and $\left|\nabla f\left(x\right)\right|\le\beta L$.

Finally, define $E\subset\hilbert$ as the set of functions for which
the number of unstable balls exceeds $\delta L^{d}$. It is easy to
verify that $E$ is measurable.

\subsection{Proof that \texorpdfstring{$E$}{E} has exponentially small probability}

We will now prove (\ref{eq:bound-on-P(E)}) under certain assumptions
that will arise from the proof. We introduce two new small parameters
$0<\gamma,\tau<1$ that depend only on $\varepsilon$ in a way that
will be determined later.

It suffices to prove (\ref{eq:bound-on-P(E)}) for $\p\left(\tilde E\right)$
where $\tilde E\coloneqq E\cap\left\{ f\in\hilbert:\left\Vert f\right\Vert \le2\right\} $
instead of $E$. The leftover set $E\setminus\tilde E$ may be discarded
by Proposition \ref{prop:concentration-of-norm-of-fL} as it has comparatively
negligible probability.

Thus, let $f\in E$ with $\left\Vert f\right\Vert \le2$. In each
ball $3B_{j}$ that is unstable with respect to $f$, fix a point
$x_{j}\in3B_{j}$ such that $\left|f\left(x_{j}\right)\right|\le\alpha$
and $\left|\nabla f\left(x_{j}\right)\right|\le\beta L$.

Using the local bound from Proposition \ref{prop:local-bounds} with
arbitrary $x_{0}\in B\left(x_{j},\gamma L^{-1}\right)$ and $r=1$,
we have:
\begin{equation}
\sup_{x_{0}\in B\left(x_{j},\gamma L^{-1}\right)}\left|\nabla\nabla f\left(x_{0}\right)\right|^{2}\ll\sup_{x_{0}\in B\left(x_{j},\gamma L^{-1}\right)}L^{d+4}\Int[B\left(x_{0},L^{-1}\right)]{\left|f\left(x\right)\right|^{2}}x\ll L^{d+4}\Int[B\left(x_{j},2L^{-1}\right)]{\left|f\left(x\right)\right|^{2}}x.\label{eq:E-is-small-1}
\end{equation}
Since $R>2$, we have $B\left(x_{j},2L^{-1}\right)\subset4B_{j}$,
and the amount of balls $B\left(x_{j},2L^{-1}\right)$ that any point
$x\in\td$ belongs to is $O\left(1\right)$. Therefore, summing over
all balls:
\begin{equation}
\sum_{j}\Int[B\left(x_{j},2L^{-1}\right)]{\left|f\left(x\right)\right|^{2}}x\ll\left\Vert f\right\Vert ^{2}\ll1.\label{eq:E-is-small-2}
\end{equation}

Plugging (\ref{eq:E-is-small-2}) into (\ref{eq:E-is-small-1}) yields
$\sum_{j}\sup_{B\left(x_{j},\gamma L^{-1}\right)}\left|\nabla\nabla f\right|^{2}\ll L^{d+4}$.
There are at least $\delta L^{d}$ summands, so the average summand
is $O\left(L^{4}\delta^{-1}\right)$. Multiplying the hidden constant
by $4$, at least a proportion $\frac{3}{4}$ of all indexes $j$
satisfy:
\begin{equation}
\sup_{B\left(x_{j},\gamma L^{-1}\right)}\left|\nabla\nabla f\right|\ll L^{2}\delta^{-1/2}.\label{eq:bound-on-sup-grad-grad-f}
\end{equation}

Now, introduce a perturbation $g\in\hilbert$ with $\left\Vert g\right\Vert \le\tau$.
By the same argument as above, using Proposition \ref{prop:local-bounds}
and summing over $j$, we have $\sum_{j}\sup_{B\left(x_{j},\gamma L^{-1}\right)}\left|g\right|^{2}\ll L^{d}\tau^{2}$,
and at least a proportion $\frac{3}{4}$ of all indexes $j$ satisfy:
\begin{equation}
\sup_{B\left(x_{j},\gamma L^{-1}\right)}\left|g\right|\ll\tau\delta^{-1/2}.\label{eq:bound-on-sup-g}
\end{equation}
Using this argument for the third time, now with $\nabla g$, we have
$\sum_{j}\sup_{B\left(x_{j},\gamma L^{-1}\right)}\left|\nabla g\right|^{2}\ll L^{d+2}\tau^{2}$,
so at least a proportion $\frac{3}{4}$ of all indexes $j$ satisfy:
\begin{equation}
\sup_{B\left(x_{j},\gamma L^{-1}\right)}\left|\nabla g\right|\ll\tau L\delta^{-1/2}.\label{eq:bound-on-sup-grad-g}
\end{equation}

Thus, at least a proportion $\frac{1}{4}$ of all indexes $j$, i.e.\ at
least $\frac{1}{4}\delta L^{d}$ indexes, satisfy all three (\ref{eq:bound-on-sup-grad-grad-f}),
(\ref{eq:bound-on-sup-g}) and (\ref{eq:bound-on-sup-grad-g}). For
such indexes $j$, applying Taylor's formula on $f$ and on $\nabla f$,
we get:
\begin{align}
\sup_{B\left(x_{j},\gamma L^{-1}\right)}\left|f\right| & \le\alpha+\beta\gamma+C\gamma^{2}\delta^{-1/2}\label{eq:bound-on-sup-f}\\
\sup_{B\left(x_{j},\gamma L^{-1}\right)}\left|\nabla f\right| & \le\left(\beta+C\gamma\delta^{-1/2}\right)L\label{eq:bound-on-sup-grad-f}
\end{align}
Here, $C$ is the hidden constant from (\ref{eq:bound-on-sup-grad-grad-f}).

By summing (\ref{eq:bound-on-sup-g}) + (\ref{eq:bound-on-sup-f})
and (\ref{eq:bound-on-sup-grad-g}) + (\ref{eq:bound-on-sup-grad-f}),
we get:
\begin{align*}
\sup_{B\left(x_{j},\gamma L^{-1}\right)}\left|f+g\right| & \le\underbrace{\alpha+\beta\gamma+C\delta^{-1/2}\left(\gamma^{2}+\tau\right)}_{A}\\
\sup_{B\left(x_{j},\gamma L^{-1}\right)}\left|\nabla f+\nabla g\right| & \le\underbrace{\left(\beta+C\delta^{-1/2}\left(\gamma+\tau\right)\right)}_{B}L
\end{align*}
Therefore, $f+g\in U$, where:
\[
U\coloneqq\left\{ h\in\mathcal{H}_{L}:\vol\left\{ x\in\td:\left|h\left(x\right)\right|\le A,\left|\nabla h\left(x\right)\right|\le BL\right\} \ge c\delta\gamma^{d}\right\} .
\]
For every fixed $x\in\td$, we have $\p\left\{ \left|f_{L}\left(x\right)\right|\le A,\left|\nabla f_{L}\left(x\right)\right|\le BL\right\} \ll AB^{d}$
due to the independence of the random variable $f_{L}\left(x\right)$
and the random vector $\nabla f_{L}\left(x\right)$ and the fact that
they have bounded densities in $\R$ and $\rd$, respectively. By
Fubini's theorem, we also have:
\[
\E\left\{ \vol\left\{ x\in\td:\left|h\left(x\right)\right|\le A,\left|\nabla h\left(x\right)\right|\le BL\right\} \right\} \ll AB^{d}.
\]
Thus, if we assume $AB^{d}\le C\delta\gamma^{d}$ for an appropriate
constant $C$, we get by Chebyshev's inequality that $\p\left(U\right)\le\frac{1}{2}$
(or any other constant smaller than $\frac{1}{2}$; since $\tilde E\subset U$,
we may decrease this constant to ensure $\p\left(E\right)\le\frac{1}{4}$).
Since furthermore $\p\left(\tilde E_{+\tau}\right)\le\frac{1}{2}$,
by Proposition \ref{prop:isoperimetric-fL} we get $\mathbb{P}\left(\widetilde{E}\right)\le C\e^{-c\tau^{2}\dim\hilbert}$.

To conclude, we have showed (\ref{eq:bound-on-P(E)}) under the following
assumption:
\begin{assumption}
$\left(\alpha+\beta\gamma+C\delta^{-1/2}\left(\gamma^{2}+\tau\right)\right)\left(\beta+C\delta^{-1/2}\left(\gamma+\tau\right)\right)^{d}\ll\delta\gamma^{d}$.
\end{assumption}

\subsection{Proof that \texorpdfstring{$N$}{N} is lower semi-continuous outside
of \texorpdfstring{$E$}{E}}

Next, we prove (\ref{eq:N-is-lsc-outside-E}) for $f\in\hilbert\setminus E$
and $g\in\hilbert$ with $\left\Vert g\right\Vert \le\rho$, where
$0<\rho<1$ is a new parameter depending only on $\varepsilon$ in
a way that will be determined later. Again, this proof will require
a few assumptions.

We may assume $f$ has a stable nodal set (this is a.s.\ true). For
each nodal component $\Gamma\in\Z\left(f\right)$, we pick one ball
$B_{j}$ that intersects it, and call it \emph{the intersecting ball
of $\Gamma$}. Now, assume three conditions: \emph{(a)} $\diam\Gamma\le RL^{-1}$;
\emph{(b)} $3B_{j}$ is a stable ball for $f$; and \emph{(c)} $\sup_{3B_{j}}\left|g\right|<\alpha$.
A component $\Gamma\in\Z\left(f\right)$ that satisfies these conditions
(with $B_{j}$ being its intersecting ball) is said to be a \emph{controllable
component}.

Since the radius of $B_{j}$ if $RL^{-1}$, we have $\Gamma\subset2B_{j}$,
thus $\Gamma$ is $RL^{-1}$-separated from the boundary of $3B_{j}$;
under an additional assumption that $R\ge\alpha/\beta$, we have $\Gamma_{+\alpha/\left(\beta L\right)}\subset3B_{j}$.
Since $3B_{j}$ is a stable ball for $f$, we have $\left|f\left(x\right)\right|>\alpha$
or $\left|\nabla f\left(x\right)\right|>\beta L$ for any $x\in3B_{j}$,
and also $\left|g\left(x\right)\right|<\alpha$. By Proposition \ref{prop:perturbation-in-U},
$\Gamma$ generates a component $\tilde{\Gamma}\in\Z\left(f+g\right)$
that is also contained in $3B_{j}$, and the mapping $\Gamma\mapsto\tilde{\Gamma}$
is injective for all $\Gamma$ satisfying condition (a) above with
intersecting ball $B_{j}$. Thus, all controllable components $\Gamma$
generate different components $\tilde{\Gamma}$ under perturbation
by $g$, and their number does not decrease.

Thus, to prove (\ref{eq:N-is-lsc-outside-E}) it remains to show that
the number of components that are not controllable can be bounded
by $\epsilon L^{d}$.

First, by Proposition \ref{prop:trig-polynomials-nodal-set-result},
the number of components $\Gamma$ with $\diam\Gamma>RL^{-1}$ is
a.s.\ $O\left(L^{d}R^{-1}\right)$, and we get the required bound
with an additional assumption: $R^{-1}\ll\epsilon$.

Second, suppose $\diam\Gamma\le RL^{-1}$ (thus $\Gamma\subset2B_{j}$),
but $3B_{j}$ is an unstable ball for $f$. The number of components
$\Gamma$ that can fit into $2B_{j}$ is at most $O\left(R^{d}\right)$:
If the radius of $2B_{j}$ is greater than half, then this is trivial
(as there are no more than $O\left(L^{d}\right)$ components in general);
otherwise, by Proposition \ref{prop:nodal-components-similar-to-nodal-domains}
it suffices to bound the number of nodal domains, which has the required
bound by Proposition \ref{prop:nodal-domain-area}. Since $f\notin E$,
there are at most $\delta L^{d}$ unstable balls, so there are $O\left(\delta L^{d}R^{d}\right)$
components of this type, which has the required bound with an additional
assumption: $\delta R^{d}\ll\epsilon$.

Finally, suppose $k$ of the balls $3B_{j}$ satisfy $\sup_{3B_{j}}\left|g\right|\ge\alpha$.
For each such ball, fix $x_{j}\in3B_{j}$ for which $\left|g\left(x_{j}\right)\right|\ge\alpha$.
By Proposition \ref{prop:local-bounds}, we have:
\[
\alpha^{2}\le\left|g\left(x_{j}\right)\right|^{2}\ll L^{d}\Int[B\left(x_{j},2L^{-1}\right)]{\left|g\left(x\right)\right|^{2}}x.
\]
Since a constant proportion of the balls $B\left(x_{j},2L^{-1}\right)$
may be dropped leaving a disjoint collection of balls, we get by summing
over the remaining balls:
\[
k\ll L^{d}\alpha^{-2}\rho^{2}.
\]

We have seen that at most $O\left(R^{d}\right)$ components have $B_{j}$
as their intersecting ball, so the number of components for which
the intersecting ball satisfies $\sup_{3B_{j}}\left|g\right|\ge\alpha$
is at most $O\left(L^{d}R^{d}\alpha^{-2}\rho^{2}\right)$, which has
the required bound with an additional assumption: $R^{d}\alpha^{-2}\rho^{2}\ll\epsilon$.

To summarize, (\ref{eq:N-is-lsc-outside-E}) is proven assuming the
following assumptions:
\begin{assumption}
$R\ge\alpha/\beta$.
\end{assumption}

\begin{assumption}
$R^{-1}\ll\epsilon$.
\end{assumption}

\begin{assumption}
$\delta R^{d}\ll\epsilon$.
\end{assumption}

\begin{assumption}
$R^{d}\alpha^{-2}\rho^{2}\ll\epsilon$.
\end{assumption}

\subsection{Choice of parameters}

It remains to choose values for the small parameters $\alpha,\beta,\delta,\gamma,\tau,\rho$
and large parameter $R$ that were used in the previous three subsections;
firstly, to satisfy the five assumptions that arose from the proofs,
and secondly, to maximize the value $c\left(\varepsilon\right)\simeq\min\left\{ \rho^{2},\tau^{2}\right\} $
that appears in Theorem \ref{thm:exponential-concentration}.

The conditions to satisfy are asymptotic inequalities, so we express
each parameter asymptotically as a power of $\varepsilon$: $\alpha\simeq\varepsilon^{a}$,
$\beta\simeq\epsilon^{b}$, $\delta\simeq\epsilon^{2k}$, $\gamma\simeq\epsilon^{g}$,
$\tau\simeq\epsilon^{t}$, $\rho\simeq\epsilon^{h}$ and $R\simeq\epsilon^{-r}$.
Each of $a,b,k,g,t,h,r$ must be a positive real number, and the five
asymptotic inequalities above can be expressed as the following constraints
on the exponents:
\begin{align}
2k+dg & \le\min\left\{ a,b+g,2g-k,t-k\right\} +d\cdot\min\left\{ b,g-k,t-k\right\} \label{eq:ineq1}\\
b & \le a+r\label{eq:ineq2}\\
r & \ge1\label{eq:ineq3}\\
2k & \ge1+rd\label{eq:ineq4}\\
2h & \ge1+2a+rd\label{eq:ineq5}
\end{align}
That is, we want to find positive values that satisfy the above, and
\emph{minimize} $\max\left\{ h,t\right\} $.

First, note that $h$ is minimized simply by changing (\ref{eq:ineq5}),
the only inequality it appears in, to an equation, and we get $2h=1+2a+rd$.

Next, observe that in (\ref{eq:ineq1}), any choice of two minimal
values on the right hand side creates a simple inequality. There are
12 such inequalities, and the inequality (\ref{eq:ineq1}) is equivalent
to all 12 occurring simultaneously. Of those 12 inequalities, 6 are
constraints on $t$ (in the other 6, $t$ does not appear). In the
6 constraints on $t$, we find that increasing $b$, decreasing $k$
or decreasing $g$ either weakens or doesn't change the constraint
on minimizing $t$. Therefore, $b$ must be maximized and $k$ and
$g$ must be minimized. To maximize $b$, we may change the inequality
(\ref{eq:ineq2}) into an equation $b=a+r$, and to maximize $k$,
(\ref{eq:ineq4}) gives $2k=1+rd$.

Since $b+g=a+r+g>a$, we may drop the term $b+g$ from the first minimum
in the right hand side of (\ref{eq:ineq1}). This leaves only one
inequality that bounds $g$ from below - the one obtained by choosing
$2g-k$ as the first minimum and $g-k$ as the second. From this,
we get $2g=\left(3+d\right)k$.

Finally, decreasing $r$ decreases $h$ and does not affect any constraint
on $t$, so $r$ must be minimized. The only remaining constraint
on $r$ is (\ref{eq:ineq4}), so we get $r=1$.

Collecting and simplifying all of the above equations, we have:
\begin{align*}
b & =a+1\\
2k & =d+1\\
4g & =\left(d+1\right)\left(d+3\right)\\
2h & =d+1+2a\\
r & =1
\end{align*}

Our target is to minimize $\max\left\{ h,t\right\} $. Decreasing
$a$ decreases $h$, but tightens the constraints in (\ref{eq:ineq1})
on minimizing $t$. Therefore, we set $t=h$ and find minimal value
of $a$ by the constraint (\ref{eq:ineq1}), which may now be written
as:
\[
\left(d+1\right)\left(d^{2}+3d+4\right)\le\min\left\{ 4a,2\left(d+1\right)\left(d+2\right)\right\} +d\cdot\min\left\{ 4a,\left(d+1\right)^{2}\right\} .
\]

The minimal solution to this inequality is:
\[
2a=\left(d+1\right)\left(d+2\right),
\]
and we can see that all constraints are satisfied, with $2h=2t=\left(d+2\right)^{2}-1$;
so, $c\left(\epsilon\right)\simeq\epsilon^{\left(d+2\right)^{2}-1}$.

\appendix

\section{\label{sec:appendix-equidistribution}Equidistribution of lattice
points on spheres}

For completeness, we present the known results on equidistribution
of lattice points on spheres (as in Definition \ref{def:admissible-sequence})
in various dimensions $d$.

In dimension $d\ge5$, it was shown in \cite{Pommerenke} that we
have equidistribution unconditionally, and any sequence $L\to\infty$
(with $L^{2}\in\z$) is admissible.

When $d=4$, any natural number is a sum of four squares, but there
are arbitrarily large values of $L$ with few representations, and
$\dim\hilbert$ may remain bounded as $L\to\infty$. Requiring $\dim\hilbert\to\infty$
(for instance, by bounding the multiplicity of the prime $2$ in $L^{2}$)
yields equidistribution, again by \cite{Pommerenke} (see also \cite{Malyshev-four-dimensional-equidistribution}).

In dimension $d=3$, a congruence condition $L^{2}\not\equiv0,4,7\pmod8$
ensures $\dim\hilbert\to\infty$ (thus, bounding the multiplicity
of the prime $2$ in $L^{2}$ also ensures this). The question of
whether $\dim\hilbert\to\infty$ implies equidistribution is very
difficult, and was answered affirmatively in \cite{Golubeva-Fomenko}
and \cite{Duke} following a breakthrough by Iwaniec \cite{Iwaniec-Fourier-coefficients-of-modular-forms-of-half-integral-weight}.
See also \cite{Duke-Introduction}.

The equidistribution question in dimension $d=2$ is trickier than
higher dimensions. Any condition that simply ensures $\dim\hilbert\to\infty$
must strongly depend on the prime decomposition of $L^{2}$, as can
be concluded from Gauss's classical formula for the number of representations
of integers as sums of two squares. Furthermore, it turns out that
the condition $\dim\hilbert\to\infty$ is not strong enough to ensure
equidistribution, and the limit measure may even be a sum of 4 atoms,
as shown in \cite{Cilleruelo-1993}. On the positive side, equidistribution
can be proven for a subsequence of relative density $1$ in the sequence
of sums of two squares, as shown in \cite{Katai-Kornyei} and \cite{Erdos-Hall}
(see also \cite{Fainsilber-Kurlberg-Wennberg}).

In case that $d=2$ and the integer points on the circle accumulate
according to a non-uniform limiting measure, if it has no atoms, then
the result of Theorem \ref{thm:asymptotic-law} still holds. However,
the value of $\nu$ depends on the limiting measure. This situation
is further investigated in \cite{Kurlberg-Wigman-Non-universality};
see also \cite{Buckley-Wigman-On-the-number-of-nodal-domains-of-toral-eigenfunctions}.

Regarding the number of lattice points, when $d\ge4$ we have $\dim\hilbert\gg L^{d-2}$
(under the correct assumptions when $d=4$) by the classical Hardy-Littlewood
circle method - see, for instance, \cite[Chapter 12]{Grosswald-book-Representations-of-integers-as-sums-of-squares}.
When $d=3$, we have (under the correct assumptions) $\dim\hilbert\gg c\left(\delta\right)L^{1-\delta}$
for any fixed $0<\delta<1$, due to Siegel - see \cite[Chapter 21]{Davenport-book-Multiplicative-number-theory}.
Finally, when $d=2$, we may find equidistributed subsequences of
relative density $1$ that satisfy $\dim\hilbert\gg\left(\log L\right)^{\gamma}$
for any fixed $0<\gamma<\frac{1}{2}\log\frac{\pi}{2}\approx0.226$,
by \cite{Erdos-Hall}.

\section{\label{sec:appendix-additional-proofs}Additional proofs}

\subsection{Stability of nodal sets - the ``shell lemma'' and Propositions
\ref{prop:perturbation-in-U} and \ref{prop:smooth-perturbation}}

Let $\alpha,\beta>0$ and let $U$ be an open subset of $\td$. Let
$f\colon U\to\R$ be a smooth function such that $\left|f\left(x\right)\right|>\alpha$
or $\left|\nabla f\left(x\right)\right|>\beta$ for any $x\in U$.

The ``shell lemma'', given below (cf.\ \cite[Claim 4.2]{Nazarov-Sodin-spherical-harmonics}),
shows that each connected component of $\left\{ x\in U:f\left(x\right)=0\right\} $
which is not too close to $\partial U$ is contained in a ``shell'',
which is a connected component of $\left\{ x\in U:\left|f\left(x\right)\right|<\alpha\right\} $,
and the shells satisfy certain properties. Proposition \ref{prop:perturbation-in-U}
follows immediately from this lemma, and the proof of Proposition
\ref{prop:smooth-perturbation}, given below, also follows from it.

Before presenting the lemma and its proof, we construct a vector field
whose integral curves are used in the proof. See, for instance, \cite[Chapter 9]{Lee-book-Introduction-to-Smooth-Manifolds-second-ed}
for the necessary background in the theory of integral curves and
flows on smooth manifolds.

Let $M\coloneqq\left\{ x\in U:\left|\nabla f\left(x\right)\right|>\beta\right\} $.
On the open submanifold $M$, define the following vector field:
\[
V\coloneqq\frac{\nabla f}{\left|\nabla f\right|^{2}}.
\]
For any $p\in M$, let $\theta^{\left(p\right)}\colon\D^{\left(p\right)}\to M$
be the integral curve starting at $p$ with respect to the vector
field $V$ (where $\D^{\left(p\right)}$ is an open interval containing
zero, the curve's maximal domain). It is easy to see that this integral
curve has the following three properties (see Figure \ref{fig:integral-curve}):
\begin{enumerate}
\item $f\left(\theta^{\left(p\right)}\left(t\right)\right)=f\left(p\right)+t$
for any $t\in\D^{\left(p\right)}$ (because the left side has constant
derivative $1$).
\item $\dist\left(p,\theta^{\left(p\right)}\left(t\right)\right)\le t/\beta$
(because $\left|V\right|\le\beta^{-1}$).
\item If $p$ is such that $f\left(p\right)=0$ and $\overline{B}\left(p,\alpha/\beta\right)\subset U$,
then $\left[-\alpha,\alpha\right]\subset\D^{\left(p\right)}$.
\end{enumerate}
\noindent 
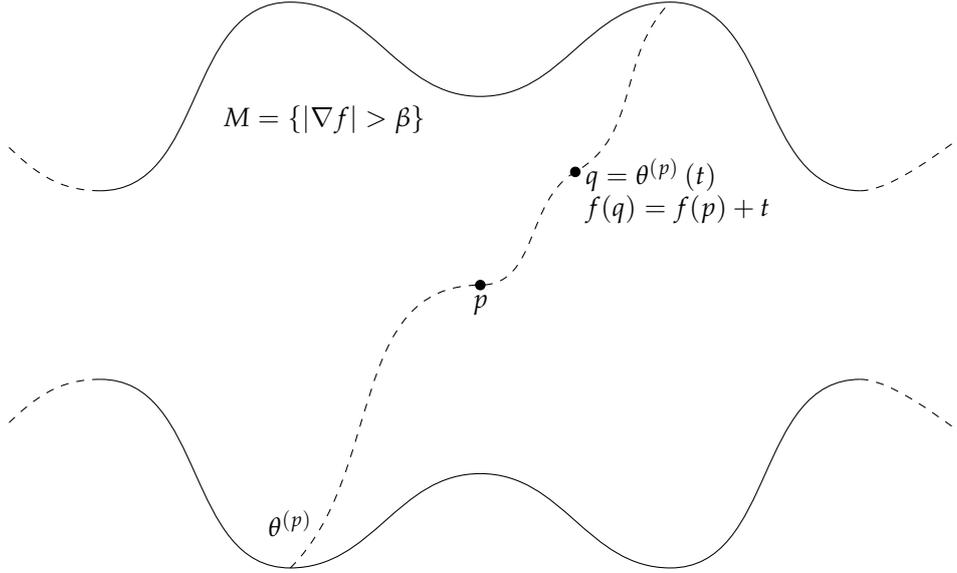
\begin{figure}[h]
\centering{}\begin{tikzpicture}[scale=2.5]
\coordinate (outer1) at (0,0);
\coordinate (outer2) at (1, 1.5);
\coordinate (outer3) at (2, 1);
\coordinate (outer4) at (3, 1.5);
\coordinate (outer5) at (4, 0);
\coordinate (outer6) at (3, -1.5);
\coordinate (outer7) at (2, -1);
\coordinate (outer8) at (1, -1.5);

\coordinate (top0) at (-0.5, 0.75);
\coordinate (top1) at (0,0.5);
\coordinate (top2) at (1, 1.5);
\coordinate (top3) at (2, 1);
\coordinate (top4) at (3, 1.5);
\coordinate (top5) at (4, 0.5);
\coordinate (top6) at (4.5, 0.75);

\coordinate (bot0) at (-0.5, -0.75);
\coordinate (bot1) at (0,-0.5);
\coordinate (bot2) at (1, -1.5);
\coordinate (bot3) at (2, -1);
\coordinate (bot4) at (3, -1.5);
\coordinate (bot5) at (4, -0.5);
\coordinate (bot6) at (4.5, -0.75);

\coordinate (P) at (2, 0);
\coordinate (Q) at (2.5, 0.6);
\coordinate (path1) at (outer8);
\coordinate (path2) at (P);
\coordinate (path3) at (Q);
\coordinate (path4) at (outer4);

\draw (top1) to [out=0, in=180] (top2) to [out=0, in=180] (top3) to [out=0, in=180] (top4) to [out=0, in=180] (top5);
\draw [dashed] (top1) to [out=180, in=-45] (top0);
\draw [dashed] (top5) to [out=0, in=45] (top6);

\draw (bot1) to [out=0, in=180] (bot2) to [out=0, in=180] (bot3) to [out=0, in=180] (bot4) to [out=0, in=180] (bot5);
\draw [dashed] (bot1) to [out=180, in=45] (bot0);
\draw [dashed] (bot5) to [out=0, in=-45] (bot6);

\node [below left, xshift=-6mm] at (top3) {$M=\left\{\left|\nabla f\right|>\beta\right\}$};

\draw [dashed] (path1) to [out=45, in=180] (path2) to [out=0, in=180+30] (path3) to [out=30, in=180+45] (path4);

\draw [fill] (P) circle [radius=0.025];
\node [below] at (P) {$p$};

\draw [fill] (Q) circle [radius=0.025];
\node [anchor=north west,yshift=3mm,xshift=-2mm] at (Q) {\begin{tabular}{l} $q=\theta^{\left( p \right)}\left( t \right)$ \\ $f(q)=f(p)+t$ \end{tabular}};

\node [anchor=south,yshift=3mm] at (path1) {$\theta^{\left( p \right)}$};

\end{tikzpicture}\caption{\label{fig:integral-curve}Starting at a point $p\in M$, the parameterization
of the integral curve $\theta^{\left(p\right)}$ corresponds to the
change in value of $f$. The distance between $p$ and $q=\theta^{\left(p\right)}\left(t\right)$
is at most $t/\beta$.}
\end{figure}

Following the above construction, we now formulate and prove the shell
lemma. The shells described by the lemma are illustrated in Figure
\ref{fig:shell-lemma}.
\begin{lem}[the shell lemma]
\label{lem:shell}$ $
\selectlanguage{british}%
\begin{enumerate}[labelindent=\parindent,leftmargin=2em,label=({\roman*})]
\item \foreignlanguage{american}{\label{enu:shell-prop-1}Each connected
component $\Gamma$ of $\left\{ x\in U:f\left(x\right)=0\right\} $
that satisfies $\Gamma_{+\alpha/\beta}\subset U$ is contained in
an open, connected ``shell'' $S_{\Gamma}\subset\left\{ x\in U:\left|f\left(x\right)\right|<\alpha\right\} $
whose boundary consists of two components, with $f=\alpha$ on one
and $f=-\alpha$ on the other.}
\selectlanguage{american}%
\item \label{enu:shell-prop-2}$S_{\Gamma}\subset\Gamma_{+\alpha/\beta}$,
and for any point $p\in\Gamma$, the ball $\overline{B}\left(p,\alpha/\beta\right)$
contains a path through $p$ from one boundary component of $S_{\Gamma}$
to the other.
\item \label{enu:shell-prop-3}Given two such components $\Gamma_{1}\ne\Gamma_{2}$,
the shells $S_{\Gamma_{1}},S_{\Gamma_{2}}$ are disjoint.
\item \label{enu:shell-prop-4}$S_{\Gamma}$ may be decomposed as $\Gamma\cup S_{\Gamma}^{+}\cup S_{\Gamma}^{-}$,
where $S_{\Gamma}^{+}=\left\{ x\in S_{\Gamma}:f\left(x\right)>0\right\} $,
$S_{\Gamma}^{-}=\left\{ x\in S_{\Gamma}:f\left(x\right)<0\right\} $.
In this decomposition, $S_{\Gamma}^{+}$ and $S_{\Gamma}^{-}$ are
connected open sets.
\item \label{enu:shell-prop-5}In case $U=\td$, the shells $\left\{ S_{\Gamma}\right\} _{\Gamma\in\Z\left(f\right)}$
are precisely the connected components of $\left\{ x\in\td:\left|f\left(x\right)\right|<\alpha\right\} $.
\end{enumerate}
\end{lem}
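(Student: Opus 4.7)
The plan is to use the flow of the vector field $V$ to parameterize a full neighborhood of $\Gamma$ by pairs $(p,t)\in\Gamma\times(-\alpha,\alpha)$, where $p\in\Gamma$ picks a base point and $t$ records the value of $f$. Concretely, I define the flow map $\Phi\colon\Gamma\times(-\alpha,\alpha)\to U$ by $\Phi(p,t)\coloneqq\theta^{(p)}(t)$ and set $S_{\Gamma}\coloneqq\Phi(\Gamma\times(-\alpha,\alpha))$. Two preliminary checks justify this definition. First, $\Gamma\subset M$, since $f\equiv 0<\alpha$ on $\Gamma$ forces $|\nabla f|>\beta$ there by the hypothesis on $f$, so $V$ is defined on $\Gamma$. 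Second, by property~(3) of the integral curves together with $\Gamma_{+\alpha/\beta}\subset U$, we have $(-\alpha,\alpha)\subset \mathcal{D}^{(p)}$ for every $p\in\Gamma$. Along each orbit, property~(1) gives $f(\Phi(p,t))=t$, so the image lies in $\{|f|<\alpha\}$ and hence inside $M$, and property~(2) gives $\dist(p,\Phi(p,t))\le|t|/\beta<\alpha/\beta$, so $S_{\Gamma}\subset\Gamma_{+\alpha/\beta}$.

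To obtain parts \ref{enu:shell-prop-1}, \ref{enu:shell-prop-2}, and \ref{enu:shell-prop-4}, I would show that $\Phi$ is a diffeomorphism onto $S_{\Gamma}$. For local invertibility at $(p,0)$, the image of $d\Phi$ decomposes as the inclusion $T_{p}\Gamma\hookrightarrow T_{p}U$ (which lies in $\ker df_{p}$) together with $\partial_{t}\mapsto V(p)$ (which satisfies $df_{p}(V(p))=1$), and these subspaces span $T_{p}U$; composing with the flow carries this local invertibility to all $t\in(-\alpha,\alpha)$. Global injectivity follows from uniqueness of integral curves: if $\Phi(p_{1},t_{1})=\Phi(p_{2},t_{2})$ then $t_{1}=t_{2}=:t$ by reading off $f$, and applying the reverse flow at the common image yields $p_{1}=p_{2}$. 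Consequently $S_{\Gamma}$ is open, connected (as a continuous image of a connected product), and contained in $\{|f|<\alpha\}\cap\Gamma_{+\alpha/\beta}$, which together with the boundary identification discussed below proves \ref{enu:shell-prop-1} and \ref{enu:shell-prop-2}. For \ref{enu:shell-prop-4}, the sets $S_{\Gamma}^{\pm}\coloneqq\Phi(\Gamma\times(0,\pm\alpha))$ are open and connected by the same argument and evidently equal $S_{\Gamma}\cap\{\pm f>0\}$. The path through $p$ required in \ref{enu:shell-prop-2} is simply $t\mapsto\theta^{(p)}(t)$ for $t\in[-\alpha,\alpha]$, which lies in $\overline{B}(p,\alpha/\beta)$ by property~(2) and meets both level sets $\{f=\pm\alpha\}$.

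Parts \ref{enu:shell-prop-3} and \ref{enu:shell-prop-5} rest on the same uniqueness principle. For disjointness, if $x\in S_{\Gamma_{1}}\cap S_{\Gamma_{2}}$ and $x=\Phi(p_{i},t)$ with $p_{i}\in\Gamma_{i}$ and common $t=f(x)$, the reverse flow at $x$ forces $p_{1}=p_{2}\in\Gamma_{1}\cap\Gamma_{2}$, so $\Gamma_{1}=\Gamma_{2}$. For \ref{enu:shell-prop-5} with $U=\td$, take any $x$ with $|f(x)|<\alpha$; along $\theta^{(x)}$ the function $f$ is affine, so the curve stays within $\{|f|<\alpha\}\subset M$ for parameters $|s|\le|f(x)|$, and $p\coloneqq\theta^{(x)}(-f(x))\in Z(f)$. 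With $U=\td$, the condition $\Gamma_{+\alpha/\beta}\subset U$ is vacuous for the nodal component $\Gamma$ containing $p$, and $x=\Phi(p,f(x))\in S_{\Gamma}$. Conversely, each $S_{\Gamma}$ exhausts its connected component in $\{|f|<\alpha\}$, because any interior boundary point would lie in $M$ and admit a local flow extension inside $S_{\Gamma}$, a contradiction.

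The main obstacle is establishing that $\Phi$ is genuinely a local diffeomorphism, i.e.\ that $S_{\Gamma}$ is open and injectively parameterized. Everything else follows rather mechanically from properties~(1)--(3) of the flow, but the local product structure step requires carefully combining the transversality $df(V)=1$ at $\Gamma$ with the standard fact that the flow of $V$ on $M$ is a local diffeomorphism, applied uniformly in $t\in(-\alpha,\alpha)$. A closely related subtlety is the identification of $\partial S_{\Gamma}\cap U$ with a subset of $\{f=\alpha\}\cup\{f=-\alpha\}$ needed for \ref{enu:shell-prop-1} and \ref{enu:shell-prop-5}; this is handled by the same extension argument — any boundary point with $|f|<\alpha$ could be continued by $\theta^{(\cdot)}$ back into $S_{\Gamma}$, contradicting its being a boundary point.
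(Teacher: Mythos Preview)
Your proposal is correct and follows essentially the same route as the paper: both define $S_{\Gamma}$ as the image of $\Gamma\times(-\alpha,\alpha)$ under the flow map $\Phi(p,t)=\theta^{(p)}(t)$, use property~(1) to read off $t=f(\Phi(p,t))$, and deduce all five items from the resulting product structure. Two minor differences are worth noting. First, where you verify that $\Phi$ is a diffeomorphism by computing $d\Phi$ at $(p,0)$ and propagating via the flow, the paper simply writes down the explicit inverse $q\mapsto\bigl(f(q),\theta^{(q)}(-f(q))\bigr)$; this bypasses the tangent-space computation entirely and makes the ``main obstacle'' you flag disappear. Second, for part~\ref{enu:shell-prop-5} the paper argues by contradiction---if some component $S$ of $\{|f|<\alpha\}$ were not a shell, then $f$ would be constant ($\pm\alpha$) on $\partial S$, forcing an interior critical point with $|f|<\alpha$, which is impossible---whereas you flow an arbitrary point of $\{|f|<\alpha\}$ back to $Z(f)$. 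Both arguments are short and valid; yours is perhaps more in keeping with the flow-based spirit of the rest of the proof.
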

\noindent 
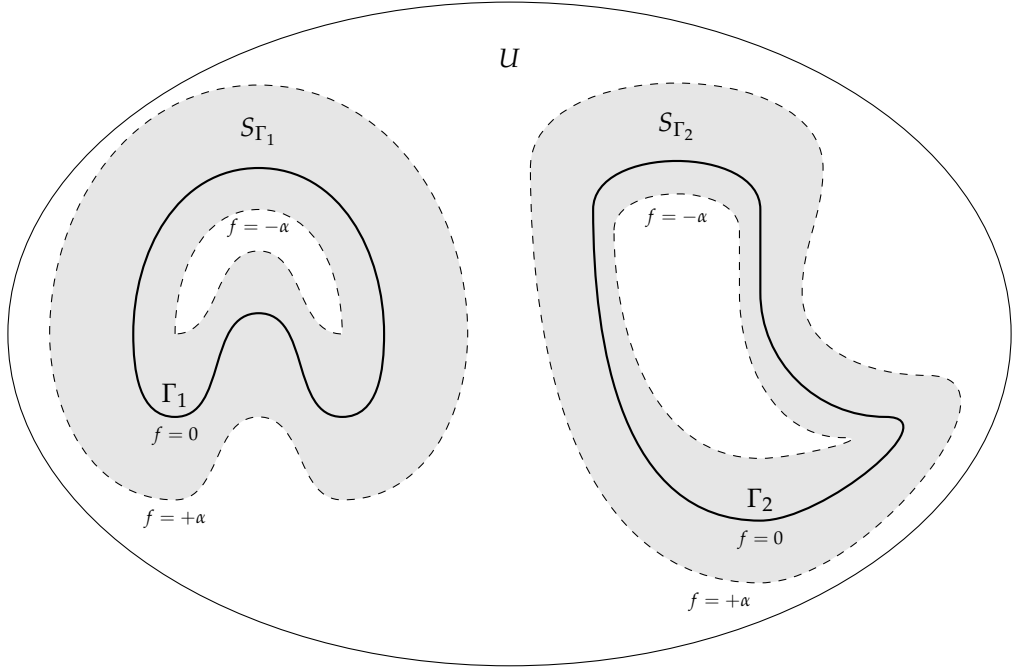
\begin{figure}[h]
\begin{centering}
\definecolor{fillgray}{gray}{0.9}
\begin{tikzpicture}[scale=2.2]

\coordinate (origin) at (0,0);
\coordinate (circle_top) at (0,2);

\coordinate (a1) at (-2.25, 0);
\coordinate (a2) at (-1.5,1);
\coordinate (a3) at (-0.75,0);
\coordinate (a4) at (-1,-0.5);
\coordinate (a5) at (-1.5,0.125);
\coordinate (a6) at (-2,-0.5);

\coordinate (ao1) at (-2.75, 0);
\coordinate (ao2) at (-1.5,1.5);
\coordinate (ao3) at (-0.25,0);
\coordinate (ao4) at (-1,-1);
\coordinate (ao5) at (-1.5,-0.5);
\coordinate (ao6) at (-2,-1);

\coordinate (ai1) at (-2, 0);
\coordinate (ai2) at (-1.5,0.75);
\coordinate (ai3) at (-1,0);
\coordinate (ai4) at (-1,0);
\coordinate (ai5) at (-1.5,0.5);
\coordinate (ai6) at (-2,0);

\coordinate (b1) at (0.5,0.75);
\coordinate (b2) at (1.5,0.75);
\coordinate (b3) at (1.5,0.25);
\coordinate (b4) at (2.25,-0.5);
\coordinate (b5) at (1.5,-1.125);

\coordinate (bo1) at (0.125,1);
\coordinate (bo2) at (1.875,1);
\coordinate (bo1_bo2_middle) at (1,1.25); 
\coordinate (bo3) at (1.75,0.25);
\coordinate (bo4) at (2.5,-0.25);
\coordinate (bo5) at (1.5,-1.5);

\coordinate (bi1) at (0.625,0.625);
\coordinate (bi2) at (1.375,0.625);
\coordinate (bi1_bi2_middle) at (1,0.7); 
\coordinate (bi3) at (1.375,0.25);
\coordinate (bi4) at (2,-0.625);
\coordinate (bi5) at (1.5,-0.75);

\draw (origin) circle [x radius=3, y radius = 2];
\node [below, yshift=-5mm] at (circle_top) {$U$};

\draw [dashed, fill=fillgray] (ao1) to [out=90, in=180] (ao2) to [out=0, in=90] (ao3) to [out=270, in=0] (ao4) to [out=180, in=0] (ao5) to [out=180, in=0] (ao6) to [out=180, in=270] (ao1);
\draw [dashed, fill=white] (ai1) to [out=90, in=180] (ai2) to [out=0, in=90] (ai3) to [out=270, in=0] (ai4) to [out=180, in=0] (ai5) to [out=180, in=0] (ai6) to [out=180, in=270] (ai1);
\draw [thick] (a1) to [out=90, in=180] (a2) to [out=0, in=90] (a3) to [out=270, in=0] (a4) to [out=180, in=0] (a5) to [out=180, in=0] (a6) to [out=180, in=270] (a1);

\node [above] at (a6) {$\Gamma_1$};
\node [below, yshift=-3mm] at (ao2) {$S_{\Gamma_1}$};
\node [below] at (a6) {\scriptsize $f=0$};
\node [below] at (ao6) {\scriptsize $f=+\alpha$};
\node [below] at (ai2) {\scriptsize $f=-\alpha$};

\draw [dashed, fill=fillgray] (bo1) to [out=90, in=90] (bo2) to [out=270, in=90] (bo3) to [out=270, in=180] (bo4) to [out=0, in=0] (bo5) to [out=180, in=270] (bo1);
\draw [dashed, fill=white] (bi1) to [out=90, in=90] (bi2) to [out=270, in=90] (bi3) to [out=270, in=180] (bi4) to [out=0, in=0] (bi5) to [out=180, in=270] (bi1);
\draw [thick] (b1) to [out=90, in=90] (b2) to [out=270, in=90] (b3) to [out=270, in=180] (b4) to [out=0, in=0] (b5) to [out=180, in=270] (b1);

\node [above] at (b5) {$\Gamma_2$};
\node at (bo1_bo2_middle) {$S_{\Gamma_2}$};
\node [below] at (b5) {\scriptsize $f=0$};
\node [below left] at (bo5) {\scriptsize $f=+\alpha$};
\node at (bi1_bi2_middle) {\scriptsize $f=-\alpha$};

\end{tikzpicture}
\par\end{centering}

\centering{}\caption{\label{fig:shell-lemma}Illustration of two connected components of
$\left\{ x\in U:f\left(x\right)=0\right\} $ and their corresponding
shells.}
\end{figure}

\begin{proof}
Let $\Gamma$ be a connected component of $\left\{ x\in U:f\left(x\right)=0\right\} $
that satisfies $\Gamma_{+\alpha/\beta}\subset U$. Define:
\[
S_{\Gamma}\coloneqq\left\{ \theta^{\left(x_{0}\right)}\left(t\right):\left|t\right|<\alpha,x_{0}\in\Gamma\right\} .
\]

For any $p\in S_{\Gamma}$, if $p=\theta^{\left(x_{0}\right)}\left(t\right)$,
then $t=f\left(p\right)$ and $x_{0}=\theta^{\left(p\right)}\left(-t\right)$.
Therefore, the smooth map $\left(-\alpha,\alpha\right)\times\Gamma\to S_{\Gamma}$
given by $\left(t,x_{0}\right)\mapsto\theta^{\left(x_{0}\right)}\left(t\right)$
has a smooth inverse, and we have that $\left(-\alpha,\alpha\right)\times\Gamma$
is diffeomorphic to $S_{\Gamma}$. Therefore $S_{\Gamma}$ is open,
connected, and has exactly two boundary components, both diffeomorphic
to $\Gamma$, with $f=\alpha$ on one and $f=-\alpha$ on the other,
proving \ref{enu:shell-prop-1}. The path described in \ref{enu:shell-prop-2}
is given by $\theta^{\left(p\right)}\left(\left[-\alpha,\alpha\right]\right)$.
For \ref{enu:shell-prop-3}, note that for each $p\in S_{\Gamma}$
we have $\theta^{\left(p\right)}\left(-f\left(p\right)\right)\in\Gamma$,
so $p$ cannot be simultaneously in $S_{\Gamma_{1}}$ and $S_{\Gamma_{2}}$.
\ref{enu:shell-prop-4} follows from the fact that $S_{\Gamma}^{+}$
and $S_{\Gamma}^{-}$ are continuous images of the connected sets
$\left(0,\alpha\right)\times\Gamma$ and $\left(-\alpha,0\right)\times\Gamma$,
respectively.

Finally, we prove \ref{enu:shell-prop-5}. Suppose $U=\td$; the shells
$S_{\Gamma}$ are among the connected components of $\left\{ x\in\td:\left|f\left(x\right)\right|<\alpha\right\} $,
and it remains to show that there is no other connected component
$S$. We have $f=\pm\alpha$ on $\partial S$, but $f$ cannot be
constant on $\partial S$ or there would have to be a point inside
$S$ where $\nabla f=0$, a contradiction. Therefore there must be
some $\Gamma\in\Z\left(f\right)$ lying inside $S$, and we have $S=S_{\Gamma}$.
\end{proof}

\begin{proof}[Proof of Proposition \ref{prop:smooth-perturbation}]
By applying Proposition \ref{prop:perturbation-in-U} directly, we
get $N\left(f\right)\le N\left(f+g\right)$, and by applying Proposition
\ref{prop:perturbation-in-U} on the function $f+g$ with perturbation
$-g$, we get $N\left(f+g\right)\le N\left(f\right)$. Therefore,
the mapping $\Gamma\mapsto\tilde{\Gamma}$ given in Proposition \ref{prop:perturbation-in-U}
is a bijection, and each $\tilde{\Gamma}$ is the \emph{only} component
of $\Z\left(f+g\right)$ lying inside $S_{\Gamma}$. Let $p,q\in\Gamma$
be a pair of points realizing the diameter: $\diam\Gamma=\dist\left(p,q\right)$.
By property \ref{enu:shell-prop-2} in Lemma \ref{lem:shell}, the
ball $\overline{B}\left(p,\alpha/\beta\right)$ contains a point $\tilde p\in Z\left(f+g\right)$,
which must then belong to $\tilde{\Gamma}$. Similarly, the ball $\overline{B}\left(q,\alpha/\beta\right)$
contains a point $\tilde q\in\tilde{\Gamma}$. By the triangle inequality,
\[
\diam\Gamma=\dist\left(p,q\right)\le\dist\left(p,\tilde p\right)+\dist\left(\tilde p,\tilde q\right)+\dist\left(\tilde q,q\right)\le\frac{\alpha}{\beta}+\diam\tilde{\Gamma}+\frac{\alpha}{\beta}.\qedhere
\]

\end{proof}

\subsection{Counting nodal components vs.\ counting nodal domains - Proposition
\ref{prop:nodal-components-similar-to-nodal-domains}}

We prove Proposition \ref{prop:nodal-components-similar-to-nodal-domains}
using an elementary concept in singular homology theory - the Mayer-Vietoris
sequence (see, for instance, \cite[Section 2.2]{Hatcher-book-Algebraic-Topology}).
We denote by $H_{n}\left(X\right)$ the $n$th singular homology group
of the topological space $X$, and by $\cong$ an isomorphism of groups.
\begin{proof}[Proof of Proposition \ref{prop:nodal-components-similar-to-nodal-domains}]
Let $\alpha,\beta>0$ be such that $\left|f\left(x\right)\right|>\alpha$
or $\left|\nabla f\left(x\right)\right|>\beta$ for any $x\in\td$
(see Remark \ref{rem:about-nodal-stability}). Define two sets $A,B\subset\td$
by:
\begin{align*}
A & \coloneqq\left\{ x\in\td:f\left(x\right)\ne0\right\} \\
B & \coloneqq\left\{ x\in\td:\left|f\left(x\right)\right|<\alpha\right\} 
\end{align*}
$A$ and $B$ are both open, and $A\cup B=\td$. We count the connected
components of $A,B$ and $A\cap B$:
\begin{itemize}
\item The connected components of $A$ are precisely the nodal domains,
so $A$ has $r$ components.
\item The connected components of $B$ are precisely the shells $S_{\Gamma}$
defined in Lemma \ref{lem:shell}, which are in correspondence with
the nodal components, so $B$ has $k$ components.
\item By Lemma \ref{lem:shell} \ref{enu:shell-prop-4}, excluding $\Gamma$
from any shell $S_{\Gamma}$ leaves it with exactly two components,
so $A\cap B$ has $2k$ components.
\end{itemize}
Consider the last four terms of the Mayer-Vietoris sequence, and name
the nonzero maps $\phi_{1},\phi_{2},\phi_{3}$:
\[
\cdots\longrightarrow H_{1}\left(\td\right)\overset{\phi_{1}}{\longrightarrow}H_{0}\left(A\cap B\right)\overset{\phi_{2}}{\longrightarrow}H_{0}\left(A\right)\oplus H_{0}\left(B\right)\overset{\phi_{3}}{\longrightarrow}H_{0}\left(\td\right)\longrightarrow0.
\]
Written explicitly:
\[
\cdots\longrightarrow\zd\overset{\phi_{1}}{\longrightarrow}\z^{2k}\overset{\phi_{2}}{\longrightarrow}\z^{r+k}\overset{\phi_{3}}{\longrightarrow}\z\longrightarrow0.
\]
$\phi_{1},\phi_{2},\phi_{3}$ are group homomorphisms, and they may
be extended naturally to linear maps between $\q$-vector spaces,
so the rank-nullity theorem applies and we get:
\begin{equation}
\begin{aligned}d & =\rank\img\phi_{1}+\rank\ker\phi_{1}\\
2k & =\rank\img\phi_{2}+\rank\ker\phi_{2}\\
r+k & =\rank\img\phi_{3}+\rank\ker\phi_{3}
\end{aligned}
\label{eq:rank-nullity}
\end{equation}
By the exactness of the Mayer-Vietoris sequence:
\begin{equation}
\begin{aligned}\img\phi_{1} & =\ker\phi_{2}\\
\img\phi_{2} & =\ker\phi_{3}\\
\img\phi_{3} & =\z
\end{aligned}
\label{eq:by-exactness}
\end{equation}
Plugging equations (\ref{eq:by-exactness}) into equations (\ref{eq:rank-nullity}),
we get:
\[
\rank\ker\phi_{1}=d-k+r-1.
\]
Since $\ker\phi_{1}$ is a subgroup of $\zd$, we have $0\le\rank\ker\phi_{1}\le d$,
leading to the required conclusion.

For the second part, let $\Gamma_{1},\ldots,\Gamma_{k'}\in\Z\left(f\right)$
be the nodal components of $f$ that lie completely inside some open
ball $U$ with radius less than $\frac{1}{2}$. Suppose the set $A\coloneqq U\setminus\left(\Gamma_{1}\cup\dots\cup\Gamma_{k'}\right)$
has $s$ components. Exactly one touches the (connected) boundary
of $U$. The other $s-1$ have $f=0$ on their boundary, and each
of them must contain at least one nodal domain of $f$ that lies completely
inside $U$, so $s-1\le r'$.

We conclude by showing that $s=k'+1$. Let $\alpha,\beta>0$ be such
that $\left|f\left(x\right)\right|>\alpha$ or $\left|\nabla f\left(x\right)\right|>\beta$
for any $x\in\td$, and $\dist\left(\Gamma_{i},\partial U\right)\ge\alpha/\beta$
for any $1\le i\le k'$ (see Remark \ref{rem:about-nodal-stability}).
By Lemma \ref{lem:shell}, each $\Gamma_{i}$ is contained in a shell
$S_{\Gamma_{i}}\subset U$. Define $B\coloneqq S_{\Gamma_{1}}\cup\dots\cup S_{\Gamma_{k'}}$.
As in the first part, $A,B$ are open and $A\cup B=U$, $A$ has $s$
components, $B$ has $k'$ components, and $A\cap B$ has $2k'$ components.
$U$ is simply-connected (because its radius is less than $\frac{1}{2}$),
so the last four terms of the Mayer-Vietoris sequence form the short
exact sequence $0\longrightarrow\z^{2k'}\longrightarrow\z^{s+k'}\longrightarrow\z\longrightarrow0$,
and we get $s=k'+1$.
\end{proof}

\subsection{Local bounds on eigenfunctions - Proposition \ref{prop:local-bounds}}
\begin{proof}[Proof of Proposition \ref{prop:local-bounds}]
W.l.o.g.\ we assume that $x_{0}=0$, and also that $L=1$ - for
the latter, simply apply the result to $\tilde f\left(x\right)\coloneqq f\left(x/L\right)$.
Thus, $f$ satisfies $\Delta f+4\pi^{2}f=0$.

We utilize a trick to view $f$ as a harmonic function in one more
dimension. Define $u\colon\R^{d+1}\to\R$ by:
\[
u\left(x_{1},\ldots,x_{d},x_{d+1}\right)\coloneqq f\left(x_{1},\ldots,x_{d}\right)\cosh\left(2\pi x_{d+1}\right).
\]
Clearly, $u$ is harmonic and $u\left(0\right)=f\left(0\right)$,
$\frac{\partial u}{\partial x_{j}}\left(0\right)=\frac{\partial f}{\partial x_{j}}\left(0\right)$
and $\frac{\partial^{2}u}{\partial x_{i}\partial x_{j}}\left(0\right)=\frac{\partial^{2}f}{\partial x_{i}\partial x_{j}}\left(0\right)$
for $1\le i,j\le d$. Using the mean value property in the ball $B^{d+1}\left(0,r/\sqrt{d}\right)$,
we get:
\[
f\left(0\right)=C\underset{B^{d+1}\left(0,r/\sqrt{d}\right)}{\idotsint}f\left(x_{1},\ldots,x_{d}\right)\cosh\left(2\pi x_{d+1}\right)\,\mathrm{d}x_{1}\cdots\mathrm{d}x_{d+1},
\]
where $C$ is a positive constant depending only on $r,d$. Taking
the absolute value and using the inclusions
\[
B^{d+1}\left(0,\frac{r}{\sqrt{d}}\right)\subset\left[-\frac{r}{\sqrt{d}},\frac{r}{\sqrt{d}}\right]^{d+1}\qquad\text{and}\qquad\left[-\frac{r}{\sqrt{d}},\frac{r}{\sqrt{d}}\right]^{d}\subset B^{d}\left(0,r\right),
\]
we get:
\begin{align*}
\left|f\left(0\right)\right| & \ll\underset{\left[-r/\sqrt{d},r/\sqrt{d}\right]^{d+1}}{\idotsint}\left|f\left(x_{1},\ldots,x_{d}\right)\right|\left|\cosh\left(2\pi x_{d+1}\right)\right|\,\mathrm{d}x_{1}\cdots\mathrm{d}x_{d+1}\ll\Int[B^{d}\left(0,r\right)]{\left|f\left(x\right)\right|}x,
\end{align*}
where the implied constant in $\ll$ depends only on $r,d$. By the
Cauchy-Schwarz inequality, we get (\ref{eq:bound-function}).

Next, we turn to (\ref{eq:bound-gradient}) and (\ref{eq:bound-hessian}).
Note that for any $\left|x\right|<\frac{r}{2}$ in $\R^{d+1}$, we
have:
\[
u\left(x\right)=\left(\frac{r}{2}\right)^{d-1}\Int[\left|\zeta\right|=1]{u\left(\frac{1}{2}r\zeta\right)P\left(x,\frac{1}{2}r\zeta\right)}{\sigma_{d}\left(\zeta\right)},
\]
where ${\displaystyle P\left(x,y\right)\coloneqq\frac{\left|x\right|^{2}-\left|y\right|^{2}}{\left|x-y\right|^{d+1}}}$
is the $\left(d+1\right)$-dimensional Poisson kernel. Differentiating
under the integral sign with respect to $x_{j}$, we get:
\[
\frac{\partial f}{\partial x_{j}}\left(0\right)=\frac{\partial u}{\partial x_{j}}\left(0\right)=\left(\frac{r}{2}\right)^{d-1}\Int[\left|\zeta\right|=1]{u\left(\frac{1}{2}r\zeta\right)\frac{\partial P}{\partial x_{j}}\left(0,\frac{1}{2}r\zeta\right)}{\sigma_{d}\left(\zeta\right)}.
\]

The Poisson kernel is smooth when its two parameters are separated,
so $\zeta\mapsto\frac{\partial P}{\partial x_{j}}\left(0,\frac{1}{2}r\zeta\right)$
is a continuous function on $\left\{ \left|\zeta\right|=1\right\} $.
Its integral depends only on $r,d$. Therefore:
\[
\left|\frac{\partial f}{\partial x_{j}}\left(0\right)\right|\ll\sup_{\left|\zeta\right|=1}\left|u\left(\frac{1}{2}r\zeta\right)\right|\le\sup_{\left|x\right|\le r/2}\left|f\left(x\right)\right|\sup_{\left|t\right|\le r/2}\left|\cosh\left(2\pi t\right)\right|\ll\sup_{\left|x\right|\le r/2}\left|f\left(x\right)\right|.
\]
Using (\ref{eq:bound-function}) applied on $f$ at point $x$, this
gives:
\[
\left|\nabla f\left(0\right)\right|^{2}=\sum_{j=1}^{d}\left|\frac{\partial f}{\partial x_{j}}\left(0\right)\right|^{2}\ll\sup_{\left|x\right|\le r/2}\left|f\left(x\right)\right|^{2}\ll\sup_{\left|x\right|\le r/2}\Int[B^{d}\left(x,r/2\right)]{\left|f\left(y\right)\right|^{2}}y\le\Int[B^{d}\left(0,r\right)]{\left|f\left(y\right)\right|^{2}}y.
\]
This proves (\ref{eq:bound-gradient}). Similarly, applying (\ref{eq:bound-gradient})
on $\frac{\partial f}{\partial x_{j}}$ at point $0$ and then applying
(\ref{eq:bound-function}) on $\frac{\partial f}{\partial x_{j}}$
at point $x$, we get:
\[
\left|\nabla\frac{\partial f}{\partial x_{j}}\left(0\right)\right|^{2}\ll\Int[B^{d}\left(0,r/2\right)]{\left|\frac{\partial f}{\partial x_{j}}\left(x\right)\right|^{2}}x\ll\sup_{\left|x\right|\le r/2}\left|\frac{\partial f}{\partial x_{j}}\left(x\right)\right|^{2}\ll\Int[B^{d}\left(0,r\right)]{\left|f\left(y\right)\right|^{2}}y.
\]
Summing over $j$ gives (\ref{eq:bound-hessian}).
\end{proof}
\bibliographystyle{amsalpha-initials}
\phantomsection\addcontentsline{toc}{section}{\refname}\bibliography{Article}

\end{document}